%This is the AMSLaTex file of my notes iso.tex
%***********

\documentclass[11pt,reqno]{amsart}
\usepackage{graphicx}
\usepackage{amscd}
\usepackage{amsmath,amsthm}
\usepackage{amsfonts}
\usepackage{amssymb}
\usepackage{color}
\usepackage{hyperref}
\usepackage{tikz-cd}
\usepackage{float}
\usepackage{graphicx}
\usepackage{multirow}
\usepackage{geometry}
\geometry{left=3.3cm,right=3.0cm,top=3.3cm, bottom=3.3cm}

\DeclareMathOperator{\Span}{span}
\DeclareMathOperator{\diag}{diag}

\DeclareMathOperator{\Div}{div}
\DeclareMathOperator{\ord}{ord}
\DeclareMathOperator{\Gcd}{gcd} 

\DeclareMathOperator{\supp}{Supp}
\DeclareMathOperator{\Aut}{Aut}

\DeclareMathOperator{\mult}{mult}
\newenvironment{psmallmatrix}
  {\left(\begin{smallmatrix}}
  {\end{smallmatrix}\right)}

\pagestyle{plain}

\newtheorem*{introtheorem}{Theorem}
\newtheorem{theorem}{Theorem}[section]
\newtheorem{lemma}{Lemma}[section]
\newtheorem{sublemma}{Sublemma}[section]
\newtheorem{corollary}{Corollary}[section]
\newtheorem{proposition}{Proposition}[section]
\newtheorem{definition}{Definition}[section]
\newtheorem{remark}{Remark}[section]
\newtheorem{convention}{Convention}

\numberwithin{equation}{section}

\theoremstyle{definition}
\newtheorem{example}[theorem]{Example}

\makeatletter
\@namedef{subjclassname@2020}{%
  \textup{2020} Mathematics Subject Classification}
\makeatother

\begin{document}

\bibliographystyle{plain}
\author{Quo-Shin Chi, Zhenxiao Xie \& Yan Xu}
\thanks{This work was partially supported by NSFC No. 12171473 for the last two authors. The second author was also partially supported by the Fundamental Research Funds for Central Universities, and the third author was also partially supported by by NSFC No. 12301065, the Fundamental Research Funds for the Central
Universities, Nankai University (63241426) and Natural Science Foundation of Tianjin (22JCQNJC00880).}
\title{Fano 3-folds and classification of  
constantly curved holomorphic $2$-spheres of degree $6$ in the complex Grassmannian $G(2,5)$} 

\begin{abstract}  Up to now the only known constantly curved sextic curve, i.e., holomorphic $2$-sphere of degree 6, in the complex $G(2,5)$ has been the first associated curve of the Veronese curve of degree 4, which indicates that such curves are rare to find. Exploring the rich interplay between the 
%Riemann sphere and projectively equivalent Fano $3$-folds of index 2 and degree $5$, 
ramification of harmonic sequences in differential geometry and 
algebro-geometric properties of projectively equivalent Fano $3$-folds of index 2 and degree $5$, we invoke the moduli space structure  %%\cite{Takagi-2, Takagi-1} 
of sextic curves in the Fano $3$-fold often referred to as $V_5$ %%of Mukai and Umemura 
%%in the literature
to confirm the rarity of constancy of curvature, by establishing that the harmonic sequence of %a generic curve of degree $6$ in $V_5$ 
a generic sextic curve in $G(2,5)$ 
is totally unramified. %in the sense of harmonic sequences 
%%and hence not of constant curvature. %\cite{He-Jiao-Zhou2015}. 
This paper proposes to investigate from the Galois viewpoint the way ramification can appear in relation to the constancy of curvature among nongeneric sextic curves in $G(2,5)$. 

To this end, we break it into two cases. %We divide the fine structure of constantly curved holomorphic $2$-spheres into two cases. 
The first is when the %%hololomorphic 
sextic curve is $GL(5,{\mathbb C})$-equivalent to a curve $\gamma\subset V_5$  not living in the $PSL_2$-invariant tangent developable surface $\bf S$ of $V_5$, where we may lift $\gamma$ to a Galois cover in the ${\mathbb C}P^3$ containing $PSL_2$. By studying the branch points of the Galois covering in connection with the %points of 
intersection of $\gamma$ %of degree $6$ 
and $\bf S$ in $V_5$, we categorize such $\gamma$ further into two sub-families, namely, the family consisting of those $\gamma$ ramified at the singular locus of $\bf S$ somewhere, to be labeled as the {\em generally ramified family}, %%thanks to its relation to the notion of ramification in the theory of harmonic sequences, 
and the family complementary to it. In  
the second case when the %%hololomorphic 
$2$-sphere is $GL(5,{\mathbb C})$-equivalent to a $\gamma$ living in %the $PSL_2$-invariant developable surface of $V_5$,   
$\bf S$, we show by the $PSL_2$-invariant theory that it necessarily belongs to the generally ramified family. 
\iffalse
To understand the fine structure of constantly curved holomorphic $2$-spheres of degree $6$, we break it into two cases. The first is when the hololomorphic $2$-sphere is $GL(5,{\mathbb C})$-equivalent to a sextic curve $\gamma$ living in the $PSL_2$-invariant tangent developable surface $\bf S$ of $V_5$. By invoking the classical $PSL_2$-invariant theory, we show that we may cover $\gamma$ by a line in ${\mathbb CP}^3$ that contains $PSL_2$, so that in particular $\gamma$ is tangent to the singular locus of $\bf S$ at some point. In the second case when the hololomorphic $2$-sphere is $GL(5,{\mathbb C})$-equivalent to a sextic curve $\gamma$ not living in $\bf S$, we may lift $\gamma$ to a Galois cover in the same ${\mathbb C}P^3$.
By studying the ramified points of the Galois covering in relation to the points of intersection of $\bf S$ and the sextic curve, we categorize such sextic curves into two families, namely, the family consisting of curves tangent to the singular locus of $\bf S$ at some point, to be labeled as the generally ramified family thanks to its relation to the notion of ramification in the theory of harmonic sequences, and the family complementary to it. 
\fi

We prove through elaborate $PSL_2$-transvectant and engaged unitary analyses that, up to the ambient unitary %%and internal M\"{o}bius 
equivalence, the moduli space of constantly curved sextic curves in $G(2,5)$ that are $GL(5,{\mathbb C})$-equivalent to those in the generally ramified family, is semialgebraic of dimension 2, all members of which barring the above Veronese curve are
%%the second fundamental form of all such $2$-spheres are not of constant norm, and hence they 
nonhomogeneous. Many explicit examples can be constructed.

We also outline  in general the structure of the Galois covers of the sextic curves  in the family complementary to the generally ramified family. It appears to suggest, through all classes of rational Galois covers we have completely classified, each dependent on a single parameter, that the constantly curved sextic curves in $G(2,5)$ that are $GL(5,{\mathbb C})$-equivalent to the ones in this family, be nongeneric among all constantly curved ones in $G(2,5)$. 

\end{abstract}

%\subjclass[2020]{53C42, 53C55}
\maketitle

%\no
%{\small{}
%\iffalse
\vskip -0.6cm
{\footnotesize 
{\bf MSC(2020): Primary 53C42, 53C55; Secondary 14M15, 14J45, 14H45}

{\bf Keywords.} holomorphic curves, Gaussian curvature, linear section, quintic del Pezzo 3-fold
}
%\vskip -0.6cm
%\indent
%\fi
\section{Introduction}
Minimal surfaces constitute one of the most enduring topics in Differential Geometry that not only enjoys its deep links with partial differential equations, complex analysis, and algebraic curves, but also finds intriguing connections to the physical world.
%%Compared to the relatively complete research in the Euclidean space, less work has been done in ambient spaces with positive sectional curvature, especially the complex projective space $\mathbb{C}P^{n}$ and the complex Grassmannian $G(k,m)$. We are concerned in this paper with the classification problem of the constantly curved holomorphic $2$-spheres of degree $6$ in $G(2,5)$.
%%This type problem attracted considerable attention in the both physic and mathematical communities. 
In 1980, Din and Zakrzewski \cite{Din-Zakrzewski1980} classified complex projective $\sigma$-models, or, mathematically, harmonic maps from the $2$-sphere to the ambient projective space, to be the (projectivized) basis elements of a Frenet frame of a holomorphic ${\mathbb C}P^1$ into the ambient space. Subsequently, Burstall and Wood \cite{Burstall-Wood1986}, Chern and Wolfson \cite{Chern-Wolfson1987}, and Uhlenbeck \cite{Uhlenbeck1989} independently generalized it to other ambient spaces by different methods. 

Of all minimal surfaces, %%To obtain some rigidity or classification results, dditional topological and geometric constrains are required, such as being 
those of  constant curvature in different ambient spaces form a model class that have continually drawn attention, such as   %$\mathbb{S}^n$, 
%%minimal surfaces of constant curvature are investigated by 
Calabi \cite{Calabi1967}, Wallach \cite{Wallach1970}, Do Carmo-Wallach \cite{Do Carmo-Wallach1971}, Chen \cite{B.Y.-Chen1972}, Barbosa \cite{Barbosa1975}, Kenmotsu \cite{Kenmotsu1976}, and Bryant \cite{Bryant1985} in the real space forms, Kenmotsu \cite{Kenmotsu1985},  Bando-Ohnita \cite{Bando-Ohnita1987}, Bolton-Jensen-Rigoli-Wood \cite{Bolton-Jensen-Rigoli-Woodward1988}, Chi-Jensen-Liao \cite{Chi-Jensen-Liao1995}, and Kenmotsu \cite{Kenmotsu2000} in the complex projective spaces, and Yau \cite{S.T.-Yau1974} in K\"{a}hler manifolds  of nonnegative constant holomorphic sectional curvature. In particular, constantly curved minimal $2$-spheres in the real space forms are Bor\r{u}vka spheres \cite{O.-Boruvka1933}, up to rigid motion. Similarly, constantly curved minimal $2$-spheres in the complex projective spaces are, up to rigid motion, the (projectivized) basis elements of the Frenet frame of the Veronese curve of constant curvature, where the proof followed from %the theory of harmonic sequence and 
Calabi's rigidity principle \cite{Calabi1953} that states that if the isometric embedding from one complex manifold into the complex projective space  exists, then it is unique up to rigid motion. %%A first glimpse, this problem falls into the categories of Mathematical Physics and Algebraic Geometry. However, the expected dimension of the moduli space in Gromov-Witten theory would not coincide with the right one \cite[p.~6]{Rose2014}. Moreover, limited research has been done in the case of degree bigger or equal than $4$ \cite{Chung-Hong-Lee2018}. It should be pointed out that a number of issues related to constructing and classifying explicit examples are still unclear.

%%Game changes after taking metric into consideration. It was 
\iffalse
On the other hand, Calabi, in as early as the 1950s, initiated the study of isometric embeddings from complex manifolds into the complex projective space \cite{Calabi1953}. %%a K\"{a}hler geometry version of the Nash embedding problem. 
His rigidity principle states that if the embedding exists, then it is unique up to an action of the isometry group of $\mathbb{C}P^n$; in particular, there follows  the rigidity result that a holomorphic $2$-sphere of constant curvature in the complex projective space must be the Veronese curve, up to the ambient unitary and internal M\"{o}bius symmetries (see also~\cite{Bolton-Jensen-Rigoli-Woodward1988} for the extension to the minimal case). 
\fi
%%The key ingredient is the so called {\emph{diastasis}}}, a distance function obtained by the normalized potential for K\"{a}hler metric which works well in analytic case.
%%This has been proven successful, for instance, in classifying minimal $2$-spheres of constant curvature in $\mathbb{C}P^{n}$. In 1988, Bloton, Jensen, Rigoli and Woodward \cite{Bolton-Jensen-Rigoli-Woodward1988} proved that they must be the so called Veronese spheres. 

The rigidity principle of Calabi no longer holds for general ambient spaces. Motivated by the Grassmannian $\sigma$-models introduced by Din and Zakrzewski \cite{Din-Zakrzewski1981} and the rigidity principle, the first named author and Zheng \cite{Chi-Zheng1989} classified the noncongruent, constantly curved holomorphic $2$-spheres of degree $2$ in $G(2, 4)$ into two $1$-parameter families, by exploring the method of moving frames and Cartan's theory of higher order invariants \cite{Jensen1977}. Later on, Li and Yu \cite{ZQLi-ZuHuanYu1999} %and Li and Jin \cite{ZQLi-MMJin2008}
 classified all constantly curved minimal $2$-spheres in $G(2,4)$, using the Pl\"{u}cker embedding and the theory of harmonic sequences.

The next simplest ambient space is the complex Grassmannian $G(2,5)$. By analyzing a $2\times 5$ matrix representation of a holomorphic ${\mathbb C}P^1$, constantly curved holomorphic $2$-spheres in $G(2,5)$ are divided into two classes by  Jiao and Peng, the {\em singular} and the {\em nonsingular} ones (a technical condition different from the usual geometric meaning, see Section \ref{JP} for definition).  They classified nonsingular constantly curved holomorphic $2$-spheres of degree less than or equal to $5$ in $G(2,5)$, and proved the nonexistence of such spheres with degree $6\leq d\leq 9$ \cite{Jiao-Peng2004,Jiao-Peng2011}.  For the singular category, however, as the degree increases the computational complexity involved in their method rises dramatically. It is thus technically difficult to apply the method to construct singular  $2$-spheres in general. Subsequently, there have emerged several partial classifications (e.g. under the condition of total unramification or homogeneity) of constantly curved holomorphic (minimal) $2$-spheres in $G(2,5)$ or $G(2,n)$ in general; see \cite{He-Jiao-Zhou2015, Peng-Xu2015} and the references therein. 

Constantly curved holomorphic $2$-spheres in $G(2,4)$ and $G(2,5)$ have also been studied by Delisle, Hussin and Zakrzewski in \cite{Delisle-Hussin-Zakrzewski2013} from the viewpoint of Grassmannian $\sigma$-models, where the classification results they obtained coincide with those mentioned above. Moreover, they posed a conjecture about the upper bound of the degrees of constantly curved holomorphic $2$-spheres in the Grassmannians. This conjecture was affirmed by them in the case of $G(2,5)$, for which the upper bound equals $6$ (see also a recent paper \cite{He2022} with more detailed proof by He).

\iffalse
The next simplest ambient space is the complex Grassmannian $G(2,5)$. Jiao and Peng classified {\emph{nonsingular}} (see Section \ref{JP} for definition) constantly curved holomorphic $2$-spheres of degree less than or equal to $5$ in $G(2,5)$, and proved the nonexistence of such spheres with degree $6\leq d\leq 9$ \cite{Jiao-Peng2004,Jiao-Peng2011} by analyzing a $2\times 5$ matrix representation of the curve. The advantage of their method is that one can construct new explicit nonsingular examples in a straightforward manner. However, as the degree increases, the computational complexity rises dramatically. It is thus technically difficult to apply the method to construct {\em singular} (in the above sense) $2$-spheres in general. Subsequently, in 2015, He, Jiao and Zhou \cite{HeJiaoZhou2015} classified holomorphic $2$-spheres of constant curvature in $G(2,5)$, under an additional assumption referred to by the authors as {\em total unramification}, via the theory of harmonic sequence. Recently, He \cite{He2022} proved that nonsingularity and total unramification could be removed for degree $7\leq d\leq 9$ in $G(2,5)$, and so established the nonexistence of constantly curved holomorphic $2$-spheres of such degrees, %%by a delicate linear algebra analysis, 
providing evidence to Delisle-Hussin-Zakrzewski's conjectures \cite{Delisle-Hussin-Zakrzewski2013} on the upper and lower bounds of the degrees of constantly curved $2$-spheres in the Grassmannians. 
\fi

At the critical degree $d=6$, however, there does exist a singular %and ramified
 (in the above sense) constantly curved holomorphic $2$-sphere of degree $6$ in $G(2,5)$, namely,%%(see section \ref{secramifiedpoints}), %%(see \eqref{knowning example} and Remark \ref{onlykonwnexample})  
%\small{
%\vskip -0.35cm
\begin{equation}\label{eq-standard}
\begin{pmatrix}
      1 & 2z & \sqrt{6}z^2 & 2z^3 & z^4 \\
      0 & 1 & \sqrt{6}z & 3z^2 & 2z^3 \\
    \end{pmatrix},
\end{equation}
%\vskip -0.33cm
\noindent 
referred to in this paper as the {\em standard} Veronese curve in $G(2,5)$. To the authors' knowledge, it has been the only known example in the literature. Surprisingly, we will show in this paper that  the moduli space of constantly curved holomorphic $2$-spheres in $G(2,5)$ has a $2$-dimensional semialgebraic component, modulo rigid motion, out of which many explicit examples can be constructed.

\iffalse
referred to in this paper as the {\em standard} Veronese curve in $G(2,5)$. To the authors' knowledge, it has been the only known example in the literature. Surprisingly, we will show in this paper that the moduli space of constantly curved $2$-spheres in $G(2,5)$ is a $2$-dimensional semialgebraic set, up to ambient $U(5)$ and internal $PSL_2$ (M\"{o}bius) symmetries.
\fi
%%There are two main motivations for this paper: to investigate whether it is unique and to establish a conceptional method. Our main result is following 

Different from all existing methods, to see whether there are constantly curved holomorphic examples of degree 6 other than the standard Veronese curve in $G(2,5)$, let us return to our paper~\cite{Chi-Xie-Xu2021} for motivation, where we investigated constantly curved holomorphic (and minimal) $2$-spheres of degree $d$ in the complex hyperquadric. Such a holomorphic $2$-sphere is a rational normal curve of degree $d$ sitting in a projective $d$-plane, so that the $2$-sphere lies in the intersection of the $d$-plane and the hyperquadric called a {\em linear section} of the hyperquadric, which is itself a quadric (may be singular). Thus, the moduli space of such $2$-spheres is essentially a fibered space over the base space that is a semialgebraic subset  of the variety of linear sections of the hyperquadric.

In the same vein, albeit more sophisticated, via the Pl\"{u}cker embedding, a holomorphic $2$-sphere of degree 6 contained in $G(2,5)\subset{\mathbb C}P^9$ is a rational normal curve (a sextic curve) sitting in a projective $6$-plane $\bf L$ in ${\mathbb C}P^9$; thus, the curve lies in the linear section ${\bf L}\cap G(2,5)$. 
 Castelnuovo ~\cite{Castelnuovo1891} showed that generic (see Section \ref{muf3f} for definition)
such linear sections constitute the intriguing class of Fano 3-folds of index 2 and degree 5 all of which are projectively equivalent (see also ~\cite{Piontkowski-Van1999} for a detailed modern account and Section \ref{muf3f} for a quick overview).  

Employing $PSL(2,\mathbb{C})$-representations, Mukai and Umemura \cite{Mukai-Umemura1983} constructed a beautiful Fano $3$-fold of index 2 and degree $5$, to be denoted by ${\mathcal H}_0^3$ henceforth (often denoted by $V_5$ in the literature), which can be identified naturally with the linear section of $G(2,5)$ cut out by the $6$-plane ${\bf L}_0$ containing the above standard Veronese curve, whose tangent developable surface ${\bf S}\subset {\mathcal H}_0^3$ plays a crucial role in the sequel, where ${\bf L}_0$ turns out to be precisely the projectivization ${\mathbb P}(V_6)$ of the irreducible $PSL(2,\mathbb{C})$-module $V_6$ of dimension 7. This fits ideally in our differential-geometric framework for computation when the condition of constant curvature is engaged. %%following the same spirit as in our recent paper \cite{Chi-Xie-Xu2021}. 

Recall a holomorphic curve $F:M\rightarrow G(2,5)\subset{\mathbb C}P^9$ is {\em unramified} at $p$ if the tangent line to $F$ at $p$ does not lie entirely in $G(2,5)$, in which case $F$ is {\em totally unramified} at $p$ if the curve $[dF\wedge dF] \subset {\mathbb C}P^4$ is unramified as a projective curve at $p$. Now, transforming by $GL(5,\mathbb{C})$, we can use the sextic curves in $\mathcal{H}^3_0$ to parameterize holomorphic $2$-spheres of degree $6$ in $G(2,5)$. Takagi and Zucconi's work on the Moduli space (Hilbert scheme) \cite{Takagi-2, Takagi-1} of sextic curves in $\mathcal{H}^3_0$,  in which %In their works, 
the intersection properties between a general sextic curve in $\mathcal{H}^3_0$ and lines and conics were investigated, turns out to characterize %be helpful, Utilizing an algebro-geometric characterization in relation to 
the total unramification of harmonic sequences (see Theorem~\ref{thm-ramichar}), from which %it facilitates us to 
we obtain that 
%{\bf Theorem 1.}
%{\em 
generic holomorphic $2$-spheres of degree $6$ in $G(2,5)$ are not of constant curvature (see Theorem~\ref{thm-generic}). 
%} 
%Here genric mean differs from a general sextic curve by a $GL(5,C)$-transformation. 

%The proof of the aforementioned theorem is based on a novel algebro-geometric characterization for totally unramification of harmonic sequences (see Theorem~\ref{thm-ramichar}). 

To  find constantly curved nongeneric holomorphic $2$-spheres of degree $6$ in $G(2,5)$, we explore the way ramification occurs from the standpoint of Galois covers. We approach this by separating the analysis into two distinct cases. When a sextic curve $\gamma\subset {\mathcal H}_0^3$ does not live in $\bf S$, by exploring Mukai and Umemura's orbit decomposition structure of ${\mathcal H}_0^3$, we may        %%extremal in the sense of Castelnuovo \cite{Arbarello-Cornalba-Griffiths-Harris1985}, 
lift $\gamma$ to a Galois cover in the natural %%$\mathbb{P}(M_2(\mathbb{C}))\cong 
 $\mathbb{C}P^3$ containing $PSL(2,{\mathbb C})$ (see Section \ref{sec5.2}).               %%where $M_2({\mathbb C})$ is the $2\times 2$ complex matrix space, 
Studying the Galois covering at its branch points that cover the points of intersection of $\bf S$ and the sextic curve, enables us to categorize all sextic curves not contained in $\bf S$ into two classes, namely, the more flexible one labeled as the {\em generally ramified family}, consisting of those sextic curves ramified at the singular locus of $\bf S$ somewhere, and the more rigid complementary one labeled as the {\em exceptional transversal family}.

In contrast, when a sextic curve $\gamma$ lives in $\bf S$ but not in its singular locus, through the $PSL_2$-invariant theory, we may explicitly lift $\gamma$ to a line in the same ${\mathbb C}P^3$, so that in particular $\gamma$ also falls in the generally ramified family.

It turns out that a constantly curved holomorphic $2$-sphere of degree $6$, $GL(5,{\mathbb C)}$-equivalent to a sextic curve lying in the generally ramified family, spans a $6$-plane $\bf L$ differing from ${\bf L_0}$ by  a diagonal transformation of $GL(5,{\mathbb C})$. This is done through elaborate $PSL_2$-invariant transvectant and engaged unitary analyses (see Section \ref{sextic}) %%\subset {\mathbb C}P^9$, which states that the curve is ramified at $p$ when its tangent line in ${\mathbb C}P^9$ at $p$ lies entirely in $G(2,5)$, 
to yield the following.

\vskip 0.3cm
\noindent
{\bf Theorem.}
{\em The moduli space ${\mathcal M}$ of constantly curved holomorphic $2$-spheres of degree $6$ in $G(2,5)$, which are $GL(5,{\mathbb C})$-equivalent to sextic curves living in the generally ramified family, is a $2$-dimensional semialgebraic set, up to the ambient $U(5)$-equivalence.}  %% and intrinsically the internal M\"{o}bius reparametrization. }%Moreover, in each Fano $3$-fold, there is at most one constantly curve holomorphic $2$-sphere, up to complex conjugation.}
\vskip 0.3cm

The moduli space structure facilitates the computation to verify that the second fundamental form of all members of ${\mathcal M}$ are not of constant norm, %nonparallel, 
and thus all but the standard Veronese curve are nonhomogeneous.

Of particular interest are three points in the moduli space $\mathcal{M}$, for each of which the corresponding Fano 3-fold contains a unique constantly curved holomorphic $2$-sphere of degree $6$, whereas the Fano 3-fold corresponding to a point other than the three in $\mathcal{M}$ contains exactly two distinct constantly curved holomorphic $2$-spheres conjugated to each other in an appropriate sense (see Sections ~\ref{sec7} and \ref{lastsection}).

\iffalse
 The collection of such $2$-spheres in the linear sections ${\bf L}\cap G(2,5)$ as depicted in the preceding theorem is referred to in this paper as the {\em diagonal family}.
%%Theorem \ref{main thm1} (see also Theorem \ref{classification theorem}).
An in-depth algebro-geometric investigation of the diagonal family enables us to characterize their moduli space in Section \ref{sec5}. %% (see Proposition \ref{prop-exist}). 
%% (see also Theorem \ref{moduli space})
\fi

\iffalse
\begin{introtheorem}
The moduli space of the diagonal family is a $2$-dimensional semialgebraic set, up to extrinsically the ambient unitary $U(5)$-equivalence and complex conjugation, and intrinsically the internal M\"{o}bius reparametrization. Moreover, in each Fano $3$-fold, there is at most one constantly curve holomorphic $2$-sphere, up to complex conjugation.
\end{introtheorem}
\fi

Our approach facilitates the explicit construction of many new examples, through algebro-geometric means, of constantly curved $2$-spheres of degree 6.  %% except for the standard Veronese curve. %%[Moreover, for these $2$-spheres, the the norm squared integral of the second fundamental form is calculated, using which we can characterize an interesting new example (see Proposition~\ref{prop-willmore}).  TO BE MODIFIED] %This is a consequence of the Gauss equation.
 %%(see also Proposition \ref{nonhomogenous})
%%\begin{proposition}\label{pro1}
%%The holomorphic $2$-spheres of the diagonal family are non-homogeneous except for the standard Veronese curve.%%$PSL_2\cdot u^6$.
%% \end{proposition} 
%%Notice that the minimal homogeneous $2$-spheres in $G(2,5)$ were classified by C.K. Peng and X.W. Xu \cite{Peng-Xu2015}. In particular, the only holomorphic homogeneous $2$-sphere of of degree $6$ is the Veronese curve $PSL_2\cdot u^6$. Our classification is geometric. %%To the authors' knowledge, the diagonal family is original and appears first time in the literature.

%%By numerical evidence, we conjectured that the moduli space of the diagonal family is topologically a unit disk in the last section. We will explore it in the future.

%We end the paper with interesting individual as well as 1-parameter families of new examples in Section \ref{lastsection}, and pose two moduli space questions.

Based on Felix Klein's work \cite{Klein1956}, we have completely classified all Galois covers of genus zero and their corresponding sextic curves in ${\mathcal H}_0^3$ for the exceptional transversal family (see Table 2, Section \ref{sec5.4}), which consists of a few $1$-parameter examples and hence at most finitely many such of constant curvature in ${\mathcal H}_0^3$ by considering total unramification. (Since the classification is long, we only indicate a couple of examples in the current paper. See Section \ref{exceptional}.)
It is tempting to suggest, %, %as a Galois cover of genus $>1$ carrries only finitely many automorphisms, 
up to $U(5)$-equivalence, that there would be at most finitely many $1$-parameter examples of constantly curved $2$-spheres in the transversal exceptional family.

The paper is organized as follows. Section $2$ is devoted to recalling the representation theory of  $PSL(2,\mathbb{C})$,  as well as Jiao and Peng's classification of nonsingular (in their sense) constantly curved holomorphic $2$-spheres in $G(2,5)$. %, and ramified points of holomorphic curves in $G(2,5)$ which will play an important role in the sequel. 
In Section 3, we introduce briefly the theory of generic linear sections of $G(2,5)$ and the Fano 3-fold ${\mathcal H}_0^3$ constructed by Mukai and Umemura. In Section \ref{Sec4}, we %%invoke the moduli space (Hilbert scheme) structure \cite{Takagi-2} of sextic curves in ${\mathcal H}_0^3$, to 
show that generic holomorphic $2$-spheres of degree $6$ in $GL(2,5)$ are not constantly curved.  %% as a consequence that such curves are totally unramified in the sense of harmonic sequences and hence not constantly curved \cite{He-Jiao-Zhou2015}. 
%%To understand the fine structure of constantly curved holomorphic $2$-spheres of degree $6$ in $G(2,5)$, 
In Section \ref{sec-para}, %%we first take care of the case when the $2$-sphere is $GL(5,{\mathbb C})$-equivalent to a curve $\gamma$ living in $\bf S,$ where it is shown that $\gamma$ can be lifted to a line in ${\mathbb C}P^3$; 
when a sextic curve $\gamma\subset {\mathcal H}_0^3$ is not in ${\bf S}$, 
we introduce its Galois lift in ${\mathbb C}P^3$, where Galois analyses lead to the generally ramified family that also includes the case when $\gamma\subset {\bf S} $ is not in the singular locus of $\bf S$, as outlined above. Starting from Section~~\ref{sextic}, we explore $PSL_2$-transvectant and engaged unitary analyses in preparation for the existence and uniqueness (Theorem \ref{prop-exist}) of constantly curved holomorphic $2$-spheres of degree $6$ in the generally ramified family in Section \ref{sec7}, and for the moduli space structure of the aforementioned Theorem (Theorem \ref{moduli space}) and related results in Section \ref{lastsection}, from which interesting individual as well as 1-parameter families of new examples are exhibited.

\section{Priliminaries}

\subsection{Irreducible representations of $SL_2(\mathbb{C})$.} ~

Let $V_n$ be the the space of binary forms of degree $n$ in two variables $u$ and $v$, on which $SL_2({\mathbb C})$ (to be denoted by $SL_2$)
acts by 
\begin{equation}\label{sl2}
\aligned
&SL_2\times V_n \rightarrow V_n,\quad (g,f) &\mapsto  (g\cdot f)(u,v)\triangleq f (g^{-1}\cdot (u,v)^t). 
\endaligned
\end{equation}
It is well-known that $V_n,~n\in\mathbb{Z}_{\geq 0}$, are the only finite-dimensional irreducible representations of $SL_2$.

Choose the following basis of $V_n$,
\begin{equation}\label{basis of repre of su2}
e_{l}\triangleq \tbinom{n}{l}^{\frac{1}{2}}u^{n-l}v^{l},~~~l=0,\ldots,n.
\end{equation}
Under this basis, write 
\begin{equation}\label{irre repre in matrix form}
(e_{0},\ldots,e_{n})\, \rho^{n}(g)\triangleq (g\cdot e_{0},g\cdot e_1,\ldots,g\cdot e_{n}). 
\end{equation}
The representation  $\rho^{n}(g): SL_2\rightarrow  GL(n+1;\mathbb{C})$ induces the wedge-product representation
\begin{equation}
\label{action on wedge}
SL_2\times V_n\wedge V_n\rightarrow V_n\wedge V_n, \quad
 (g, e_{k}\wedge e_{l})\mapsto (g\cdot e_{k}) \wedge (g\cdot e_{l}), ~~~0\leq k,l\leq n.
 \end{equation}
For the sake of clarity, we view $V_n\wedge V_n$ as the space of anti-symmetric matrices $\wedge^2 \mathbb{C}^{n+1}$, by identifying $e_{k}\wedge  e_{l}$ with the anti-symmetric matrix $E_{kl}-E_{lk}\in M_{n+1}(\mathbb{C})$, where the only nonvanishing entry of $E_{kl}$ is $1$ at the $(k,l)$ position,~$0\leq k<l\leq n$. With the basis $\{e_k\wedge e_l~|~0\leq k<l\leq n\}$ (see \eqref{basis of repre of su2}), it is not difficult to obtain the wedge-product representation in matrix form,
\begin{equation*}
\rho^n\wedge \rho^n: SL_2\times \wedge^2 \mathbb{C}^{n+1}\rightarrow \wedge^2 \mathbb{C}^{n+1}, \quad
 (g, A)\mapsto ~(\rho^{n}(g))\cdot A\cdot (\rho^{n}(g))^{t}.
\end{equation*}
The Clebsch-Gordan formula states that  (assume $m\geq n$) 
\[V_m\otimes V_n\cong V_{m+n}\oplus V_{m+n-2}\oplus\cdots \oplus V_{m-n}.\]
Moreover, for any given number $p\in [0, n]$, the projection $V_m\otimes V_n  \rightarrow V_{m+n-2p}$ can be formulated by 
\begin{equation}\label{transvectant}
(f,h)\mapsto (f,h)_{p}\triangleq \frac{(m-p)!(n-p)!}{m!n!}\sum_{i=0}^{p}(-1)^{i}\binom{p}{i}\frac{\partial^p f}{\partial u^{p-i}\partial v^i}\frac{\partial^p h}{\partial u^{i}\partial v^{p-i}},
\end{equation}
which is $PSL_2$-equivariant and is called the \emph{$p$-th transvectant} \cite[Eq (2.1), p.~16]{PopoviciuDraisma2014}. Moreover,
\begin{equation}\label{eq-clebsch}
V_n\wedge V_n \cong V_{2n-2}\oplus V_{2n-6}\oplus \ldots \oplus V_{r},
\end{equation}
where $r$ is the remainder of $2n-2$ divided by $4$, and the projections are the same as \eqref{transvectant}. 

Gordan proved that the binary sextic $V_6$ has $5$ invariants and $26$ covariants given by a finite number of iterated transvectants \cite[Table 1.1, p.~12; Theorem 2.1.3, p.~18]{PopoviciuDraisma2014}, among which $(f,f)_2,~(f,f)_4,~(f,f)_6$  \cite[Sections 4.5, 5.6]{PopoviciuDraisma2014} appear in geometry in an unexpected way (see Proposition \ref{orbits defined by transvectant}).

\subsection{Holomorphic $2$-spheres in $G(2,5)$}\label{JP}~

We briefly review some basic facts of constantly curved holomorphic $2$-spheres in the complex Grassmannian $G(2,5)$, and along the way  introduce those \emph{nonsingular} ones that Jiao and Peng \cite{Jiao-Peng2004} defined and classified. 

Throughout, we equip $G(2,5)$ with the standard K\"ahler metric 
%%$$\mathrm{g}\triangleq tr\left((I_2+P\, P^*)^{-1}dP\, (I_5+P^*\, P)^{-1}dP^*\right),$$
%%where $P\in G(2,5)$ is seen as a $2\times 5$ matrix, which is 
induced from the Fubini-Study metric of ${\mathbb C}P^9$ when
$G(2,5)$ is realized as a subvariety of $\mathbb{C}P^9$ by the Pl\"{u}cker embedding, 
\[i: G(2,5)\rightarrow \mathbb{P}(\wedge^2 \mathbb{C}^5)\cong \mathbb{C}P^9,\;\Span\{u,v\}\mapsto [u\wedge v].\]
Explicitly, let $\{\epsilon_0,\epsilon_1,\ldots,\epsilon_4\}$ be a basis of $\mathbb{C}^5$. Then  $\{\epsilon_i\wedge \epsilon_j\,|\, 0\leq i<j\leq 4\}$ forms a basis of $\wedge^2 \mathbb{C}^5$ so that $p=[\sum_{i,j} p_{ij}\,\epsilon_i\wedge \epsilon_j]$ belongs to $G(2,5)$ if and only if $p\wedge p=0$, which is equivalent to 
{\small\begin{equation}\label{G(2,5)}
\aligned
&p_{01}p_{23}-p_{02}p_{13}+p_{03}p_{12}=0,\quad p_{01}p_{24}-p_{02}p_{14}+p_{04}p_{12}=0,\\&p_{01}p_{34}-p_{03}p_{14}+p_{04}p_{13}=0,\quad
p_{02}p_{34}-p_{03}p_{24}+p_{04}p_{23}=0,\\&p_{12}p_{34}-p_{13}p_{24}+p_{14}p_{23}=0.
\endaligned
\end{equation}}
\begin{remark}\label{rk-plucker}
It follows from the definition that $G(2,5)$ is $PSL_2$-invariant under the wedge-product action $\rho^4\wedge \rho^4$ given in \eqref{action on wedge}. 
\end{remark}

Let  $\varphi:\mathbb{C}P^1\rightarrow G(2,5)$ be a holomorphic $2$-sphere. 
It follows from the Normal Form Lemma \cite{Piontkowski1996} that there exist two holomorphic curves $f,g:\mathbb{C}P^1\rightarrow \mathbb{C}P^4$, such that $\varphi=\Span\{f,g\}$.  Explicitly, choosing an affine coordinate $z$ on $\mathbb{C}P^1$, we can write 
$f(z)=(f_0(z),\ldots,f_4(z))$ and $g(z)=(g_0(z),\ldots,g_4(z))$ as row vectors with polynomial entries except at some isolated points. 

In view of Remark~\ref{rk-plucker}, we obtain that $\varphi$ is of constant curvature $K$ if and only if $i\circ \varphi$ is of constant curvature $K$ under the Pl\"ucker embedding. This guarantees that the rigidity principle of Calabi can be employed to study constantly curved holomorphic $2$-spheres in $G(2,5)$, which we rephrase as follows for reference. 

\begin{lemma}\label{constant curvature}
Let $f:\mathbb{C}P^1\rightarrow \mathbb{C}P^{n}$ be a holomorphic $2$-sphere of degree $d$. The following are equivalent.
\begin{itemize} \item[(1)]  
The Gauss curvature $K$ of $f$ is $\frac{4}{d}$ \emph{.} Furthermore, up to the action of $U(n+1)$ and M\"{o}bius reparametrization, $f$ is given by the Veronese sphere
\begin{equation}\label{Veronse sphere}
Z_d(z)\triangleq [1:\sqrt{d}z:\cdots:\sqrt{\tbinom{d}{k}}z^k:\cdots:z^d]^t.
\end{equation}
\item[(2)] 
There is an affine chart $z\in{\mathbb C}$ over which $|f|^2=(1+|z|^2)^d$.
\item[(3)] 
There is an affine chart $z\in{\mathbb C}$ over which $ f=\sum_{k=0}^{d} \sqrt{\tbinom{d}{k}}A_k z^k$, and $\{A_0, A_1,\cdots, A_6\}$ forms an orthonormal basis of the $d$-plane spanned by $f$.
\end{itemize}
\end{lemma}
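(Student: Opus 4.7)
The plan is to establish the lemma by showing the logical chain $(3)\Rightarrow(2)\Rightarrow(1)\Rightarrow(3)$, using condition $(2)$ as the central analytic hub.

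First I would prove $(3)\Rightarrow(2)$, which is a direct calculation: writing $f=\sum_{k=0}^{d}\sqrt{\binom{d}{k}}\,A_{k}z^{k}$ with $\{A_{0},\dots,A_{d}\}$ orthonormal gives $|f|^{2}=\sum_{j,k}\sqrt{\binom{d}{j}\binom{d}{k}}\langle A_{j},A_{k}\rangle z^{j}\bar{z}^{k}=\sum_{k}\binom{d}{k}|z|^{2k}=(1+|z|^{2})^{d}$ by the binomial theorem. Conversely for $(2)\Rightarrow(3)$, since $f$ is a holomorphic $2$-sphere of degree $d$, on an affine chart we can write $f(z)=\sum_{k=0}^{d}B_{k}z^{k}$ for column vectors $B_{k}\in \mathbb{C}^{n+1}$. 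Expanding $|f|^{2}=\sum_{j,k}\langle B_{j},B_{k}\rangle z^{j}\bar{z}^{k}$ and comparing the resulting coefficients with those of $(1+|z|^{2})^{d}=\sum_{k}\binom{d}{k}|z|^{2k}$ forces $\langle B_{j},B_{k}\rangle=\binom{d}{k}\delta_{jk}$, so the $B_{k}$ are pairwise orthogonal with $\|B_{k}\|^{2}=\binom{d}{k}$; setting $A_{k}=B_{k}/\sqrt{\binom{d}{k}}$ yields the desired orthonormal system and furnishes $(3)$.

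Next I would verify $(2)\Rightarrow(1)$ using the fact that $\log|f|^{2}$ serves as a local Kähler potential for the induced metric. If $|f|^{2}=(1+|z|^{2})^{d}$, then the pullback metric is $\mathrm{g}=\partial_{z}\partial_{\bar{z}}\log|f|^{2}\,dz\,d\bar{z}=d(1+|z|^{2})^{-2}dz\,d\bar{z}$, and a routine computation of $K=-(2/\mathrm{g})\partial_{z}\partial_{\bar{z}}\log\mathrm{g}$ yields the constant value $K=4/d$. To obtain $(1)\Rightarrow(2)$, note that the Veronese curve $Z_{d}$ of \eqref{Veronse sphere} satisfies $|Z_{d}|^{2}=(1+|z|^{2})^{d}$, so it realizes the same Fubini–Study-induced metric. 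Calabi's rigidity principle, already invoked above for holomorphic $2$-spheres in $\mathbb{C}P^{n}$, then guarantees that any other such $f$ of constant curvature $4/d$ agrees with $Z_{d}$ up to a unitary transformation in $U(n+1)$ and a Möbius reparametrization, so after this adjustment $|f|^{2}=(1+|z|^{2})^{d}$ holds on the corresponding affine chart; this simultaneously supplies the second half of $(1)$.

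There is no real obstacle here: the only place requiring care is keeping track of the curvature normalization (our convention is $K=-(2/\mathrm{g})\partial_{z}\partial_{\bar{z}}\log\mathrm{g}$, which is the one under which the Fubini–Study metric on $\mathbb{C}P^{1}$ has $K=4$) and verifying that the holomorphic ambiguity $|f|^{2}\mapsto|e^{h(z)}f|^{2}$ in the choice of local lift can always be absorbed so as to bring $|f|^{2}$ into the canonical form $(1+|z|^{2})^{d}$ after a Möbius reparametrization. Once these normalizations are fixed, the proof reduces to the elementary coefficient comparison and the invocation of Calabi's rigidity recorded in the introduction.
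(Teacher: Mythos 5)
Your proposal is correct and follows essentially the same route as the paper: the paper states this lemma without proof, presenting it explicitly as a rephrasing of Calabi's rigidity principle, and your argument does exactly that—routine coefficient comparison for $(2)\Leftrightarrow(3)$, the standard K\"ahler-potential curvature computation for $(2)\Rightarrow(1)$, and an invocation of Calabi rigidity for the direction that forces the Veronese form. The only caveat worth noting is that the entire rigidity content rests on the cited Calabi theorem rather than being proved, but that is precisely how the paper itself treats it.
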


For a constantly curved holomorphic $2$-sphere $\varphi:\mathbb{C}P^1\rightarrow G(2,5)$, it is known  \cite{Jiao-Peng2004,  ZQLi-ZuHuanYu1999} that $\varphi$ can be parameterized as $\varphi=\big(\varphi_1(z)^t,\varphi_2(z)^t\big)^t$ with 
\begin{equation}\label{eq-spara}
\hspace{-3mm}
\varphi_1(z)\!\!=\!\!\big(\!1,0,\varphi_{12}(z),\varphi_{13}(z),\varphi_{14}(z)\big),~ \varphi_2(z)\!\!=\!\!\big(\!0,1,\varphi_{22}(z),\varphi_{23}(z),\varphi_{24}(z)\big),\!\!
\end{equation}
where $\varphi_{1i}(z)$ and $\varphi_{2i}(z)$ ($2\leq i\leq 4$) are polynomials vanishing at $z=0$. In the sequel, \eqref{eq-spara} will be called a \emph{standard parameterization} of $\varphi$.  We point out that this kind of parameterization is not unique. In fact, if $\{\varphi_1,~\varphi_2\}$ is a standard parameterization of $\varphi$, then $\{\alpha \varphi_1+\beta \varphi_2, -\bar{\beta}\varphi_1+\bar{\alpha}\varphi_2\}$ is also a standard parameterization after rotating $\epsilon_0$ and $\epsilon_1$ while maintaining $|\alpha|^2+|\beta|^2=1$. 

In  \cite{Jiao-Peng2004}, a holomorphic $2$-sphere  $\varphi:\mathbb{C}P^1\rightarrow G(2,5)$ is called \emph{nonsingular} if there exists a standard parameterization $\{\varphi_1,~\varphi_2\}$ of $\varphi$, such that
$[\varphi_1(\infty)]\neq [\varphi_2(\infty)]$ in $\mathbb{C}P^4$.  Otherwise, $\varphi$ is called  \emph{singular}.  It is easy to verify that $\varphi$ is nonsingular if and only if there exists a standard parameterization $\{\varphi_1, \varphi_2\}$ of $\varphi$, such that
\begin{equation}\label{eq6}
\deg\varphi=\deg \varphi_1 +\deg \varphi_2.
\end{equation}

Using a standard parameterization, one can construct explicitly nonsingular examples as was done by Jiao and Peng in \cite{Jiao-Peng2004}. Indeed, under the nonsingular assumption, Jiao and Peng in the paper proved the following nonexistence result. 

\begin{theorem}
There does not exist {\emph{nonsingular}} holomorphic constantly curved $2$-spheres of degree $6$ in $G(2,5)$.
\end{theorem}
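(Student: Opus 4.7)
The plan is to run a Calabi-rigidity argument on the Pl\"ucker image and exhaust the four admissible degree-splittings coming from nonsingularity. Taking a standard parameterization $\varphi=(\varphi_1,\varphi_2)^t$, set $(d_1,d_2):=(\deg\varphi_1,\deg\varphi_2)$; the nonsingularity identity \eqref{eq6} forces $d_1+d_2=6$, and after swapping the two rows we may assume $(d_1,d_2)\in\{(0,6),(1,5),(2,4),(3,3)\}$. Since $i\circ\varphi$ is a holomorphic $\mathbb{C}P^1$ of degree $6$ in $\mathbb{C}P^9$ with constant curvature $2/3$, Lemma~\ref{constant curvature} supplies both the scalar identity
\[
(1+|\tilde\varphi_1|^2)(1+|\tilde\varphi_2|^2)-|\langle \tilde\varphi_1,\tilde\varphi_2\rangle|^2=(1+|z|^2)^6,
\]
where $\tilde\varphi_j\in\mathbb{C}^3[z]$ denotes the last three entries of $\varphi_j$, and the Taylor statement that the coefficients $v_0,\ldots,v_6$ of $i\circ\varphi=\sum_{k=0}^{6}\sqrt{\binom{6}{k}}\,v_k\,z^k$ form an orthonormal frame in $\wedge^2\mathbb{C}^5$.

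Case $(0,6)$ is killed immediately on dimensional grounds: with $\tilde\varphi_1\equiv 0$ the image $i\circ\varphi$ sits inside the $4$-plane $\Span\{\epsilon_0\wedge\epsilon_k:1\leq k\leq 4\}$, contradicting the $7$-dimensional span of $Z_6$. For the other three splits I would use the orthogonal decomposition
\[
\wedge^2\mathbb{C}^5=\mathbb{C}(\epsilon_0\wedge\epsilon_1)\oplus(\epsilon_0\wedge\mathbb{C}^3)\oplus(\epsilon_1\wedge\mathbb{C}^3)\oplus\wedge^2\mathbb{C}^3,
\]
with $\mathbb{C}^3=\Span\{\epsilon_2,\epsilon_3,\epsilon_4\}$, and expand $\tilde\varphi_j=\sum_{k\geq 1}A_k^{(j)}z^k$ with $A_k^{(j)}\in\mathbb{C}^3$. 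The $z^k$-coefficient of $i\circ\varphi$ is then $\epsilon_0\wedge A_k^{(2)}-\epsilon_1\wedge A_k^{(1)}+\sum_{i+j=k}A_i^{(1)}\wedge A_j^{(2)}$, so for $k>\max(d_1,d_2)$ the vector $v_k$ lies in the $3$-dimensional subspace $\wedge^2\mathbb{C}^3$; in the extremal case $(3,3)$ the three vectors $v_4,v_5,v_6$ therefore form an \emph{orthonormal basis} of $\wedge^2\mathbb{C}^3$.

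The orthogonality of each lower-index $v_j$ against this ``pure top'' batch forces its $\wedge^2\mathbb{C}^3$-projection into the orthogonal complement of $\Span\{v_m:m>\max(d_1,d_2)\}$ inside $\wedge^2\mathbb{C}^3$. In $(3,3)$ this complement is trivial, so $v_2,v_3$ have zero $\wedge^2\mathbb{C}^3$-projection, giving the wedge identities $A_1^{(1)}\wedge A_1^{(2)}=0$ and $A_1^{(1)}\wedge A_2^{(2)}+A_2^{(1)}\wedge A_1^{(2)}=0$, which propagate triangular relations like $A_1^{(2)}=\lambda A_1^{(1)}$ and $A_2^{(2)}=\lambda A_2^{(1)}+\mu A_1^{(1)}$. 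In cases $(1,5)$ and $(2,4)$ the analogous partial vanishings confine the higher-degree coefficient vectors of the longer row to a subspace of $\mathbb{C}^3$ of dimension at most $2$; for instance, in $(1,5)$ the conditions $\langle v_1,v_k\rangle=0$ for $k\geq 2$ pin $A_2^{(2)},\ldots,A_5^{(2)}$ inside $(A_1^{(2)})^{\perp}$.

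To close each case I would combine these wedge identities with (i) the scalar norm/orthogonality relations extracted from $\|v_k\|^2=1$ and $\langle v_j,v_k\rangle=0$, which unpack into equations of the shape $\|A_k^{(1)}\|^2+\|A_k^{(2)}\|^2+\|\sum_{i+j=k}A_i^{(1)}\wedge A_j^{(2)}\|^2=\binom{6}{k}$, and (ii) the nonsingularity statement $A_{d_1}^{(1)}\wedge A_{d_2}^{(2)}\neq 0$. After exhausting the residual $U(3)\times U(2)\times\mathbb{C}^{\times}$ symmetry by a QR-type reduction of the $A_k^{(1)}$'s together with a diagonal rescaling of $z$, the remaining system collapses to a polynomial system in a handful of complex parameters that I would verify to be inconsistent. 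I expect the main obstacle to be case $(3,3)$: the row-symmetry between $\tilde\varphi_1$ and $\tilde\varphi_2$ prevents any quick rank argument from finishing the proof, so extracting the contradiction there requires a careful joint analysis of the diagonal norm sums, the off-diagonal inner-product vanishings, and the propagation of the triangular relations above into the top equation $\|A_3^{(1)}\wedge A_3^{(2)}\|^2=1$.
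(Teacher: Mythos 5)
Your proposal follows exactly the strategy the paper sketches (which is Jiao--Peng's own argument from \cite{Jiao-Peng2004,Jiao-Peng2011}): assume nonsingularity, use \eqref{eq6} to enumerate the degree splittings $(d_1,d_2)$ of a standard parameterization, expand $i\circ\varphi=\varphi_1\wedge\varphi_2=\sum_{k}\sqrt{\binom{6}{k}}\,v_k z^k$, and contradict the orthonormality of the coefficient vectors demanded by item (3) of Lemma~\ref{constant curvature}. Your structural preparation is correct and somewhat more explicit than the paper's sketch: the dimensional exclusion of the splitting $(0,6)$, the observation that $v_k\in\wedge^2\mathbb{C}^3$ for $k>\max(d_1,d_2)$, and the resulting identities $A_1^{(1)}\wedge A_1^{(2)}=0$ and $A_1^{(1)}\wedge A_2^{(2)}+A_2^{(1)}\wedge A_1^{(2)}=0$ in the $(3,3)$ case all check out.

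The genuine gap is that the contradiction itself is never derived. In each of the three surviving cases $(1,5)$, $(2,4)$, $(3,3)$ you reduce to a polynomial system and then state that you ``would verify'' it to be inconsistent, explicitly conceding that $(3,3)$ resists any quick rank argument. That verification is not a routine loose end; it is the entire content of the theorem, and it is precisely the part the paper itself does not reproduce but defers to \cite{Jiao-Peng2004,Jiao-Peng2011}. As written, your proposal is a (correct) plan, not a proof. A secondary issue you should also repair: items (2) and (3) of Lemma~\ref{constant curvature} assert the existence of \emph{some} affine chart with the stated property, in general related to the chart of your standard parameterization by a M\"obius transformation; you invoke both in the standard-parameterization chart without showing that the two charts can be made to coincide while preserving the standard form and the degree splitting. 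This compatibility (absorbing the M\"obius freedom and re-standardizing) must be argued before the orthonormality of your $v_k$ and the identity $(1+|\tilde\varphi_1|^2)(1+|\tilde\varphi_2|^2)-|\langle \tilde\varphi_1,\tilde\varphi_2\rangle|^2=(1+|z|^2)^6$ can be used simultaneously in the same coordinate.
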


The idea goes as follows. By contradiction, otherwise, it would follow from  \eqref{eq6} that we had only three possibilities that
$(\deg \varphi_1,\deg \varphi_2)=(5,1),~(4,2),~(3,3).$
In each case, we obtained vectors $A_k,~0\leq k\leq 6$, where
$i\circ \varphi=\varphi_1\wedge \varphi_2\triangleq\sum_{k=0}^{6} \sqrt{\tbinom{d}{k}}A_k z^k$,
in terms of  undetermined coefficients of $\varphi_1$ and $\varphi_2$ to violate item (3) of Lemma \ref{constant curvature}. 

As the degree of $\varphi$ increases, however, the number of undetermined coefficients rises dramatically, so that it is technically difficult to apply the method to construct {\em singular} $2$-spheres.

It is readily verified that the Veronese curve \eqref{eq-standard} given in the introduction is singular in terms of Jiao and Peng's definition, where a standard parameterization in the sense of \eqref{eq-spara} can be chosen to be
\begin{equation}\label{eq-standard1}
\begin{pmatrix}
      1 & 0 & -\sqrt{6}z^2 & -4z^3 & -3z^4 \\
      0 & 1 & \sqrt{6}z & 3z^2 & 2z^3 \\
    \end{pmatrix}. 
\end{equation}
We point out that this example is smooth (nonsingular) in the usual algebro-geometric sense, which is indeed what we are after.

\subsection{Reducible and Irreducible holomorphic curves in $G(2,5)$}\label{secramifiedpoints}~

For later purposes, we develop the extrinsic geometry of holomorphic curves in $G(2,5)$ from the viewpoint of developable surfaces.

Let $f:M\rightarrow G(2,5)$ be a holomorphic map from a Riemann surface $M$. Composed with the Pl\"{u}cker embedding, $ F\triangleq  i\circ f$ is a holomorphic curve in $\mathbb{C}P^9=\mathbb{P}(\wedge^2{\mathbb{C}^5})$. Since $F$ lies in $G(2,5)$,  we have $F\wedge F\equiv 0$, whose derivative with respect to a local complex coordinate $z$ yields that $F\wedge {\partial F}/{\partial z}=0$. Consider the \emph{tangent developable surface} $\mathcal{D}$ of $F$ in $\mathbb{C}P^9$, spanned by $F$ and its tangent line ${\partial F}/{\partial z}$, 
\[\mathcal{D}\triangleq \{[u\,F+v\,{\partial F}/{\partial z}]~|~[u:v]\in\mathbb{C}P^1\}.\]

\begin{lemma}\label{main prop}
The following are equivalent.
\begin{itemize}
  \item[(1)] The tangent developable surface $\mathcal{D}$ of $F$ lies in $G(2,5)$.
  \item[(2)] ${\partial F}/{\partial z}\wedge {\partial F}/{\partial z}\equiv 0.$ 
\end{itemize}
\end{lemma}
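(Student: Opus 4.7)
The plan is to test the defining Plücker quadric of $G(2,5)\subset\mathbb{C}P^9$, namely $\alpha\wedge\alpha=0$ in $\wedge^4\mathbb{C}^5$, against a generic point of the developable surface $\mathcal{D}$, and to exploit the identity $F\wedge \partial F/\partial z\equiv 0$ obtained by differentiating $F\wedge F\equiv 0$ (already recorded in the excerpt just above the statement of the lemma).

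Fix a local holomorphic coordinate $z$ on $M$ and write a generic point of $\mathcal{D}$ over $z$ as the class of $P(u,v)\triangleq u\,F(z)+v\,F_z(z)\in\wedge^2\mathbb{C}^5$ with $[u:v]\in\mathbb{C}P^1$. Since elements of $\wedge^2\mathbb{C}^5$ commute under the wedge product,
\[
P(u,v)\wedge P(u,v)=u^{2}\,F\wedge F+2uv\,F\wedge F_z+v^{2}\,F_z\wedge F_z.
\]
The first two coefficients vanish identically on $M$, so the right-hand side collapses to $v^{2}\,F_z\wedge F_z$. Thus $\mathcal{D}\subset G(2,5)$, which asks that the left side vanish for all $z\in M$ and all $[u:v]\in\mathbb{C}P^1$, is equivalent to $F_z\wedge F_z\equiv 0$: the implication (1)$\Rightarrow$(2) is obtained by setting $[u:v]=[0:1]$, while (2)$\Rightarrow$(1) is immediate since every surviving coefficient in the expansion then vanishes.

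The main subtlety, rather than a real obstacle, is to confirm that the condition $F_z\wedge F_z\equiv 0$ is intrinsic. It is independent of the choice of holomorphic lift of $i\circ f$, because rescaling $F\mapsto \lambda F$ alters $F_z\wedge F_z$ only by terms that are multiples of $F\wedge F_z$ and $F\wedge F$, both identically zero; and it is independent of the choice of local coordinate $z$ by the chain rule, which again produces only correction terms of those two types. This makes the statement well-posed, and the computation above then finishes the proof.
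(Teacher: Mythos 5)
Your proof is correct and is essentially the paper's own argument: expand $(uF+vF_z)\wedge(uF+vF_z)$, use $F\wedge F\equiv 0$ and its derivative $F\wedge F_z\equiv 0$ to reduce to $v^2\,F_z\wedge F_z$, and read off the equivalence. The added check that the condition $F_z\wedge F_z\equiv 0$ is independent of the lift and of the coordinate is a nice (and correct) touch of rigor, but it goes beyond what the paper records.
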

The lemma follows by differentiating $(u\,F+v\,{\partial F}/{\partial z})\wedge (u\,F+v\,{\partial F}/{\partial z})=0$ while employing $F\wedge {\partial F}/{\partial z}=0$.

Inspired by the first item in Lemma \ref{main prop}, we call a holomorphic curve $f:M\rightarrow G(2,5)$ {\emph{reducible}}, if the tangent developable surface $\mathcal{D}$ of $F= i \circ f$ also lies in $G(2,5)$; otherwise, we call $f$ \emph{irreducible}. 
If $f:M\rightarrow G(2,5)$ is irreducible, then ${\partial F}/{\partial z}\wedge  {\partial F}/{\partial z}$ has isolated zeroes, which we call \emph{ramified points} (with multiplicity) and $f$ is said to be {\em ramified} at these points.

\begin{remark}
In the theory of harmonic sequences, a holomorphic curve $f:M\rightarrow G(2,5)$ is called reducible if the rank of the next term $f_1$ is strictly less than $2$; see \cite{Jiao-YXu2018}. This definition coincides with the above definition. We thank Professor L. He for helpful discussions about it. 
\end{remark}

It was proven in \cite{Fei-Jiao2011} that  a constantly curved reducible holomorphic $2$-sphere of degree $6$ is rigid, which is unitarily equivalent to the {\em standard} Veronese curve \eqref{eq-standard} in $G(2,5)$. As a result, we need only consider irreducible holomorphic $2$-spheres in $G(2,5)$ in the sequel.
\iffalse
{\color{red}Moreover, in our language here, the totally unramified curve is defined equivalently as following.
\begin{definition}\cite[Def 2.4, p.~24]{He-Jiao-Zhou2015}
Let $f:M\rightarrow G(2,5)$ be a holomorphic curve, and $F:=i\circ f:M\rightarrow\mathbb{C}P^9$ be the 
Pl\"{u}cker embedding of $f$. We call $f$ is \emph{totally unramified}, if $f$ has no ramified points and $\frac{\partial F}{\partial z}\wedge \frac{\partial F}{\partial z}:M\rightarrow \mathbb{C}P^4$ is an immersion.
\end{definition}

Using He, Jiao and Zhou's classification, we obtain the following.
\begin{theorem}\cite[Cor 5.4, p.~43]{He-Jiao-Zhou2015}\label{totally unramified no constant curvature}
There is no totally unramified holomorphic two-spheres of constant curvature and degree $6$ in $G(2,5)$
\end{theorem}
}
\fi

\section{Algebro-geometric preparation}
\subsection{Generic linear sections of $G(2,5)$ and Fano 3-folds of index 2 and degree 5}\label{muf3f}~

To motivate, a holomorphic $2$-sphere of degree 6 in $G(2,5)$ lies in a $6$-plane $\bf L$ in $\mathbb{P}(\wedge^2 \mathbb{C}^5)\cong \mathbb{C}P^{9}$, and so it lives in the intersection ${\bf L}\cap G(2, 5)$ called a {\em linear section} of $G(2,5)$. 
The dual $2$-plane of ${\bf L}$ in $(\wedge^2{\mathbb C}^5)^{\ast}$ is given by a linear system
\begin{equation}\label{LS}
[\lambda A+ \mu B + \tau C],~~~\;\;\;\;\;\;~~~[\lambda:\mu:\tau]\in{\mathbb C}P^2,
\end{equation} 
where $A,B,C$ are fixed skew-symmetric matrices of size $5\times 5$ identified with elements in $(\wedge^2{\mathbb C}^5)^{\ast}$. Following \cite{Piontkowski-Van1999}, we say that $\bf L$ is {\em generic} if all matrices in the linear system are of rank 4, and the associated cut ${\bf L}\cap G(2,5)$ is referred to as a {\em generic} linear section. Let us look at a concrete example next. 

By the Clebesch-Gordan formula \eqref{eq-clebsch},  we obtain that $\wedge^2 \mathbb{C}^5 \cong V_6 \oplus V_2$. Here, we identify $V_6$ with a $SL_2$-invariant subspace of $5\times 5$ anti-symmetric matrices by 
{\small
 \begin{align}\label{standard6plane}
 \begin{split}
\sum_{i=0}^{6}\sqrt{\tbinom{6}{i}}a_i u^{6-i}v^i&\mapsto \begin{psmallmatrix}
0 & a_0 & a_1 & \sqrt{\frac{3}{5}}a_2 & \frac{1}{\sqrt{5}}a_3\\
-a_0 & 0 & \sqrt{\frac{2}{5}}a_2  & \frac{2}{\sqrt{5}}a_3 & \sqrt{\frac{3}{5}}a_4\\
-a_1 & -\sqrt{\frac{2}{5}}a_2 & 0 & \sqrt{\frac{2}{5}}a_4 & a_5 \\
-\sqrt{\frac{3}{5}}a_2 & -\frac{2}{\sqrt{5}}a_3 & -\sqrt{\frac{2}{5}}a_4 &0 &a_6\\
-\frac{1}{\sqrt{5}}a_3 & -\sqrt{\frac{3}{5}}a_4 & -a_5 & -a_6 & 0
\end{psmallmatrix}.
\end{split}
\end{align}}
Let $\{e_i\}$ be an orthonormal basis of $\mathbb{C}^5$. An orthonormal basis of $V_6$ is given by 
{\small\begin{align}\label{basisOfV6}
\begin{split}
E_0\triangleq &e_0\wedge e_1,\quad E_1\triangleq e_0\wedge e_2,\quad E_2\triangleq \sqrt{3/5}\,e_0\wedge e_3+\sqrt{2/5}\,e_1\wedge e_2,\\
E_3\triangleq &{1}/{\sqrt{5}}\,e_0\wedge e_4+{2}/{\sqrt{5}}\,e_1\wedge e_3,\quad E_4\triangleq  \sqrt{3/5\,}e_1\wedge e_4+\sqrt{2/5\,}e_2\wedge e_3,\\
E_5\triangleq &e_2\wedge e_4,\quad E_6\triangleq e_3\wedge e_4.
\end{split}
\end{align}}

It is readily checked that $uv(u^4-v^4)$ (respectively, $u^6$) in $V_6$ corresponds to $(E_1-E_5)/\sqrt{6}$ (respectively, $E_0$).
Note that, the dual plane to $V_6$ is given by a linear system of the form in \eqref{LS}, where
\begin{equation}\label{Name}
A\triangleq \sqrt{6}p_{03}-3p_{12}=0,~~~B\triangleq 2p_{04}-p_{13}=0,~~~C\triangleq \sqrt{6}p_{14}-3p_{23}=0.
\end{equation}
It is also readily checked that the rank of $[\lambda A+ \mu B + \tau C]$ is $4$ for every $[\lambda:\mu:\tau]\in{\mathbb C}P^2$. Therefore, as a linear section, 
\[\mathcal{H}_0^3\triangleq {\mathbb P}(V_6)\cap G(2,5),\] is generic. 

Note also that the space ${\mathbb P}(V_6)$ is the $6$-plane spanned by the standard Veronese curve in \eqref{eq-standard}, which is precisely the orbit $PSL_2\cdot u^6$ confirmed by a computation with
$
(E_0,\cdots,E_6)\cdot Z_6(z)
$, where $Z_6$ is given in \eqref{Veronse sphere}, to see that they are agreeable.

We include a short outline of the following well-known fact for the reader's convenience. Our reference is \cite{Piontkowski-Van1999}.

\begin{theorem} \label{thm-equi}
All generic linear sections ${\bf L}\cap G(2,5)$ are  $PGL(5,\mathbb{C})$-equivalent to ${\mathcal H}_0^3$.
\end{theorem}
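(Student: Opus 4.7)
The plan is to use the Pfaffian kernel construction to reduce the projective equivalence of linear sections to the projective equivalence of auxiliary surfaces in $\mathbb{C}P^4$, where a classical rigidity statement can be invoked. Concretely, write the $3$-plane annihilating $\mathbf{L}$ in $(\wedge^2\mathbb{C}^5)^{\ast}$ as the net $\lambda A+\mu B+\tau C$ of \eqref{LS}. The genericity assumption is that every nonzero element of the net has rank $4$, hence a one-dimensional kernel in $\mathbb{C}^5$. Since the $4\times 4$ Pfaffians of a $5\times 5$ skew matrix of rank $4$ produce its kernel vector componentwise, the assignment
\[
\kappa\colon\mathbb{C}P^2\to\mathbb{C}P^4,\qquad[\lambda:\mu:\tau]\mapsto[P_0:-P_1:P_2:-P_3:P_4],
\]
with $P_i(\lambda,\mu,\tau)=\mathrm{Pf}\big((\lambda A+\mu B+\tau C)_{\widehat{i}}\big)$, is a well-defined morphism given by homogeneous quadratic forms.

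Next I would show that $S=\kappa(\mathbb{C}P^2)$ is a nondegenerate rational surface in $\mathbb{C}P^4$, and produce the reference surface $S_0\subset\mathbb{C}P^4$ by applying the same recipe to the model $\mathcal{H}_0^3$ using the $A,B,C$ of \eqref{Name}. Linear independence of the five $P_i$, hence nondegeneracy of $S$, is forced by genericity: any relation $\sum_ic_iP_i\equiv 0$ would yield a fixed vector annihilated by every matrix in the net, contradicting the rank-$4$ condition on a suitably chosen element. Injectivity of $\kappa$ and the absence of collapsed fibers follow from the same rank condition applied to pairs of net elements. The classical theorem underlying the statement, essentially due to Castelnuovo and worked out in detail in \cite{Piontkowski-Van1999}, asserts that any such projected-Veronese-type surface in $\mathbb{C}P^4$ is unique up to $PGL(5,\mathbb{C})$; consequently there exists $g\in PGL(5,\mathbb{C})$ carrying $S$ to $S_0$.

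The final step is to promote this equivalence of auxiliary surfaces to an equivalence of the linear sections themselves. The crucial reconstruction principle is that $\mathbf{L}$ is determined by $S$: the annihilating $3$-plane to $\mathbf{L}$ in $(\wedge^2\mathbb{C}^5)^{\ast}$ is exactly the space of skew forms whose kernel vectors trace out $S$ as the parameter varies over $\mathbb{C}P^2$. Granting this, the induced action of $g$ on $\wedge^2\mathbb{C}^5$ sends $\mathbf{L}$ to $V_6$, hence $\mathbf{L}\cap G(2,5)$ to $\mathcal{H}_0^3$. I expect the main obstacle to be precisely this reconstruction step, which is the algebro-geometric core of why Fano $3$-folds of index $2$ and degree $5$ are projectively rigid, and which is the reason the paper can afford to defer to the detailed treatment in \cite{Piontkowski-Van1999} rather than reproducing a self-contained proof.
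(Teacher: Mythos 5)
Your overall route is the same as the paper's outline: the diagonal-Pfaffian center map sending the net $\lambda A+\mu B+\tau C$ to its projectivized kernels, the identification of the image $S$ with a projected Veronese surface, the projective rigidity of such surfaces, and the deferral of the detailed verifications to \cite{Piontkowski-Van1999}. So the assessment comes down to the two steps you actually sketch arguments for, and both have problems. The lesser one is the nondegeneracy argument: a linear relation $\sum_i c_i P_i\equiv 0$ among the Pfaffian quadrics says only that all kernel points lie in the hyperplane $\sum_i c_i x_i=0$ of $\mathbb{C}P^4$; it does not produce a single vector annihilated by every member of the net (that would force $S$ to be a point, a much stronger degeneration than lying in a hyperplane). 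So that inference is a non sequitur, though nondegeneracy itself is among the facts the reference establishes.

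The serious gap is that your ``crucial reconstruction principle'' is false as stated. The set of skew forms whose kernel lies on $S$ is not a linear space: for a single point $p\in S$, the forms $\omega$ with $\omega(p,\cdot)=0$ already constitute a $6$-dimensional linear space (a $\mathbb{P}^5$ projectively), and the union of these spaces over $p\in S$ is a $7$-dimensional variety that contains, but vastly exceeds, the projective $2$-plane of the net. What your final step needs is either the genuinely nontrivial uniqueness statement that the net is the \emph{only} $2$-plane of rank-$4$ skew forms whose kernel locus equals $S$ --- not obvious, since a rank-$4$ form is far from determined by its kernel --- or else the reconstruction the paper actually invokes: ${\bf L}\cap G(2,5)$ is the closure of the set of lines in $\mathbb{C}P^4$ meeting $S$ in three distinct points. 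With that trisecant characterization the conclusion is immediate: if $g\in PGL(5,\mathbb{C})$ carries $S$ to the model surface $S_0$, it carries trisecants of $S$ to trisecants of $S_0$, hence the induced action of $\wedge^2 g$ carries ${\bf L}\cap G(2,5)$ onto $V_6\cap G(2,5)=\mathcal{H}^3_0$. Substituting this (or a proof of the net-uniqueness claim) for your stated principle repairs the argument; as written, the last step rests on a statement that is not true.
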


To begin, the {\em Pfaffian} of a $(2n)\times(2n)$ skew-symmetric matrix $M$ with entries $a_{ij}$ is defined to be
\[ pf(M)\triangleq \sum_\sigma sgn(\sigma)\, a_{i_1\,j_1}a_{i_2\,j_2}\cdots a_{i_n\,j_n},\]
where $\sigma: \{1,2,\cdots, 2n\}\rightarrow \{i_1,j_1,i_2,j_2,\cdots,i_n,j_n\}$, in order, runs over permutations of $\{1,2,\cdots, 2n\}$ satisfying $i_s<j_s, 1\leq s\leq n,$ and $i_1<i_2<\cdots<i_n$. The Pfaffian enjoys the property that if $N$ is a $(2n+1)\times(2n+1)$ skew-symmetric matrix of rank $2n$, then the 1-dimensional kernel of $N$ is spanned by the vector $(v_1,\cdots,v_{2n+1})$, where $v_i$ is the diagonal Pfaffian of the $(2n)\times(2n)$ skew-symmetric matrix obtained by deleting the $i$th row and column.

Now, since the dual 2-plane of a generic 6-plane $\bf L$ in $\mathbb{P}(\wedge^2({\mathbb C}^5))$ is a linear system $[\lambda A + \mu B +\tau C]$, $[\lambda:\mu:\tau]\in{\mathbb C}P^2$, all of whose $5\times 5$ skew-symmetric matrices are of rank 4, we can use the associated diagonal Phaffians to define the {\em center} map
 $$
{\bf c}:[\lambda:\mu:\tau]\in {\mathbb C}P^2\rightarrow \text{projectivized center of}\; [\lambda A + \mu B +\tau C]\in {\mathbb C}P^4.
 $$
 It is then verified that the center map is an embedding of ${\mathbb C}P^2$ into ${\mathbb C}P^4$ of degree 2, and thus the image of ${\bf c}$, called the {\em projected Veronese surface}, is a generic projection from the standard Veronese surface in ${\mathbb C}P^5$ to ${\mathbb C}P^4$. Consequently, any two such $2$-plane linear systems are $PGL(5,{\mathbb C})$-equivalent, and so are the corresponding linear sections. In fact, ${\bf L}\cap G(2,5)$ is the closure of all lines in ${\mathbb C}P^4$ intersecting the associated projected Veronese surface in three distinct points. 
 
 Exploring the center map $c$, the authors in \cite{Piontkowski-Van1999} also obtained the automorphism group of a generic linear section ${\bf L}\cap G(2,5)$.  \begin{theorem} \label{thm-automorphism}
The automorphism group of a generic linear section ${\bf L}\cap G(2,5)$ is $PSL_2$.  
\end{theorem}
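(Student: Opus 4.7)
The plan is to reduce via Theorem~\ref{thm-equi} to the specific section $\mathcal{H}_0^3=V_6\cap G(2,5)$ and then exploit the center map. The containment $PSL_2\hookrightarrow\mathrm{Aut}(\mathcal{H}_0^3)$ is immediate: by Remark~\ref{rk-plucker}, $G(2,5)$ is $(\rho^4\wedge\rho^4)$-invariant, and since $V_6\subset\wedge^2\mathbb{C}^5$ is a $PSL_2$-subrepresentation (the restricted action being $\rho^6$), the intersection $\mathcal{H}_0^3$ is preserved, producing a copy of $PSL_2$ inside the automorphism group.

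For the opposite inclusion, any biregular $\sigma\in\mathrm{Aut}(\mathcal{H}_0^3)$ must first be lifted to an element of $PGL(5,\mathbb{C})$ preserving both $V_6$ and $G(2,5)$. Since $\mathcal{H}_0^3$ is Fano of index $2$ with $\mathrm{Pic}(\mathcal{H}_0^3)=\mathbb{Z}\cdot H$ and $-K_{\mathcal{H}_0^3}=2H$, its embedding into $\mathbb{P}(V_6)$ is furnished intrinsically by the complete linear system $|H|$, so $\sigma$ extends to a projective linear automorphism of $\mathbb{P}(V_6)$; preservation of the Pl\"ucker relations cut out on $\mathcal{H}_0^3$ then forces this extension to be induced by an element of $PGL(5,\mathbb{C})$. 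Dually, $\sigma$ acts linearly on the defining 2-plane $\{\lambda A+\mu B+\tau C\}\cong \mathbb{P}(V_2)\cong\mathbb{P}^2$, and through the center map $\mathbf{c}:\mathbb{P}^2\to\mathbb{P}^4$ this action is compatible with the induced action on $\mathbb{P}^4=\mathbb{P}(\mathbb{C}^5)$. Thus $\sigma$ descends to a linear automorphism of $\mathbb{P}^4$ preserving the projected Veronese surface $\mathcal{V}=\mathbf{c}(\mathbb{P}^2)$, and the assignment $\sigma\mapsto\sigma|_{\mathbb{P}^4}$ is injective since a $PGL(5,\mathbb{C})$-element acting trivially on $\mathbb{P}^4$ acts trivially on $\wedge^2\mathbb{C}^5$.

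It remains to identify $\mathrm{Aut}(\mathcal{V}\subset\mathbb{P}^4)$. As a $PSL_2$-equivariant construction, $\mathcal{V}$ is the image of $\mathbb{P}(V_2)\to\mathbb{P}(V_4)$ induced by the $PSL_2$-equivariant projection $\mathrm{Sym}^2 V_2=V_4\oplus V_0\twoheadrightarrow V_4$. Any linear automorphism of $\mathbb{P}(V_4)$ preserving $\mathcal{V}$ must come from some $g\in PGL(V_2)=PGL(3,\mathbb{C})$ acting on $\mathrm{Sym}^2 V_2$ and preserving the line $V_0$; such $g$ fixes the invariant quadratic form on $V_2$ up to scalar, hence lies in $PO(V_2,\mathbb{C})=PSL_2$ by the exceptional isomorphism $PO(3,\mathbb{C})\cong PSL_2$. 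Combined with the reverse inclusion, this yields $\mathrm{Aut}(\mathcal{H}_0^3)=PSL_2$. The main obstacle is the lifting step at the start of the second paragraph: rigorously extending a bare biregular automorphism of $\mathcal{H}_0^3$ to an element of $PGL(5,\mathbb{C})$, as opposed to merely to $PGL(V_6)$, requires a careful use of projective normality and the intrinsic nature of the Pl\"ucker relations within $V_6$.
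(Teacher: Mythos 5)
Your strategy---reduce to $\mathcal{H}_0^3$ via Theorem~\ref{thm-equi}, get $PSL_2\subseteq \mathrm{Aut}(\mathcal{H}_0^3)$ from the $\rho^4\wedge\rho^4$-action, and compute the stabilizer of the projected Veronese surface through the center map---is the same center-map route that the paper attributes to \cite{Piontkowski-Van1999}; the paper itself gives no proof of Theorem~\ref{thm-automorphism} beyond that citation. Your endgame is sound: a linear automorphism of $\mathbb{P}(V_4)$ preserving $\mathcal{V}$ restricts to an automorphism of $\mathcal{V}\cong\mathbb{P}(V_2)$ preserving the embedding system, hence fixing the projection center $\mathbb{P}(V_0)\in\mathbb{P}(\mathrm{Sym}^2V_2)$, i.e.\ fixing a nondegenerate conic, so it lies in $PO(3,\mathbb{C})\cong PSL_2$.

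There is, however, a genuine gap, and you have pointed at it yourself: the descent from $PGL(V_6)$ to $PGL(5,\mathbb{C})$. The Fano-index argument legitimately extends an abstract automorphism $\sigma$ to an element of $PGL(V_6)\cong PGL(7,\mathbb{C})$, since $\mathrm{Pic}(\mathcal{H}_0^3)=\mathbb{Z}H$, $-K=2H$, and the embedding is by the complete system $|H|$. But the sentence ``preservation of the Pl\"ucker relations \dots forces this extension to be induced by an element of $PGL(5,\mathbb{C})$'' is an assertion, not an argument. Restricted to $\mathbb{P}(V_6)$, the Pl\"ucker relations are merely five quadrics; an element of $PGL(V_6)$ preserving $\mathcal{H}_0^3$ preserves their $5$-dimensional span, which gives a linear action on a $5$-dimensional vector space, but it does not by itself produce $g\in GL(5,\mathbb{C})$ such that $\wedge^2 g$ preserves $V_6$ and restricts to the given transformation: one must reconstruct the embedding $V_6\hookrightarrow\wedge^2\mathbb{C}^5$ (equivalently, the ambient Grassmannian) intrinsically from the section and verify compatibility. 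That extension statement---every automorphism of the linear section is induced by an automorphism of $G(2,5)$, i.e.\ by $PGL(5,\mathbb{C})$---is precisely the main technical content of the theorem in \cite{Piontkowski-Van1999}. Everything downstream in your proof depends on it: the dual action on the $2$-plane $\{\lambda A+\mu B+\tau C\}$ only makes sense once $\sigma$ sits inside $PGL(5,\mathbb{C})$, because the annihilator of $\mathbf{L}$ lives in $\wedge^2(\mathbb{C}^5)^*$, which is not intrinsic to $\mathbb{P}(V_6)$. So the chain $PSL_2\subseteq\mathrm{Aut}(\mathcal{H}_0^3)\subseteq\mathrm{Stab}_{PGL(5,\mathbb{C})}(\mathcal{V})=PSL_2$ does not close without supplying that step (or an alternative, e.g.\ an intrinsic identification of the Veronese surface inside the space of quadrics through the section, or the action on the Hilbert scheme of lines).
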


Generic linear sections ${\bf L}\cap G(2,5)$ constitute all Fano $3$-folds of index 2 and degree $5$, first classified by Castelnuovo \cite{Castelnuovo1891}, a typical one of which is to be denoted by ${\mathcal H}^3$ henceforth; here, the degree is that of the Fano 3-fold as a subvariety of ${\mathbb C}P^9$, and the index is the difference between its degree and codimension in $G(2,5)$, so that its anti-canonical bundle is $\simeq {\mathcal O}(2)$. To reference, we call ${\mathcal H}_0^3={\mathbb P}(V_6)\cap G(2,5)$ introduced earlier the {\em standard}\, Fano 3-fold.

We point out that the automorphism group of a Fano $3$-fold of index 2 and degree 5 has also been studied by Mukai and Umemura in \cite{Mukai-Umemura1983} from the viewpoint of algebraic group actions. By considering the action of $PSL_2$ on ${\mathbb P}(V_6)$, they proved that the closure of $PSL_2 \cdot uv(u^4-v^4)$ is precisely ${\mathcal H}^3_0$. In the same paper, they also obtained the following beautiful orbit decomposition structure on ${\mathcal H}^3_0$. 

\begin{theorem}%\cite{Mukai-Umemura1983}
\label{OrbitDecomp}
{\small\begin{align*}
\aligned
{\mathcal H}^3_0 =\overline{PSL_2 \cdot uv(u^4-v^4)}
 =PSL_2 \cdot uv(u^4-v^4) \sqcup PSL_2 \cdot u^5v \sqcup PSL_2 \cdot u^6.
\endaligned
\end{align*}}
\end{theorem}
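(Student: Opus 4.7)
The plan is to exploit the irreducibility and dimensionality of $\mathcal{H}_0^3$, which is a Fano $3$-fold by Theorem~\ref{thm-equi}, together with a classification of $PSL_2$-orbits in $\mathbb{P}(V_6)$ by root-multiplicity partitions. Since $\mathcal{H}_0^3$ is a $PSL_2$-invariant, irreducible, $3$-dimensional subvariety of $\mathbb{P}(V_6)$, any $3$-dimensional $PSL_2$-orbit contained in it must be Zariski-dense in it, and the complement is forced to be a union of orbits with positive-dimensional stabilizer. The strategy is therefore first to exhibit the dense orbit, and then to enumerate the (finitely many) orbits whose projective stabilizer has positive dimension and to test which of them lie in $\mathcal{H}_0^3$.

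For the first part, I would verify by direct computation via~\eqref{standard6plane} that $u^6$, $u^5v$, and $uv(u^4-v^4)$ all correspond to decomposable $5\times 5$ skew-symmetric matrices, namely $E_0 = e_0\wedge e_1$, $E_1 = e_0\wedge e_2$, and $\frac{1}{\sqrt{6}}(E_1 - E_5) = \frac{1}{\sqrt{6}}(e_0 + e_4) \wedge e_2$ respectively (for the last, expand $uv(u^4-v^4)=u^5v-uv^5$ in the basis $\{\sqrt{\tbinom{6}{i}}u^{6-i}v^i\}$). The roots $0,\infty,\pm 1,\pm i$ of $uv(u^4-v^4)$ form the vertices of a regular octahedron on $\mathbb{C}P^1$ after stereographic projection, so its stabilizer in $PSL_2$ is the finite octahedral rotation group $\cong S_4$; hence $PSL_2\cdot uv(u^4-v^4)$ is $3$-dimensional, and by irreducibility its closure equals $\mathcal{H}_0^3$.

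For the second part, I would classify binary sextics with infinite projective stabilizer. Any $1$-parameter subgroup of $PSL_2$ is conjugate either to the diagonal torus or to the unipotent radical of a Borel; invariance up to scalar under such a subgroup forces $f$ to be a single monomial (torus case) or $f = u^6$ (unipotent case). Up to the $(u,v)$-swap this leaves the representatives $u^6,\, u^5v,\, u^4v^2,\, u^3v^3$. Since $\mathcal{H}_0^3$ is $PSL_2$-invariant, testing one representative per orbit suffices. The orbits of $u^6$ and $u^5 v$ already lie in $\mathcal{H}_0^3$ by the above, whereas $u^4 v^2$ and $u^3 v^3$ give matrices proportional to $\sqrt{3/5}\,e_0\wedge e_3 + \sqrt{2/5}\,e_1\wedge e_2$ and $\frac{1}{\sqrt{5}}\,e_0\wedge e_4 + \frac{2}{\sqrt{5}}\,e_1\wedge e_3$, each of rank $4$ and violating a Pl\"ucker relation in \eqref{G(2,5)}; hence neither orbit meets $G(2,5)$.

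The crux of the argument will be the enumeration in the second part: one must verify with care that torus and unipotent invariance exhaust the positive-dimensional stabilizer case, so that no subtler orbit of dimension $\leq 2$ escapes detection. Once that enumeration is secured, the disjoint-union decomposition follows by combining the three verifications above with the closure statement.
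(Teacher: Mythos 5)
Your proof is correct, but it takes a route the paper never travels: the paper offers no proof of Theorem~\ref{OrbitDecomp} at all, quoting it as a known result of Mukai and Umemura \cite{Mukai-Umemura1983}, whose own argument rests on the classical invariant theory of binary sextics. Your argument is self-contained and fits the paper's framework. The decomposability computations are exactly right and consistent with \eqref{standard6plane}, \eqref{basisOfV6} and \eqref{companion3}: $u^6\mapsto E_0=e_0\wedge e_1$, $u^5v\mapsto E_1/\sqrt{6}$ with $E_1=e_0\wedge e_2$, and $uv(u^4-v^4)\mapsto (e_0+e_4)\wedge e_2/\sqrt{6}$, so all three orbits lie in $\mathcal{H}_0^3$ by the $PSL_2$-invariance of $V_6$ and of $G(2,5)$ (Remark~\ref{rk-plucker}); the octahedral stabilizer of $uv(u^4-v^4)$ agrees with Remark~\ref{isotropy group}(1), giving a $3$-dimensional orbit which is dense once one knows $\mathcal{H}_0^3$ is irreducible of dimension $3$; and the boundary analysis (positive-dimensional stabilizer $\Rightarrow$ torus or unipotent semi-invariance $\Rightarrow$ monomial or sixth power of a linear form) combined with the rank-$4$ Pl\"ucker failure of $E_2$ and $E_3$ correctly excludes $u^4v^2$ and $u^3v^3$. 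Two small repairs: first, irreducibility and $\dim=3$ are not consequences of Theorem~\ref{thm-equi} as you cite; they come from the independent Castelnuovo/Piontkowski--Van de Ven facts quoted in Section~\ref{muf3f} (generic linear sections are Fano $3$-folds of index $2$ and degree $5$), which is what makes your density step legitimate and non-circular, since those results predate and do not use Mukai--Umemura. Second, under the paper's action convention \eqref{sl2} the sextic fixed up to scalar by the standard upper-triangular unipotent subgroup is $v^6$ rather than $u^6$; this is harmless, as the two are $PSL_2$-conjugate. What your approach buys is an elementary, fully verifiable proof internal to the paper; what the paper's citation buys is brevity, at the cost of sending the reader to the literature.
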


 \begin{remark}\label{f12}
 In the above orbit decomposition,  $PSL_2 \cdot uv(u^4-v^4)$ is of dimension $3$, which is parameterized as
{\small \begin{equation}\label{f}
\aligned
&f_1: PSL_2\mapsto {\mathbb P}(V_6), \quad
[\begin{pmatrix}
a & b\\
c & d
\end{pmatrix}] \mapsto [\begin{pmatrix}
a & b\\
c & d
\end{pmatrix} \cdot uv(u^4-v^4)]=[a_0:a_1:\cdots:a_6],\\
&a_0\triangleq -\sqrt {6}{d}^{5}c+\sqrt {6}d{c}^{5},\quad a_1\triangleq {d}^{4} \left( ad+5\,bc \right) -c^4(5\,ad+bc),\\
&a_2\triangleq -b{d}^{3} \left( ad+2\,bc \right) \sqrt {10}+a{c}^{3} \left( 2\,ad+bc
 \right) \sqrt {10}
, \\&a_3\triangleq {b}^{2}{d}^{2} \left( ad+bc \right) \sqrt {30}-{a}^{2}{c}^{2} \left( a
d+bc \right) \sqrt {30}
,\\
&a_4\triangleq -{b}^{3}d \left( 2\,ad+bc \right) \sqrt {10}+{a}^{3}c \left( ad+2\,bc
 \right) \sqrt {10}
,\\&a_5\triangleq b^4(5\,ad+bc)-{a}^{4} \left( ad+5\,bc \right) ,\quad
a_6\triangleq -\sqrt {6}{b}^{5}a+\sqrt {6}b{a}^{5}.
\endaligned
\end{equation}}
\!\!Similarly, the orbit $PSL_2 \cdot u^6$ is parameterized as 
{\small \begin{equation}\label{companion3}
\aligned
[\begin{pmatrix}a&b\\c&d\end{pmatrix}]\mapsto [d^6:-\sqrt{6}bd^5:\sqrt{15}b^2d^4:-\sqrt{20}b^3d^3:\sqrt{15}b^4d^2:-\sqrt{6}b^5d:b^6].
\endaligned
\end{equation}}
 
 It is precisely the Veronese curve $Z_6$ in \eqref{Veronse sphere}.  Its tangent developable surface constitutes the closure of the $2$-dimensional orbit {\em (}see \cite%[p.~497]
 {Mukai-Umemura1983}{\em )},
  \[\overline{PSL_2 \cdot u^5v}=  PSL_2 \cdot u^5v \sqcup PSL_2 \cdot u^6,\]
where $PSL_2 \cdot u^5v$ has the following parameterization 
 \begin{align}\label{companion2}
\aligned
&f_2: PSL_2\mapsto {\mathbb P}(V_6), \quad
[\begin{pmatrix}
a & b\\
c & d
\end{pmatrix}] \mapsto [\begin{pmatrix}
a & b\\
c & d
\end{pmatrix}\cdot u^5v]=[b_0:b_1:\cdots:b_6],\\
&b_0\triangleq -\sqrt {6}{d}^{5}c,\quad b_1\triangleq {d}^{4} \left( ad+5\,bc \right) ,\quad b_2\triangleq -b{d}^{3} \left( ad+2\,bc \right) \sqrt {10},\\
&b_3\triangleq {b}^{2}{d}^{2} \left( ad+bc \right) \sqrt {30},\quad b_4\triangleq -{b}^{3}d \left( 2\,ad+bc \right) \sqrt {10},\\&b_5\triangleq b^4\,(5ad+bc),\quad
b_6\triangleq -\sqrt {6}{b}^{5}a.
\endaligned
\end{align}

\iffalse

{\color{red}
It is well-known, since $f_2$ is of bidegree $(1,5)$ in $((a,c), (b,d))$, that $[f_2]$ defines a morphism into the tangent developable surface, the closure of  $PSL_2\cdot u^5v$, by
$$
[f_2]: [a:c]\times [b:d]\in {\mathbb C}P^1\times {\mathbb C}P^1\longmapsto [b_0:b_1: \cdots:b_6]\in {\mathbb C}P^6,
$$
which is bijective and ramifies at the diagonal $\Delta$ of the domain, where $\Delta$ is mapped to $PSL_2\cdot u^6$, the $1$-dimensional $PSL_2$-orbit {\rm (}see \cite[p.31]{Ein-Lazarsfeld}{\rm )}.

Similarly, we define $f_2^*$ through $f_2$ by swapping the pair $(a,b)$ and the pair $(c,d)$. Then 
generically $[f_1]=[f_2-f_2^*]$ is a point on the line connecting the two points $[f_2]$ and $[f_2^*]$.
}

\fi

\end{remark}

Meanwhile, using the invariants and covariants of the binary sextic (see \eqref{transvectant}), we remark that the above orbits have another $SL_2$-invariant characterization. %, which is well known 
\begin{proposition}\label{orbits defined by transvectant}
Given $f=\sum_{i=0}^{6}\sqrt{\tbinom{6}{i}}a_i u^{6-i}v^i$ defining $[f]\in \mathbb{P}(V_6)$, we have
\begin{enumerate}
\item[(1)] $[f]$ lies in $\mathcal{H}_0^3=\mathbb {P}(V_6)\cap G(2,5)=\overline{PSL_2 \cdot uv(u^4-v^4)}$ if and only if the $4$-th transvectant $(f,f)_4=0$,

\item[(2)] $[f]$ lies in the closed $2$-dim orbit $\overline{PSL_2 \cdot u^5v}$ if and only if the $4$-th and $6$-th transvectants $(f,f)_4$ and $(f,f)_6$ vanish, and

\item[(3)] $[f]$ lies in the $1$-dimensional orbit $PSL_2 \cdot u^6$ if and only if the $2$nd transvectant $(f,f)_2=0$.
\end{enumerate}

\end{proposition}
For later purposes, we quote the following well known calculations:  $(f,f)_2=\text{Hess}(f)/450$. 
\begin{equation}\label{eQ}
(f,f)_6=
2a_0a_6 - 2a_1a_5 + 2a_2a_4 - a_3^2.
\end{equation}%\begin{proof}
%To prove (1), 
%write 
$$(f,f)_4=\sum\limits_{i=0}^4 \sqrt{\tbinom{4}{i}}t_i u^{4-i}v^i,~~~~~~\quad\text{where}$$ 
%%\begin{align}
\begin{equation}\label{five equations in aj fano}
%%\begin{split}
\aligned
&t_0\triangleq\frac{2}{15}(\sqrt{15}a_0a_4-\sqrt{30}a_1a_3+3a_2^2),\quad t_1\triangleq \frac{\sqrt{6}}{15}(5a_0a_5-\sqrt{15}a_1a_4+\sqrt{2}a_2a_3),\\
&t_2\triangleq \frac{\sqrt{6}}{15}(5a_0a_6-3a_2a_4+2a_3^2),\quad t_3\triangleq \frac{\sqrt{6}}{15}(5a_1a_6-\sqrt{15}a_2a_5+\sqrt{2}a_3a_4),\\\
&t_4\triangleq \frac{2}{15}(\sqrt{15}a_2a_6-\sqrt{30}a_3a_5+3a_4^2).
\endaligned
\end{equation}
%%\end{split}
%%\end{align}
%%By \eqref{G(2,5)}, \eqref{standard6plane} and \eqref{basisOfV6}, we find 
We point out that $\frac{1}{\sqrt{6}}f\wedge f=\sum\limits_{i=0}^4 t_{4-i} e_0\wedge \cdots \wedge \widehat{e_{i}}\wedge \cdots \wedge e_4\in \mathbb{P}(\wedge^4(\mathbb{C}^5))$, where the notation $\widehat{e_{i}}$ means that we omit the term $e_{i}$. %Hence $f$ lies in $G(2,5)$ if and only if $(f,f)_4=0$.

%In regard to (2), on $\mathbb{P}(V_6)$, the $6$-th transvectant is given by 
\iffalse
\begin{equation}\label{eQ}
(f,f)_6=
2a_0a_6 - 2a_1a_5 + 2a_2a_4 - a_3^2.
\end{equation}
Moreover, it is directly checked that \eqref{eQ} vanishes on the closed $2$-dimensional orbit by substituting \eqref{companion3} and \eqref{companion2} into \eqref{eQ}.
\fi
Meanwhile, since ${\mathcal H}_0^3$ and the $5$-quadric $Q_5$ defined by 
\begin{equation}\label{5-quadric}
Q_5\triangleq\{[f]\in {\mathbb P}(V_6): (f,f)_6=0\}
\end{equation}
are both $PSL_2$-invariant in ${\mathbb P}(V_6)$, the closed $2$-dimensional $PSL_2$-orbit is precisely $Q_5\cap {\mathcal H}_0^3$.%, as the former is the only 2-dimensional $PSL_2$-invariant orbit in the orbit decomposition by Theorem \eqref{OrbitDecomp} (see also \cite[Example 2.7.2, p.~32]{PopoviciuDraisma2014} for an algebraic explanation).

Note also that %For (3), note that $450(f,f)_2=\frac{\partial^2 f}{\partial^2 u}\frac{\partial^2 f}{\partial^2 v}-(\frac{\partial^2 f}{\partial u\partial v})^2$ is the Hessian of $f$, which 
$(f,f)_2$ vanishes if and only if $f$ is the $6$-th power of a linear form; see \cite[Prop 5.3, p.~71]{kung-Rota1984} for an algebraic reason. %For instance, $f=(du-bv)^6$ corresponds to the point \eqref{companion3}. 
%\end{proof}

\begin{remark}\label{isotropy group} The isotropy groups of the  two orbits of ${\mathcal H}^3_0$ of dimension $\geq 2$ are given below.\\
{\rm(1)}\,The open orbit $PSL_2\cdot uv(u^4-v^4):$ Its isotropy group at $ uv(u^4-v^4)$ is the projective binary octahedral group of order $24$, isomorphic to $S_4$, consisting of the following elements {\rm (}$\xi\triangleq e^{2k\pi\sqrt{-1}/8},~k
= 0,1,\ldots,3${\rm )}{\rm :}
{\small\[\begin{pmatrix}
  \xi & 0 \\
  0 & 1/\xi \\
\end{pmatrix},~~\begin{pmatrix}
  0 & \xi \\
  -1/\xi & 0 \\
\end{pmatrix},~~
1/\sqrt{2}\cdot\begin{pmatrix}
  1/\xi & -1/\xi \\
  \xi & \xi \\
\end{pmatrix},\]~~ \[1/\sqrt{2}\cdot\begin{pmatrix}
  \sqrt{-1}/\xi & -1/\xi \\
  \xi & -\sqrt{-1}\xi \\
\end{pmatrix},~~ 1/\sqrt{2}\cdot\begin{pmatrix}
  -1/\xi & -1/\xi \\
  \xi & -\xi \\
\end{pmatrix},~~ 1/\sqrt{2}\cdot\begin{pmatrix}
  -\sqrt{-1}/\xi & -1/\xi \\
  \xi & \sqrt{-1}\xi\\
\end{pmatrix}.\]}

\noindent {\rm(2)}\,The $2$-dimensional orbit $PSL_2\cdot u^5v:$ Its isotropy group at $u^5v$ is 
\[{\small\{\begin{pmatrix}
                                                                                           a & 0 \\
                                                                                                0 & 1/a \\
                                                                                              \end{pmatrix}|~a\in\mathbb{C}^{\ast}
  \}~~\mod \pm I_2.}\]

\end{remark}

For later computational purposes, we prove the following.

\begin{lemma}\label{commutediag}
Let $A$ be a matrix in $SL_2$. Then
\begin{equation}\label{usefullemma}
\rho^4(A)\cdot (E_0,E_1,\ldots, E_6)=(E_0,E_1,\ldots, E_6)\,\rho^6(A),
\end{equation}
where the left-hand side with a dot is the $\wedge^2$-action of $\rho^4(A)$ on $V_6\subset \wedge^2(\mathbb{C}^5)$ and the right-hand side without a dot is a matrix multiplication.
\end{lemma}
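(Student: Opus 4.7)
The plan is to reduce the identity to Schur's lemma, and then pin down the precise scalar by matching the two bases via the $sl_2$-lowering chain. By the Clebsch-Gordan decomposition \eqref{eq-clebsch}, the summand $V_6 \subset \wedge^2 V_4 = \wedge^2\mathbb{C}^5$ is an irreducible $PSL_2$-submodule under the representation $\rho^4\wedge\rho^4$, so there is a $PSL_2$-equivariant isomorphism $\Phi:(V_n,\rho^6)\to (V_6\subset \wedge^2\mathbb{C}^5,\,\rho^4\wedge\rho^4|_{V_6})$, unique up to scalar by Schur's lemma. The lemma is then equivalent to the claim $\Phi(e_i^{(6)})=E_i$ for all $i$ (under one consistent normalization of $\Phi$), where $e_i^{(6)}\triangleq \tbinom{6}{i}^{1/2}u^{6-i}v^i$ is the basis of \eqref{basis of repre of su2} and $\{E_i\}$ is the basis recorded in \eqref{basisOfV6}.

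To identify $\Phi$, I would compute highest-weight vectors. Under the Cartan generator $H=\diag(1,-1)\in sl_2$, the representation $\rho^4$ acts on $\{e_0,\ldots,e_4\}$ with weights $4,2,0,-2,-4$, so $e_i\wedge e_j\in \wedge^2\mathbb{C}^5$ has weight $8-2(i+j)$. The unique (up to scalar) vector of weight $6$ is $e_0\wedge e_1=E_0$, which therefore spans the highest-weight line of the $V_6$-summand and matches the highest-weight vector $e_0^{(6)}=u^6$ of the abstract $V_6$. Fix the normalization $\Phi(e_0^{(6)})=E_0$.

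Next I would propagate the identification down the weight chain with the lowering operator $F\in sl_2$, which acts on any $V_n$ by $F\cdot e_k^{(n)}=\sqrt{(n-k)(k+1)}\,e_{k+1}^{(n)}$ and acts on $\wedge^2 V_4$ as a derivation. A short direct computation gives
\[
F\cdot E_0=F e_0\wedge e_1+e_0\wedge F e_1=2\,e_1\wedge e_1+\sqrt{6}\,e_0\wedge e_2=\sqrt{6}\,E_1,
\]
agreeing with $F\cdot e_0^{(6)}=\sqrt{6}\,e_1^{(6)}$; similarly
\[
F\cdot E_1=2\,e_1\wedge e_2+\sqrt{6}\,e_0\wedge e_3=\sqrt{10}\bigl(\sqrt{3/5}\,e_0\wedge e_3+\sqrt{2/5}\,e_1\wedge e_2\bigr)=\sqrt{10}\,E_2,
\]
matching $F\cdot e_1^{(6)}=\sqrt{10}\,e_2^{(6)}$, and the remaining steps $E_2\to E_3\to\cdots\to E_6$ are of the same character, each reproducing the exact $sl_2$-coefficient $\sqrt{(6-k)(k+1)}$ while spitting out precisely the linear combination recorded in \eqref{basisOfV6}. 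This shows $\Phi(e_i^{(6)})=E_i$ for every $i$.

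With the bases matched, the intertwining property of $\Phi$ yields, for every $A\in PSL_2$,
\[
\rho^4(A)\cdot E_i=\Phi\bigl(\rho^6(A)\,e_i^{(6)}\bigr)=\sum_j E_j\,(\rho^6(A))_{ji},
\]
which is exactly \eqref{usefullemma}. The only real bookkeeping step is the verification that the specific basis \eqref{basisOfV6}, normalized so that $\{E_i\}$ is orthonormal in $\wedge^2\mathbb{C}^5$, coincides term-by-term with the $sl_2$-lowering chain starting from $E_0=e_0\wedge e_1$; this is the whole substance of the proof, and it was of course arranged by design when \eqref{basisOfV6} was written down, so no genuine obstacle remains.
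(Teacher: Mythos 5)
Your proof is correct, and it takes a genuinely different route from the paper's. The paper's proof is a one-liner built on the transvectant: it takes the explicit $PSL_2$-equivariant projection $\pi(f\wedge g)=(f,g)_1$ from \eqref{transvectant} mapping $V_4\wedge V_4\to V_6$, and reads the lemma off the commutativity of the square formed by $\rho^4(A)$ acting upstairs and $\rho^6(A)$ acting downstairs. You instead invoke Schur's lemma for the $V_6$-summand supplied by Clebsch--Gordan, anchor the equivariant isomorphism at the unique weight-$6$ line $e_0\wedge e_1=E_0$ in $\wedge^2\mathbb{C}^5$, and then propagate down the $sl_2$-lowering chain; your computations are right at every step (e.g.\ $F\cdot E_2=2\sqrt{3}\,E_3$ and $F\cdot E_4=\sqrt{10}\,E_5$ also check out, matching $\sqrt{(6-k)(k+1)}$ throughout). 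The trade-off is instructive: the paper's argument is shorter because the transvectant machinery is already in its preliminaries, but it leaves implicit precisely the point you verify explicitly --- that the orthonormal basis \eqref{basisOfV6} corresponds term-by-term, with a single common scalar, to the basis $e_i^{(6)}$ of binary sextics under the equivariant identification \eqref{standard6plane}. Without some such check (via the transvectant formula or via your lowering chain), the lemma as stated in the basis $\{E_i\}$ does not follow from equivariance alone, so your version is the more self-contained of the two. One cosmetic caveat: with the paper's convention \eqref{sl2}, where $g$ acts through $g^{-1}$, the roles of raising and lowering (and of highest versus lowest weight) may be interchanged relative to your conventions; this does not affect the argument, since the weight-$\pm 6$ subspace of $\wedge^2 V_4$ is one-dimensional either way and the chain computation is symmetric under reversing the order of the basis.
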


\begin{proof}  Since the Clebsch-Gordon transvectant $\pi\triangleq f\wedge g\rightarrow(f,g)_1$ in~\eqref{transvectant} is $SL_2$-equivariant, we obtain from the commutativity of the diagram

\begin{equation}\label{diagram}
\begin{tikzcd}
V_4 \wedge  V_4\arrow{r}{\rho^4(A)} \arrow{d}{\pi}& V_4 \wedge  V_4\arrow{d}{\pi}\\
V_6  \arrow{r}{\rho^6(A)} & V_6
\end{tikzcd}
\end{equation}
that $\rho^6(A):V_6\rightarrow V_6$ is induced from the $\wedge^2$-action of  $\rho^4({A})$ (see \eqref{action on wedge}). 
\end{proof}

\section{Generic holomorphic $2$-spheres of degree $6$ in $G(2,5)$}\label{Sec4}
In this section, we prove that generic holomorphic $2$-spheres of degree $6$ in $G(2,5)$ are not of constant curvature. Here, a  holomorphic $2$-sphere of degree 6 is called generic if it differs from a  {\em general} (in the sense given in \cite[Condition 3.20]{Takagi-1}) rational normal curve of degree $6$ (a sextic curve) in the standard Fano 3-fold $\mathcal{H}^3_0$ by a transformation in $GL(5,\mathbb{C})$.  
%\subsection{General rational normal curves of degree $d$ in $\mathcal{H}^3_0$}
%In this subsection, 

Firstly, we review some results of general sextic curves in $\mathcal{H}^3_0$ referred to as the  quintic del Pezzo 3-fold and denoted by $V_5$ in \cite{Takagi-2, Takagi-1}. In these two papers, Takagi and Zucconi investigated the moduli space (Hilbert scheme) of sextic curves  in $\mathcal{H}^3_0$. (Their results are more general; we only invoke the special case when the curve degree is $6$.) Let $H^6$ be the Hilbert scheme whose general points parameterize sextic curves  in  $\mathcal{H}^3_0$. The following results (see Corollary~3.10 in \cite{Takagi-2}, Proposition~2.3.1, Proposition~2.3.3 and Proposition~2.3.4 in \cite{Takagi-1}) were proved.   
\begin{proposition} \label{prop-Takagi}
%{\bf (Takagi-Zucconi)} 
The closure of $H^6$ 
%of the Hilbert Scheme of rational normal curves of degree $d$ in $\mathcal{H}^3_0$ 
is an irreducible variety of dimension $12$. Moreover, for a general sextic curve $C_6$ in $\mathcal{H}^3_0$, \\
{\rm (1)} $C_6$ intersects the closure of the $2$-dimensional orbit $\overline{PSL_2\cdot u^5v}$ simply, \\ 
{\rm (2)} there exist at most finitely many bi-secant lines of $C_6$ in $\mathcal{H}^3_0$, and any one of them intersects $C_6$ simply, and\\
{\rm (3)} $Q|_{C_6}$ has no point of multiplicity greater than $2$ for any multi-secant conic $Q$.
\end{proposition} 

It turns out the above proposition can be used to prove that general sextic curves in $\mathcal{H}^3_0$ are totally unramified in the sense of harmonic sequences \cite{Chern-Wolfson1987}, from which we can derive that generic holomorphic $2$-spheres of degree $6$ in $G(2,5)$ are not of constant curvature.  Recall (below Lemma \ref{main prop}) that a holomorphic $2$-sphere $F:\mathbb{C}P^1 \rightarrow G(2,5)$ is unramified if $F'\wedge F'$ is nowhere vanishing, in which case it is called {\em totally unramified} if, furthermore, the curve $[F'\wedge F']: \mathbb{C}P^1 \rightarrow {\mathbb P}(\Lambda^4\mathbb{C}^5)\cong \mathbb{C}P^4$ is unramified as a projective curve, which is equivalent to saying that $F''\wedge F'$ is nowhere parallel to $F'\wedge F'$. 
%$F=f\wedge g:\mathbb{C}P^1 \rightarrow G(2,5)$ is called totally unramified if $\text{Span}\{f,g,f',g'\}$ is of rank $4$ everywhere and ,
Our key observation is the following interesting algebro-geometric characterization of total unramification. 
\begin{theorem}\label{thm-ramichar}
Let $F:\mathbb{C}P^1 \rightarrow G(2,5)$ be a holomorphic $2$-sphere of degree $6$. \\
{\rm (1)} $F'\wedge F'$ is zero at a point $p$ if and only if the tangent line of $F$ at $p$ lies in $G(2,5)$. \\
{\rm (2)} Assume $F'\wedge F'$ is nonzero at $p$. If $[F'\wedge F']$ is ramified at $p$, then there exists a conic $Q$ tangent to $F$ at $p$ such that $Q|_{F}$ has multiplicity no less than $3$ at $p$. 
\end{theorem}
\begin{proof}
The conclusion in (1) follows from $F\wedge F'=0$ so that  $$(F+tF')\wedge (F+tF')=t^2 F'\wedge F',~~~t\in \mathbb{C}.$$ 

For the conclusion in (2), we assume that $F=f\wedge g$. Since $F'\wedge F'$ does not vanish at $p$, we can choose a basis $\{e_1, e_2, \cdots, e_5\}$ of $\mathbb{C}^5$ such that 
%$$F(p)=e_1\wedge e_2,~~~F'(p)=e_1\wedge e_3- e_2\wedge e_4,$$ 
\begin{equation}\label{eq-F01p}
F(p)=e_1\wedge e_2,~~~F'(p)=e_1\wedge e_3- e_2\wedge e_4,
\end{equation}
and 
\begin{equation}\label{eq-F2p}
F''(p)=f''(p)\wedge e_2-2e_3\wedge e_4+e_1\wedge g''(p).
\end{equation}

If $[F'\wedge F']$ is ramified at $p$, then there exist two complex number $\alpha$ and $\beta$ such that  $$(\alpha F'(p) +\beta F''(p))\wedge F'(p)=0.$$ It follows that 
\begin{equation}\label{eq-fg2p}
f''(p), g''(p)\in \{e_1, e_2, e_3, e_4\}.
\end{equation}

Consider the $2$-plane $P_2$ spanned by $\{F(p), F'(p), F''(p)\}$ and its intersection with $G(2,5)$. Using \eqref{eq-F01p}$\sim$\eqref{eq-fg2p}, it is easy to verify that 
$[F(p)+xF'(p)+yF''(p)]$ lies in $G(2,5)$ if and only if 
\begin{equation}\label{eq-ramiconic}
-4y+2x^2+\lambda y^2+\mu xy=0
\end{equation}
for two constants $\lambda$ and $\mu$, which means that the intersection $P_2\cap G(2,5)$ is exactly a conic. We denote this conic by $Q$. 

We choose a local coordinate $z$ near $p$ such that $z(p)=0$. It follows from the Taylor expansion of $F$ at $z=0$ that, near $p$, $F$ can be parameterized as 
$[1:z:\frac{z^2}{2}:\frac{z^3}{3!}: \cdots: \frac{z^d}{d!}],$
with respect to the frame 
$\{F(p), F'(p), F''(p), \cdots, F^{(d)}(p)\}$
on the $d$-plane containing $F$. Substituting $x=z$ and $y=\frac{z^2}{2}$ into the left-hand side of \eqref{eq-ramiconic}, we have 
$$-4y+2x^2+\lambda y^2+\mu xy=z^3(\frac{\mu}{2}+\frac{\lambda}{4}z),$$
which implies $Q|_{F}$ has multiplicity no less than $3$ at $p$. 
\end{proof}
 Proposition~\ref{prop-Takagi}, Theorem~\ref{thm-ramichar}, and an easy construction of a totally unramified sextic curve in $G(2,5)$ whose curvature is not constant,      %%He-Jiao-Zhou's classification in the  totally unramified case \cite[Corollary 5.4, p.~43]{He-Jiao-Zhou2015}, 
imply that a generic holomorphic $2$-sphere of degree $6$ is totally unramified, and so we obtain the main result of this section. 
\begin{theorem}\label{thm-generic}
Generic holomorphic $2$-spheres of degree $6$ %%{\rm(}with $3\leq d\leq 6${\rm)} in $G(2,5)$ 
in $G(2,5)$ are not of constant curvature.   
\end{theorem}

{\section{Galois covering of the holomorphic $2$-spheres of degree $6$ in $G(2,5)$}\label{sec-para}

We see from the preceding section that holomorphic $2$-spheres of degree $6$ with constant curvature in $G(2,5)$ are nongeneric. To understand better how and when the ramification in the sense of harmonic sequences can appear, %the fine structure of such $2$-spheres, 
we look at it from the Galois point of view. We divide the discussion according to whether the curve lies in the closed $2$-dimensional $PSL_2$-orbit as follows. 

\subsection{The case when the curve lies in the closed $2$-dimensional orbit} ~ 

Part of the following theorem is known to algebraists \cite{weyman}. We give a straightforward proof pertaining to our geometric situation here. 
\begin{theorem}\label{lem-case2}
Let $F:\mathbb{C}P^1\rightarrow \mathcal{H}_0^3 $ be a rational normal curve of degree $6$. Assume $F$ lies in the closed $2$-dimensional orbit $\overline{PSL_2\cdot u^5v}$ but does not coincide with the $1$-dimensional orbit. Then $F$ can be lifted to a projective line $\phi:\mathbb{C}P^1\rightarrow \mathbb{C}P^3$ in the diagram

\begin{equation}\label{diagram in quadric}
\begin{tikzcd}
&\mathbb{C}P^3 \arrow[dashed]{d}{f_2}\\
\mathbb{C}P^1  \arrow{r}{F} \arrow{ur}{\phi}& \mathbb{C}P^6,
\end{tikzcd}
\end{equation}
where $f_2$ is given in \eqref{companion2}. Moreover, $F$ intersects the $1$-dimensional orbit.
\end{theorem}

\begin{proof} We give a proof based on the $PSL_2$-invariant theory.

Firstly, we show the existence of the lift $\phi$. Assume that $F=\sum\limits_{i=0}^6 a_i(z)\sqrt{\tbinom{6}{i}}u^{6-i}v^i$, where $a_i(z)$ are polynomials of $z$ with $a_i(z)\neq 0$ because $F$ is linearly full. Then by \eqref{five equations in aj fano}, we obtain %the graph structure
\begin{align}\label{a4,a5,a6graph1}
\begin{split}
a_4&=\frac{\sqrt{10}a_1a_3-\sqrt{3}a_2^2}{\sqrt{5}a_0},~\quad\quad\quad a_5=\frac{\sqrt{30}a_1^2a_3-3a_1a_2^2-\sqrt{2}a_0a_2a_3}{5a_0^2},\\
a_6&=\frac{3\sqrt{10}a_1a_2a_3-3\sqrt{3}a_2^3-2\sqrt{5}a_0a_3^2}{5\sqrt{5}a_0^2}.
\end{split}
\end{align}
By \eqref{eQ} and \eqref{a4,a5,a6graph1}, $F$ lying in the closed $2$-dimensional orbit $\overline{PSL_2\cdot u^5v}$ is equivalent to
\begin{equation}\label{curves lies in closed 2-dim orbit}
0=Q_5=-\frac{9}{5}a_3^2-\frac{2\sqrt{2}(\sqrt{15}a_1^2-9a_0a_2)a_1}{5a_0^2}a_3-\frac{2(8\sqrt{15}a_0a_2-15a_1^2)a_2^2}{25a_0^2}.
\end{equation}
We can directly write down the lift $\phi=[\begin{pmatrix}a&b\\c&d\end{pmatrix}]$ by assigning
\begin{equation}\label{lift when curve in quadric}
a=\frac{\sqrt{6}a_1}{a_0}+\frac{-5\sqrt{10}a_1a_2+15\sqrt{5}a_0a_3}{10a_1^2-4\sqrt{15}a_0a_2},~~~b=\frac{-\sqrt{10}a_1a_2+3\sqrt{5}a_0a_3}{10a_1^2-4\sqrt{15}a_0a_2},
\end{equation}
$c=-1,$ and $d=1$.  In fact, given $\phi\cdot u^5v=(u+av)(u-bv)^5$, we derive that 
\begin{equation}\label{why lift in quadric case is right}
\sum\limits_{i=0}^6 g_i(z)\sqrt{\tbinom{6}{i}}u^{6-i}v^i\triangleq F-a_0 (u+av)(u-bv)^5=0 
\end{equation}
in $\mathbb{C}(a_0,a_1,a_2)[a_3]/(Q_5)$, viewing $a_0,\ldots,a_3$ as independent variables. To see this, by direct computations, $g_0=g_1=0$, and 
$g_i=r_i\cdot Q_5,~2\leq i\leq 6,$
for some polynomials $r_i\in \mathbb{C}(a_0,a_1,a_2)[a_3]$ with degree $\deg_{a_3}(r_i)=i-2,~2\leq i\leq 6$, which can be obtained by Euclid's division algorithm; for example, $r_2=\frac{\text{coeff}(g_2,a_3,2)}{\text{coeff}(Q_5,a_3,2)}$, etc. The only thing to remark is that 
\begin{equation}\label{quadric but not in 1dim}
5a_1^2-2\sqrt{15}a_0a_2\not \equiv 0
\end{equation}
in \eqref{lift when curve in quadric}. Otherwise, $a_2=\frac{\sqrt{15}a_1^2}{6a_0}$, and then by \eqref{curves lies in closed 2-dim orbit}, $0=Q_5=-\frac{(\sqrt{30}a_1^3-18a_0^2a_3)^2}{180a_0^4}$ to yield $a_3=\frac{\sqrt{30}a_1^3}{18a_0^2}$, so that by the graph structure \eqref{a4,a5,a6graph1} we obtain $F=\frac{(\sqrt{6}a_0u+a_1v)^6}{216a_0^5}$, which contradicts the assumption that $F$ does not coincide with the $1$-dimensional orbit. (In the following Remark \eqref{why we find the lift}, we will motivate the choice of $a$ and $b$ given in the lift \eqref{lift when curve in quadric}.) Moreover, $F$ intersects the $1$-dimensional orbit $PSL_2\cdot u^6$ at the zeros of $a+b$.

In conclusion, we have the above commutative diagram \eqref{diagram in quadric}. Next, we show that $\phi(\mathbb{C}P^1)$ is a projective line. We may assume that $a,b,c,d$ are polynomials of an affine coordinate $z$, after factoring out the common denominator. Set $\alpha\triangleq \Gcd(a,c)$, $\beta\triangleq \Gcd(b,d)$, and  $A\triangleq \begin{pmatrix}a/\alpha&b/\beta\\c/\alpha&d/\beta\end{pmatrix}$. Then 
\[\phi\cdot u^5v=\begin{pmatrix}a&b\\c&d\end{pmatrix}\cdot u^5v=(A\begin{pmatrix}\alpha&0\\0&\beta\end{pmatrix})\cdot u^5v=(\beta^5\alpha) \;A\cdot u^5v=A\cdot u^5v,\]
after projectivizing. We may thus assume that $\Gcd(a,c)=\Gcd(b,d)=1$ and $\phi=A$ in the following arguments.

By \eqref{companion2} and that $F$ is nondegenerate in ${\mathbb C}P^6$, none of $a,b,c,d$ are identically zero, from which there induces
two non-constant holomorphic maps 
\begin{equation*}
\phi_1:\mathbb{C}P^1\rightarrow \mathbb{C}P^1,~z\mapsto [a:c];\quad\quad\quad\quad\phi_2:\mathbb{C}P^1\rightarrow \mathbb{C}P^1,~z\mapsto [b:d].
\end{equation*}
Moreover, the coordinates $b_0, b_1, \cdots, b_6$ of $F$ given in \eqref{companion2} cannot vanish simultaneously at any point of $\mathbb{C}$. Therefore, 
{\small
\begin{equation} \label{eq-deg}
\deg(F)=\max\limits_{0\leq i\leq 6}\{\deg(b_i)\}\leq \max\{\deg a,\deg c\}+5\max\{\deg b,\deg d\}=\deg(\phi_1)+5\deg (\phi_2),
\end{equation}}

\noindent where we have used the fact that $b_i$ are homogeneous of bidegree $(1,5)$ in $(a,c)$ and $(b,d)$, respectively. We assert that the reverse inequality of \eqref{eq-deg} also holds. To this end,  
multiplying a matrix from the left by interchanging the rows, we may assume that $\deg(a)\geq \deg(c)$. If $\deg(b)\geq \deg(d)$, then 
\[\deg(F)\geq \deg(b_6)=\deg(a)+5\deg(b)=\deg(\phi_1)+5\deg (\phi_2);\]
otherwise,
$\deg(F)\geq \deg(b_1)=\deg(a)+5\deg(d)=\deg(\phi_1)+5\deg (\phi_2)$.
Hence, we have the equality in \eqref{eq-deg}. Lastly, since both $\phi_1$ and $\phi_2$ are non-constant, it follows from $\deg(F)=6$ that $\deg(\phi_1)=\deg(\phi_2)=1$. Therefore $\phi$ is a projective line in $\mathbb{C}P^3$. 
                                                                                          
\iffalse

{\bf Claim}. There exist a matrix $B\in SL_2$ and nonzero polynomials $\alpha,\beta$ in $z$, such that
\[
\begin{pmatrix}a&b\\c&d\end{pmatrix}=BA(z)\begin{pmatrix}\alpha&0\\0&\beta\end{pmatrix},\]
where $A:\mathbb{C}P^1\rightarrow \mathbb{C}P^3$ is of degree $1$. 
Note that if this claim holds, then 
$\psi(z)\triangleq [BA(z)]$ defines a line in ${\mathbb C}P^3$ satisfying $F=f_2\circ \psi=f_2\circ \phi$, which completes the proof of Lemma~\ref{lem-case2}.
\fi

\end{proof} 
\begin{remark}\label{why we find the lift}
It follows from the $PSL_2$-invariant theory that the curve $F$ lying in the closed $2$-dimensional orbit is equivalent to $F$ and $\frac{\partial F}{\partial u}$ having a greatest common divisor $G$ of positive degree in $u$. Indeed, their resultant with respective to $u$ is $\text{Res}_{u}(F,\frac{\partial F}{\partial u})=62208a_0v^{30}Q_5^5=0$. Moreover, 
$G$ can be found by Eculid's algorithm through $F=(\gamma u+\mu v) \frac{\partial F}{\partial u}+G$, 
where
\[ G=\frac{(2\sqrt{15}a_0a_2-5a_1^2)v^2}{6a_0}u^4-\frac{(a_1a_2\sqrt{2}-3a_0a_3)\sqrt{5}v^3}{3a_0}u^3+\cdots\triangleq c_4u^4v^2+c_3u^3v^3+\cdots.\]
So, $G$ is of degree $4$ in $u$ by \eqref{quadric but not in 1dim}. 
The proof that ${\partial F}/{\partial u}$ is divided by $G$, and $G$ has a root $b$ of multiplicity $4$ is similar to \eqref{why lift in quadric case is right}. Thus, by the relations between roots and coefficients for $G$, we derive $b=\frac{-c_3}{4c_4}$ given in \eqref{lift when curve in quadric}.
%%\[b=\frac{(\frac{(a_1a_2\sqrt{2}-3a_0a_3)\sqrt{5}}{3a_0})}{4\cdot \frac{(2\sqrt{15}a_0a_2-5a_1^2)}{6a_0}}.\]
Moreover, $b$ is also the root of $F$ with multiplicity $5$, and the simple root $-a$ of $F$ can also be found through $-(-a)-5b=-\frac{\sqrt{6}a_1}{a_0}$.
\end{remark}
\iffalse
{\color{blue}
\begin{remark}\label{why we find the lift}
It is obvious that %We claim that 
the curve $F=(du-bv)^5(av-cu)$ (i.e., lives in the closed $2$-dimensional orbit)  if and only if the greatest common divisor $G$ of $F$ and $\frac{\partial F}{\partial u}$ equals $(du-bv)^4$.  On the other hand, in terms of $\{a_0, \cdots, a_6\}$, we can use the Eculid's algorithm to calculate that %Indeed, their resultant with respective to $u$ is $\text{Res}_{u}(F,\frac{\partial F}{\partial u})=62208a_0v^{30}Q_5^5=0$. Moreover, 
%In fact, $G$ can be found by Eculid's algorithm through $F=(\gamma\cdot u+\mu)\cdot \frac{\partial F}{\partial u}+G$, 
%where
\[ G=\frac{(2\sqrt{15}a_0a_2-5a_1^2)v^2}{6a_0}u^4+(\frac{-(a_1a_2\sqrt{2}-3a_0a_3)\sqrt{5}v^3}{3a_0})u^3+\cdots.%\triangleq c_4u^4+c_3u^3+\cdots.
\]
 %So, $G$ is of degree $4$ in $u$ by \eqref{quadric but not in 1dim}. 
%The proof that ${\partial F}/{\partial u}$ is divided by $G$, and $G$ has a root of multiplicity $4$ is similar to \eqref{why lift in quadric case is right}. 
Thus, by setting $d=1$, using the relations between roots and coefficients for $G$, we derive the expression of $b$ as %$b=\frac{-c_3}{4c_4}$ 
given in \eqref{lift when curve in quadric}. 
%%\[b=\frac{(\frac{(a_1a_2\sqrt{2}-3a_0a_3)\sqrt{5}}{3a_0})}{4\cdot \frac{(2\sqrt{15}a_0a_2-5a_1^2)}{6a_0}}.\]
Then $a$ can be solved from the relations between roots and coefficients for $F$.  %by setting $c=-1$
%Moreover, $b$ is also the root of $F$ with multiplicity $5$, and the simple root $-a$ of $F$ can also be found through $-(-a)-5b=-\frac{\sqrt{6}a_1}{a_0}$.
\end{remark}
}
\fi

\subsection{The case when the curve does not lie in the closed $2$-dimensional orbit}\label{sec5.2}~

We identify the projectivization of the space of $2\times 2$ nonzero (complex) matrices with ${\mathbb C}P^3$ by 
$$\iota: \begin{pmatrix}a&b\\c&d\end{pmatrix} \mapsto [a:b:c:d].$$%} %\end{tiny}.
Via $\iota$, the subset of $2\times 2$ matrices of zero determinant is the following $PSL_2$-invariant hyperquadric $Q_2$ of dimension $2$,
\begin{equation}\label{2q}
Q_2\triangleq \{[a:b:c:d]\in{\mathbb C}P^3~|~ ad-bc=0\}.
\end{equation}
Note that we can identify $PSL_2$ with $\mathbb{C}P^3 \setminus Q_2$. 
\begin{theorem}\label{lift of curve not in 2dim orbit}
Let $F:\mathbb{C}P^1\rightarrow \mathcal{H}_0^3\subset G(2,5)$ be a sextic curve. If $F$ does not lie in the closed $2$-dimensional orbit $\overline{PSL_2\cdot u^5v}$, then there exists a compact Riemann surface $g:M\rightarrow \mathbb{C}P^3$ covering $F$ as in the following commutative diagram
\begin{equation}\label{diagram2}
\begin{tikzcd}
M\arrow{r}{g} \arrow{d}{\varphi}&\mathbb{C}P^3 \arrow[dashed]{d}{f_1}\\
\mathbb{C}P^1  \arrow{r}{F} & \mathbb{C}P^6
\end{tikzcd}
\end{equation}
Moreover, $\varphi:M\rightarrow \mathbb{C}P^1$ is a {\rm (}branched{\rm )} Galois covering, and the group of covering transformations $G\triangleq\{\sigma\in \Aut(M)~|~\varphi\circ\sigma=\varphi\}$ is a subgroup of $S_4$ isomorphic to the isotropy group at $uv(u^4-v^4)$ given in item {\emph{(1)}} of Remark \eqref{isotropy group}.
\end{theorem}

\begin{proof}
Recall the invariant quadric $Q_5$ defined in \eqref{eQ}, which cuts the sextic curve $F$ in a divisor of degree $12$ with support points $q_1,\cdots,q_l$ by Bezout's theorem. 

{In the following, we abuse the notation to denote by $q$ either a point of the curve $F(\mathbb{C}P^1)$ or its preimage on $\mathbb{C}P^1$,}  whenever there is no possibility of confusion.

Consider the complementary set 
$V\triangleq {\mathbb C}P^1\setminus\{q_1,\ldots,q_l\}$; $F(V)$ lies in the open $3$-dimensional orbit $Y\triangleq PSL_2\cdot uv(u^4-v^4)$. Let 
$U$ be a connected component of the fibered product 
\begin{equation}\label{U}
U\subset V\times_Y PSL_2\triangleq \{(p,B)\in V\times PSL_2 : F(p)=f_1(B)\},
\end{equation}
with the two standard projections $\pi_1$ and $\pi_2$ onto $V$ and $PSL_2\subset {\mathbb C}P^3$, respectively. 
Then $U$ is an unramified covering space of $V$, by item (1) of Remark \ref{isotropy group}. We extend $\pi_1:U\rightarrow V$ to a ramified covering $\varphi:M\rightarrow \mathbb{C}P^1$ by the monodromy representation \cite[Theorem 8.4, p.~51]{Forster1999}, where $M$ is a compact Riemann surface. Hence, we obtain the commutative diagram \eqref{diagram2}, where $g$  extends $\pi_2$, $\varphi$ extends $\pi_1$, and $M$ is the desingularization of the closure of $\pi_2(U)$ in ${\mathbb C}P^3$.

Furthermore, the group of covering transformations $G=\{\sigma\in \Aut(M)~|~\varphi\circ\sigma=\varphi\}$ is isomorphic to the group
\[\widetilde{G}=\{C\in S_4~|~\forall ~(q,B)\in U\subset V\times PSL_2,~\text{s.t.}~(q,B C)\in U\}.\]
It is easy to see that the elements of the group $\widetilde{G}$ permutes the points on a regular fiber of $\varphi$; thus, we obtain the isomorphism
%%\begin{align*}
$$
\widetilde{G}\rightarrow G,\quad C\mapsto \sigma_C\triangleq [(q,B)\in U \mapsto (q, B C^{-1})],
%%\end{align*}
$$
%where the dot is the multiplication in $PSL_2$. 
whose inverse is given by 
%%\begin{align*}
$$
G\rightarrow \widetilde{G}\subset S_4,\quad \sigma\mapsto C_\sigma\triangleq g(\sigma(q))^{-1} g(q),~~\forall q\in U,
%%\end{align*}
$$
where $C_\sigma$ is well-defined due to that $U$ is connected and the isotropy group $S_4$ is finite.

Furthermore, the order of $\widetilde{G}$ equals $d\triangleq\deg \varphi$, the number of points on a regular fiber. Indeed, given a point $(q_0,B_0) \in U$, the fiber over $q_0$ is
\[\{(q_0, B_0 C_i),~|~ C_i\in S_4,~1\leq i\leq d\}.\]
By definition, we have $\widetilde{G}\subset \{C_1,\ldots,C_d\}$.
On the other hand, for a given $1\leq j\leq d$, since $U$ and $U\cdot C_j\triangleq \{(q, B C_j)~|~\forall~(q,B)\in U\}$ are two connected components of the fiber product $V\times_Y PSL_2 $ through the same point $(q_0, B_0 C_j)$ and so are identical, we conclude that $C_j\in \widetilde{G}$.

To show the Galoisness of $\varphi$, given the data in \eqref{f} and \eqref{diagram2}, consider the polynomial equation
\begin{equation}\label{splitting}
p(z,x) \triangleq \sum\limits_{i=0}^6\sqrt{\tbinom{6}{i}}a_i(z)x^i=0,
\end{equation}
where the entries of $F(z)=[a_0(z):\cdots:a_6(z)]$ belong to the  polynomial ring ${\mathbb C}[z]$ such that the coefficients are relatively prime with the maximum degree $6$. Then the splitting field of $p(\varphi,x)\triangleq \varphi^*(p(z,x))$ over $\mathbb{C}(\varphi)$, where $\varphi$ is given in \eqref{diagram2}, is exactly the function field ${\mathbb C}(M)$ of the covering $M$. 

To see this, the splitting field belongs to ${\mathbb C}(M)$ due to that the six roots of $p(\varphi,x)$ are linear fractions of the coordinate functions $a, b, c, d$ of $g(M)$. In fact, the map $g$ over $M$ in \eqref{diagram2} splits $f_1\circ g=u^6\cdot p(\varphi,\frac{v}{u})$ (see \eqref{splitting}) into
\begin{equation*}\label{Galois}
\aligned
&f_1\circ g=\begin{pmatrix}a&b\\c&d\end{pmatrix}\cdot uv(u^4-v^4)=(du-bv) (av-cu)\big((du-bv)-(av-cu)\big)\times \\
&\big((du-bv)+(av-cu)\big)\big((du-bv)+\sqrt{-1}(av-cu)\big)\big((du-bv)-\sqrt{-1}(av-cu)\big),
\endaligned
\end{equation*}
where the six distinct roots are
\begin{equation}\label{six}
\sigma_1\triangleq\frac{d}{b},\; \sigma_2\triangleq\frac{c}{a},\;\sigma_3\triangleq\frac{c+d}{a+b},\;\frac{c-d}{a-b},\;\frac{c+\sqrt{-1}\,d}{a+\sqrt{-1}\,b},\;\frac{c-\sqrt{-1}\,d}{a-\sqrt{-1}\,b}.
\end{equation}
Note that the denominators of the six roots can never be identically zero, since either of them being identically zero would imply $a_6=ab(a^4-b^4)=0$ {\rm{(}}the last coordinate in \eqref{f}{\rm{)}}, contradicting that $F$ is linearly full.

Conversely, let ${\bf F}\supset {\mathbb C}(\varphi)$ be any intermediate field of the function field of $M$. If  ${\bf F}$ contains the splitting field of \eqref{splitting}, then we can use the first three roots in \eqref{six} to solve for $\frac{b}{a}=\frac{\sigma_2-\sigma_3}{\sigma_3-\sigma_1}$ so that the curve $g:M\rightarrow \mathbb{C}P^3$ is given by
\begin{equation}\label{coordinatize}
[a:b:c:d]= [1:\frac{b}{a}:\frac{c}{a}: \frac{d}{b}\cdot \frac{b}{a}]=[1:\frac{\sigma_2-\sigma_3}{\sigma_3-\sigma_1}:\sigma_2:\sigma_1\frac{\sigma_2-\sigma_3}{\sigma_3-\sigma_1}].
\end{equation}
Thus ${\bf F}$ contains $\mathbb{C}(\varphi)(a,b,c,d)={\mathbb C}(M)$ since $g:M\rightarrow  \mathbb{C}P^3$ is generically injective. 

We conclude that the covering $\varphi:M\rightarrow \mathbb{C}P^1$ is Galois of order $d=[{\mathbb C}(M):{\mathbb C}(\varphi)]$, and the group of covering transformations of $\varphi$ is the Galois group $\Aut(\mathbb{C}(M)/\mathbb{C}(\varphi))$.

\iffalse

Combine the above discussions, we see that $G$ acts on the fibers over $V$ transitively, thus the cover $\varphi$ is Galois (see in \cite[8.6. Def, p.~52; 8.12. Thm, p.~57]{Forster1999}).

\fi

\end{proof}}

For the lift $g=\begin{pmatrix}
a & b \\
c & d
\end{pmatrix}:M\rightarrow \mathbb{C}P^3$ as in Theorem \ref{lift of curve not in 2dim orbit}, to be referred to as a Galois lift of $F$, we associate it with two meromorphic functions
\[x\triangleq c/a,~~~w\triangleq b/a.\]
We employ the geometry of the octahedron to study the Galois covering $\varphi$.

The quadric $Q_2$ defined in \eqref{2q} is a saddle surface in ${\mathbb C}P^3$ isomorphic to $\mathbb{C}P^1\times \mathbb{C}P^1$ by the parametrization $\begin{pmatrix}
1 & w\\
x & wx
\end{pmatrix}$, where each pair $(w,x)$ determines uniquely a point $p\in Q_2$, through which there passes a unique $w$-ruling $L_{w}\triangleq\{(w,x)~|~x\in\mathbb{C}P^1\}$. Let $S^2$ be the unit $2$-sphere centered at the origin in ${\mathbb R}^3$, and let ${\mathbb C}$ be the complex plane projected onto by the stereographic projection $\eta: S^2\setminus\{(0,0,1)\}\longrightarrow {\mathbb C}$, with $(0,0,1)$ mapped to $\infty$. We identify the regular octahedron in $S^2$ by  sending its top and bottom vertices to $(0,0,1)$ and $(0,0,-1)$, respectively, and identifying the four horizontal vertices with $(\pm 1, 0,0), (0,\pm 1,0)$. 

It is well-known that the symmetric group $S_4$ is isomorphic to the projective binary octahedral group, which acts on the regular octahedron as the rotational group of symmetry, consisting of the identity, $6$ quarter turns and $3$ half turns around the axes passing through two opposite vertices (see the $6$ blue points in Figure \ref{Octahedron}), $6$ half turns around the axes passing through two opposite edge centers (see the $12$ red points in Figure \ref{Octahedron}), and $8$ one-third turns around the axes passing through the centers of two opposite faces (see the $8$ green points in Figure \ref{Octahedron}). The above $26$ points (to be called centers in the following) enumerate all points on the regular octahedron whose stabilizers are nontrivial under the above group action. 

\begin{figure}[htbp]
\centering
\includegraphics[width=0.25\textwidth]{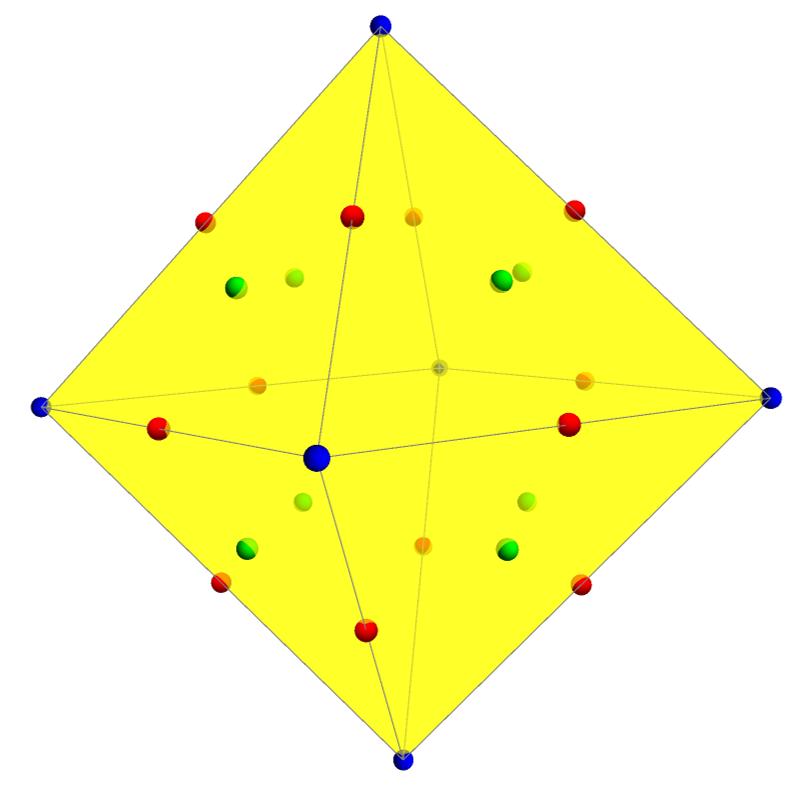}
\caption{Octahedron}\label{Octahedron}
\end{figure}

Composing the central projection of the regular octahedron on $S^2$ with the stereographic projection, we can build a one-to-one correspondence between the points on the octahedron and points on the extended plane $\mathbb{C}\cup \{\infty\}$. Under this correspondence, the above $26$ centers turn out to be the roots of the polynomial equation 
\begin{equation}\label{26}
(w^5-w)(w^8+14w^4+1)(w^{12}-33w^8-33w^4+1)=0,
\end{equation}
where we also count $w=\infty$ as a root. We point out that the above three polynomial factors take exactly the vertices, edge centers, and face centers as their roots, respectively. Furthermore, via this correspondence, the symmetric group $S_4$ (isomorphic to the projective binary octahedral group) acts on the $w$-rulings of $Q_2$, and is exactly the action of the isotropic group in item (1) of Remark \eqref{isotropy group} on  $\begin{pmatrix}
1 & w\\
x & wx
\end{pmatrix}$
by matrix multiplication on the right, where the $26$ centers are related to the $26$ distinct eigenvectors of these isotropy matrices. %%in item (1) of Remark \eqref{isotropy group}. 

Now, we introduce two important divisors to study the Galois covering~\eqref{diagram2}. In the following, we denote the degree of the covering $\varphi:M\rightarrow \mathbb{C}P^1$ by $d$, and the degree of the curve $g:M\rightarrow \mathbb{C}P^3$ by $k$.

Let ${\mathcal Q}$ be the intersection divisor defined by $g$ and the quadric $Q_2$. %%defined by $ad-bc=0$ in $\mathbb{C}P^3$, 
By  Bezout's theorem, we have $\deg({\mathcal Q})=2k$. %We also use $\supp \mathcal{Q}$ (resp. $\supp \mathcal F$) to denote the support of divisor $\mathcal Q$ (resp. $\mathcal F}$). 

In the following, we say that a hypersurface $G=0$ of degree $t$ in ${\mathbb C}P^6$ is {\emph{generic}} if it does not contain the curve $F$ and it cuts out a divisor on $F$ whose support lives in $V=\mathbb{C}P^1\setminus F^{-1}(Q_5)$. Projective normality  \cite[pp.230-231]{Miranda1995}) of the rational normal curve $F$ warrants the existence of generic hypersurfaces. 

 A generic hyperplane $H=\sum_{i=0}^6 c_i\, a_i=0$ in ${\mathbb C}P^6$ with coordinates $[a_0: \cdots:a_6]$ cuts $\gamma=F(M)$ in a divisor $D_H$ of degree $6$ whose support lies in $V$, while $f_1$ pulls the hyperplane $H=0$ back to a hypersurface of degree $6$ in ${\mathbb C}P^3$ that cuts $g$ in a divisor ${\mathcal D}$ of degree $6k$ by Bezout's theorem. Since $\varphi|_U$ is a covering map of degree $d$ over $V$, the divisor ${\mathcal D}$ contains the pullback divisor ${\mathcal D}_0\triangleq \varphi^\ast(D_H)$ of degree $6d$. Define their difference by ${\mathcal F}$, 
\begin{equation}\label{computationsInterDiv}
{\mathcal F}\triangleq {\mathcal D}-{\mathcal D}_0.
\end{equation}
In the following, we denote the support of a divisor $\bf D$ by $\supp {\bf D}$. 

\begin{remark}\label{rk-diviF}
$\mathcal{F}$ is the fixed part of the intersection divisors of $g$ with the hypersurfaces of degree $6$ obtained by the coordinates of $f_1$ given in \eqref{f}, namely, 
\begin{equation}
\mathcal{F}=\min\limits_{0\leq i\leq 6}\{g^\ast(a_i\circ f_1)\}.
\end{equation}
Moreover, let $G=0$ be a generic hypersurface of degree $t$ in ${\mathbb C}P^6$ not containing the curve $F$. Then 
\begin{equation}\label{General difference between degt divisors}
g^{\ast}(G\circ f_1)=\varphi^{\ast}(F^\ast G)+t\mathcal{F}.
\end{equation}
\end{remark}

 \begin{proposition}\label{charc on 2dim orbit}
 Let $F:\mathbb{C}P^1\rightarrow \mathcal{H}_0^3\subset \mathbb{C}P^6$ be a sextic curve not lying in the closed $2$-dim orbit $\overline{PSL_2\cdot u^5v}$, and $g:M\rightarrow \mathbb{C}P^3$ be the Galois lift of $F$ in the commutative diagram \eqref{diagram2}. Then
 \begin{equation}\label{FlessthenOrEqualQ}
\mathcal{F}\leq \mathcal{Q}.
\end{equation}
%For any $p$ lies in the support of $\mathcal{Q}$, we have $(f_1\circ g) (p)$ belongs to the closed $2$-dim orbit $\overline{PSL_2\cdot u^5v}$. 
{
Moreover, for any given $p\in \supp \mathcal Q$, 
\begin{enumerate}
\item[(1)] if $w(p)$ is not associated with any of the $6$ vertices, then $\ord_p(\mathcal{F})=0$ and $f_1\circ g (p)$ lies in the $1$-dim orbit $PSL_2\cdot u^6$, and
\item[(2)] if $w(p)$ is associated with one of the $6$ vertices, then $\ord_p(\mathcal{F})>0$ and $f_1\circ g (p)$ lies in the $1$-dimensional {\rm(}respectively, $2$-dimensional{\rm)} orbit if and only if $\ord_p(\mathcal{F})<\ord_p(\mathcal{Q})$ {\rm(}respectively, $\ord_p(\mathcal{F})=\ord_p(\mathcal{Q})${\rm)}. 
\end{enumerate}
}
%\begin{enumerate}
%\item[(2.1)] $1$-dim orbit if and only if $\ord_p(\mathcal{F})<\ord_p(\mathcal{Q})${\emph{;}}
%\item[(2.2)] open $2$-dim orbit $PSL_2\cdot u^5v $ if and only if $\ord_p(\mathcal{F})=\ord_p(\mathcal{Q})$.
%\end{enumerate}
 \end{proposition}
 \begin{proof}
 
Recall the quadratic $Q_5$ in \eqref{eQ}. Via $f_1$ in \eqref{f} we have the remarkable $SL_2$-invariant identity 
\begin{equation}\label{remarkable}
Q_5=2a_0a_6 - 2a_1a_5 + 2a_2a_4 - a_3^2 = (ad-bc)^6. 
\end{equation}
Therefore, we derive that the support of $\mathcal{F}$ is contained in that of $\mathcal{Q}$, since the former one can be further determined (see Remark~\ref{rk-diviF}) by setting the coordinate functions zero, i.e., 
\begin{equation}\label{eeQ}
a_i\circ f_1\circ g =0,\quad 0\leq i\leq 6. 
\end{equation}
%where $[a_0:\cdots: a_6]$ are given in \eqref{f}. Indeed, assume $p$ is a point in the support of $\mathcal{F}$ such that one of the functions in 
%\eqref{eeQ} is not zero at $p$. 
%It would follow that $f_1\circ g(p)$, as a well-defined point in ${\mathbb C}P^6$, is one of $F\cap Q_5$ on the one hand, so that it never lives in a generic hyperplane $H=0$ (we choose generic $H$ to avoid $F\cap Q_5$), while on the other it lives in the generic hyperplane $H=0$ since $(f_1\circ g)^{*}(H)$ nullifies $p$, a contradiction.

Suppose that $p\in \supp \mathcal Q$, i.e., $g(p) \in Q_2$. 
By the action of $PSL_2$ on $\mathcal{H}_0^3$, we may assume 
\begin{equation}\label{simple form not free lines}
g(p)=\begin{pmatrix}
1 & w \\
0 & 0
\end{pmatrix}.
\end{equation}

If $w(p)$ is not associated with any of the $6$ vertices, i.e., $w^4\neq 0,1,\infty$, then let $U_p$ be a chart around $p$ with local coordinate $s$ and $s(p)=0$. From \eqref{f} and $\ord_p(c),\ord_p(d)\geq 1,$ %(see \eqref{simple form not free lines})
we obtain $0=\ord_p(a_6)<\min\limits_{0\leq i\leq 5}\{\ord_p(a_i)\}$; thus $f_1\circ g(p)=[0:0:0:0:0:0:1]$ lies in the $1$-dimensional orbit $PSL_2\cdot u^6$, whence
\[\ord_p(\mathcal{F})=\min\limits_{0\leq i\leq 6}\{\ord_p(a_i)\}=\ord_p(a_6)=0<\ord_p(ad-bc)=\ord_p({\mathcal Q}).\]

Next, we assume $w^4=0,1,$ or $\infty$. By the action of $PSL_2$ and the isotropy group of $u^6$ (see Remark \eqref{isotropy group}) on $\mathcal{H}_0^3$, we may assume that 
\begin{equation}\label{simple form of g}
g(p)=\begin{pmatrix}
1 & 0 \\
0 & 0
\end{pmatrix}.
\end{equation}
Let $U_p$ be a chart around $p$ with local coordinate $s$ and $s(p)=0$. By \eqref{f} and the property $\ord_p(b),\ord_p(c),\ord_p(d)\geq 1$ %(see \eqref{simple form of g})
 , we obtain that if $\ord_p(d)\leq \ord_p(b)$, then $\ord_p(a_5)<\ord_p(a_j)$ for any $j\neq 5$, so that $f_1\circ g(p)=[0:0:0:0:0:1:0]$ lies in the open $2$-dimensional orbit, whence
 \[\ord_p(\mathcal{F})=\min\limits_{0\leq i\leq 6}\{\ord_p(a_i)\}=\ord_p(a_5)=\ord_p(d)=\ord_p(ad-bc)=\ord_p({\mathcal Q})>0.\] 
On the other hand, if $\ord_p(d)>\ord_p(b)$, then $\ord_p(a_6)<\ord_p(a_k)$ for $0\leq k\leq 5$, so that $f_1\circ g(p)=[0:0:0:0:0:0:1]$ lies in the $1$-dimensional orbit to yield
 \[\ord_p(\mathcal{F})=\ord_p(a_6)=\ord_p(b)<\ord_p(ad-bc)=\ord_p({\mathcal Q}).\] 
In conclusion, $f_1\circ g(p)$ lies in the open $2$-dimensional orbit if and only if $\ord_p(\mathcal{F})=\ord_p(\mathcal{Q})$.
 \end{proof}

\begin{corollary}\label{Cor4.1}
Assume the same setting as in Proposition {\rm \ref{charc on 2dim orbit}}. We have 
$\deg \varphi\leq \deg g\leq \frac{3}{2}\deg \varphi.$
Moreover, 
\begin{enumerate}
\item[(1)] $\deg g= \deg \varphi$ if and only if $\mathcal{F}=0$.
\item[(2)] $\deg g= \frac{3}{2}\deg \varphi$ if and only if $\mathcal{F}=\mathcal{Q}$.
\item[(3)] If $\deg \varphi=1$, then $\deg g=1$ so that $g(M)$ is a line in $\mathbb{C}P^3$. 
\end{enumerate}
\end{corollary}

\begin{proof} Counting the degree of both sides of \eqref{computationsInterDiv}, by \eqref{FlessthenOrEqualQ} we obtain 
\[ 6\deg g-6\deg \varphi=\deg({\mathcal F})\leq \deg({\mathcal Q})=2\deg g,\] 
which implies $\deg \varphi\leq \deg g\leq \frac{3}{2}\deg\varphi$ with the equality conditions as asserted in (1) and (2). In particular,  if $\deg \varphi=1$, then $\deg g=1$ and $g(M)$ is a line. 
\end{proof}

\vspace{1mm}

\subsection{The Generally Ramified Family}~

%%As  in Theorem~\ref{thm-tangential}

By Theorem \ref{thm-ramichar}, a sextic curve $\gamma$ in $\mathcal{H}^3_0$ is ramified in the sense of harmornic sequences at a point $q$  if and only if the tangent line of $\gamma$ at $q$ lies in $\mathcal{H}^3_0$. An important class of lines in $\mathcal{H}^3_0$ is given by the rulings of the tangent developable surface $\bf{S}$ (i.e., the closed $2$-dimensional $PSL_2$-orbit), which are exactly the tangent lines of the $1$-dimensional orbit $PSL_2\cdot u^6$; in particular, that there is a unique line though $q$ in the $1$-dimensional orbit implies that $\gamma$ is ramified at $q$ if and only if $\gamma$ is tangent to the $1$-dimensional orbit at $q$.  Our investigation of various examples and Galois analysis have prompted the following definition. 

\begin{definition} \label{def-generally}
We say that a sextic curve $\gamma$ in ${\mathcal H}_0^3$ is in the {\bf generally ramified family} if $\gamma$ is ramified {\rm (}as always, in the sense of harmonic sequences{\rm )} at the $1$-dimensional orbit $PSL_2\cdot u^6$ somewhere.
\end{definition}

Now, we give a characterization of tangency of $\gamma$ at the $1$-dimensional orbit in terms of intersection divisors. In the following, we denote the intersection multiplicity at a point $q\in \gamma \cap Q_5$ by $\ord_q(Q_5)$, and we stipulate that $\ord_q(Q_5)=+\infty$ if $\gamma$ lies in $Q_5$.  
 \begin{proposition}\label{mult4 implies tangent at 1 dim orbit}
 Let $F:\mathbb{C}P^1\rightarrow \mathcal{H}_0^3$ %\subset \mathbb{C}P^6$ 
 be a sextic curve. %For any point $q\in F\cap PSL_2\cdot u^6$,  % of the intersection of $F$ with the $1$-dimensional orbit, %$q\in F\cap Q_5$, 
 %the intersection multiplicity, denoted by $\ord_q(Q_5)$, of $F$ and $Q_5$ is greater than or equal to $2$.
%Moreover, 
$F$ is ramified at the $1$-dimensional orbit at $q$ %%, or, equivalently, $F$ is tangent to the $1$-dimensioanl orbit at $q$,
 if and only if $\ord_q(Q_5)\geq 4$.
 \end{proposition}
We defer the proof of this proposition to that of  Proposition \ref{LowTriRam} for the sake of not interrupting the smoothness of exposition. Immediately we obtain the following. 

\begin{corollary}
Let $F:\mathbb{C}P^1\rightarrow \mathcal{H}_0^3$ %\subset \mathbb{C}P^6$ 
be a sextic curve. If either the degree of the covering $\varphi: M\rightarrow {\mathbb C}P^1$ in Theorem~{\rm \ref{lift of curve not in 2dim orbit}} equals $1$, or $F$ lives in the closed $2$-dimensional $PSL_2$-orbit, then $F$ belongs to the generally ramified family.  %, then $F$ belongs to the generally ramified family. 
\end{corollary}
\begin{proof}
When $F$ lives in the closed $2$-dimensional orbit,
the conclusion holds because the curve intersects the $1$-dimensional orbit at a point $q$ by Theorem \ref{lem-case2}, while the fact that $Q_5\equiv 0$ on this curve implies $\ord_q(Q_5)=+\infty$. 

For the other case, it follows from Corollary~\ref{Cor4.1} that the line $g(M)$ cuts $Q_2$ in two points $p_1$ and $p_2$. Since $\mathcal{F}=0$, $p_1$ and $p_2$ are mapped to points on the $1$-dimensional orbit by $f_1\circ g$, at which there must hold $\ord_{p_i}(Q_5)\geq 6$ for $i=1$ or $2$. Then the conclusion follows from Proposition~\ref{mult4 implies tangent at 1 dim orbit}.     
\end{proof}

Henceforth, we assume that $F$ does not lie in the closed $2$-dimensional orbit. 

\begin{lemma}\label{mult by w}
Consider the Galois covering $\varphi:M\rightarrow \mathbb{C}P^1$ with the Galois group $G\subset S_4$ in the same setting as in Theorem {\rm \ref{lift of curve not in 2dim orbit}}. Given $p\in \supp \mathcal{Q}$, denote by $\mult_p(\varphi)$ the multiplicity of $\varphi$ at $p$, i.e., $\varphi: s\mapsto s^{\mult_p(\varphi)}$ for a local uniformizing parameter $s$ with $s(p)=0.$   
\begin{enumerate}
\item[(1)] \!\!If $w(p)$ is not associated with any of %does not belong to 
the $26$ centers of the octahedron,  \!\!\! %$26$ points of \eqref{26}, 
then $\mult_p(\varphi)\!\!=\!\!1$.
\item[(2)]  \!\!If $w(p)$ is associated with one of the $12$ edge centers, %is one of the midpoints of the $12$ edges of the octahedron, 
then $\mult_p(\varphi)=1$ or $2$.
\item[(3)]  \!\!If $w(p)$ is associated with one of the $8$ face centers,  %is one of the centers of the $8$ faces of the octahedron, 
then $\mult_p(\varphi)=1$ or $3$.
\item[(4)]  \!\!If $w(p)$ is associated with one of the $6$ vertices, then $\mult_p(\varphi)=1$ or $2$ or $4$.
\end{enumerate}

\end{lemma}
\begin{proof}

Let $G_p\triangleq\{\sigma\in G~|~\sigma(p)=p\}$ be the stabilizer of $p$. Then by \cite[Proposition 3.1, p.~76; Theorem 3.4, p.~78]{Miranda1995}, we have 
$\mult_p(\varphi)=|G_p|,$
 and $G_p$ is a finite cyclic subgroup of $G$, and hence of $S_4$. Since the non-trivial finite cyclic subgroups of $S_4$ are $C_2,~C_3$, and $C_4$, we infer $1\leq \mult_p(\varphi)=|G_p|\leq 4$. As said before, $G_p$ is trivial when $w(p)$ does not correspond to any of the $26$ centers of the regular octahedron, from which the conclusion in (1) follows. Moreover, with respect to the action of $S^4$ on the octahedron, the stabilizer of the vertex (respectively, edge center, face center) is isomorphic to $C_4$ (respectively, $C_2$, $C_3$), from which the conclusions in (2) $\sim$ (4) follow.  %using the stabilizer of these $26$ centers under the action of $S^4$, one can derive the conclusions in (2) $\sim$ (4). 
%If $\mult_p(\varphi)>1$, then $w(p)$ is fixed by a non-trivial rotation, hence $w(p)$ must lie in one of the axes of rotations, and so it belongs to the $26$ special points. Thus, $G_p=C_2$ (resp. $C_3$) if $w(g(p))$ is an edge midpoint (resp. center of a face). And $G_p=C_2$ or $C_4$, if $w(g(p))$ is a vertex.

\end{proof}
\iffalse
Observe that in the proof of Proposition \ref{charc on 2dim orbit}, the support of $\mathcal{F}$ lies in the six $w$-rulings in $Q_2$ for which $w^4=0,1, \infty$ that correspond to the six vertexes of the regular octahedron. Hence we may rewrite the Proposition \ref{charc on 2dim orbit} as follows.

\begin{corollary}\label{whether in 2dim orbit by w}
Assume the same setting as in Theorem {\rm \ref{lift of curve not in 2dim orbit}}. For any $p$ lying in the support of $\mathcal{Q}$, we have $(f_1\circ g) (p)$ belongs to the closed $2$-dimensional orbit $\overline{PSL_2\cdot u^5v}$. Moreover, 
\begin{enumerate}
\item[(1)] If $w(g(p))$ does not belong to the $6$ vertexes, then $\ord_p(\mathcal{F})=0$ and $(f_1\circ g) (p)$ lies in the $1$-dim orbit $PSL_2\cdot u^6$.
\item[(2)] If $w(g(p))$ is one of the $6$ vertexes, then $\ord_p(\mathcal{F})>0$, and
\begin{enumerate}
\item[(2.1)] $(f_1\circ g) (p)$ lies in the $1$-dimensional orbit if and only if $\ord_p(\mathcal{F})<\ord_p(\mathcal{Q})${\emph{;}}
\item[(2.2)] $(f_1\circ g) (p)$ lies in the open $2$-dimensional orbit $PSL_2\cdot u^5v $ if and only if $\ord_p(\mathcal{F})=\ord_p(\mathcal{Q})$.
\end{enumerate}
\end{enumerate}
\end{corollary}
\fi
Now, we provide sufficient conditions for the curve $F$ to belong to the generally ramified family.

\begin{theorem}\label{when tangent using divisor}
Let $F:\mathbb{C}P^1\rightarrow \mathcal{H}_0^3$ %\subset \mathbb{C}P^6$ 
be a sextic curve which is not contained in the closed $2$-dimensional $PSL_2$-orbit. If one of the following holds, then $F$ belongs to the generally ramified family. 
\begin{enumerate}

\item[(1)] There exists a point $p\in \supp \mathcal Q$ %\setminus \supp \mathcal F$ 
such that $w(p)$ is not associated with any of the $26$ centers of the octahedron, i.e., $w(p)$ does not satisfy \eqref{26}.

\item[(2)] There exists a point $p\in \supp \mathcal Q \setminus \supp \mathcal F$ such that either $\mult_{p}(\varphi)=1$, or $\ord_p(\mathcal{Q})\geq 2$ and $w(p)$ is associated with one of the $12$ edge centers and $8$ face centers. 

\item[(3)] There exists a point $p\in M$ such that $0<\ord_p(\mathcal{F})<\ord_p(\mathcal{Q})$. 
\end{enumerate}
\iffalse
\begin{enumerate}

\item[(1)] If there exists point $p$ lies in the support of $\mathcal{Q}$, and $w(g(p))$ is not one of the $26$ special points of the octahedron, or equivalently not satisfy the equation \eqref{26}.

\item[(2)] If there a point $p$ lies in the support of $\mathcal{Q}$ with $\ord_p(\mathcal{Q})\geq 2$, and $w(g(p))$ belongs to the $12$ edge midpoints and $8$ centers of faces.

\item[(3)] If there exists a point $p$ lies in the support of $\mathcal{F}$, and $\ord_p(\mathcal{F})<\ord_p(\mathcal{Q})$.% or equivalently, $(f_1\circ g)(p)$ belongs to the $1$-dim orbit $PSL_2\cdot u^6$.
\end{enumerate}
\fi
\end{theorem}

\begin{proof}
From \eqref{General difference between degt divisors} and \eqref{remarkable}, we obtain $\varphi^{\ast}(F^\ast Q_5)+2\mathcal{F}=6\mathcal{Q}$; hence, for any point $p\in M$, we have 
\begin{equation}\label{difference from Q5andQ2}
\mult_p(\varphi) \ord_{\varphi(p)}(Q_5)=6\ord_p(\mathcal{Q})-2\ord_p(\mathcal{F})=6\big(\ord_p(\mathcal{Q})-\ord_p(\mathcal{F})\big)+4\ord_p(\mathcal{F}).
\end{equation}
We will use Proposition \ref{charc on 2dim orbit} and Proposition \ref{mult4 implies tangent at 1 dim orbit} to prove this theorem. 

The conclusion for condition (1) follows from $\ord_p(\mathcal Q)>0=\ord_p(\mathcal F)$, and item (1) of Lemma~\ref{mult by w}. % as well as Proposition \ref{charc on 2dim orbit} and Proposition \ref{mult4 implies tangent at 1 dim orbit}. 

Under condition (2), we have $\ord_p(\mathcal F)=0$ while
$$\text{either}\;\ord_p(\mathcal Q)\geq 2\;\text{and}\;\mult_p(\varphi)\leq 3,\quad \text{or}\;\ord_p(\mathcal Q)\geq 1\;\text{and}\;\mult_p(\varphi)=1,$$
where items (2) and (3) of Lemma~\ref{mult by w} are used. Substituting these into \eqref{difference from Q5andQ2}, we obtain $\ord_{\varphi(p)}(Q_5)\geq 4$. Therefore, $F$ is tangent to the $1$-dimensional orbit at $F\circ \varphi(p)$.

Under condition (3), we have 
$$\ord_p(\mathcal F)\geq 1,~~~~~~\ord_p(\mathcal Q)-\ord_p(\mathcal F)\geq 1,$$
which implies $\mult_p(\varphi) \ord_{\varphi(p)}(Q_5)\geq 10$. Note that in this case, $\mult_p(\varphi)=1,2$, or $4$. The conclusion follows from that the minimal integer of the form  in \eqref{difference from Q5andQ2} is $16$ when it is a multiple of $4$ and is $\geq 10$.  
\end{proof}

In contrast to Definition~\ref{def-generally}, we introduce the following definition.  

\begin{definition}
If a sextic curve in ${\mathcal H}_0^3$ is not tangent to the $1$-dimensional orbit $PSL_2\cdot u^6$, then we say that it lies in the {\emph{exceptional transversal family}}.
\end{definition}

By Lemma \ref{mult by w} and Theorem \ref{when tangent using divisor}, %and Lemma 3.6 in \cite[p.~80]{Miranda1995}, 
we obtain the following necessary conditions for the exceptional transversal family.

\begin{proposition}\label{ChacExcepTransFamily}
Let $F:\mathbb{C}P^1\rightarrow \mathcal{H}_0^3$ %\subset \mathbb{C}P^6$ 
be a sextic curve that belongs to the exceptional transversal family. Then for any point $p \in \supp \mathcal{Q}$, %lies in the support of $\mathcal{Q}$, 
there holds that $w(p)$ corresponds to one of the $26$ centers of the octahedron. Moreover, 
\begin{enumerate}
\item[(1)] $\supp \mathcal F= \supp \mathcal Q$ if and only if $\mathcal F= \mathcal Q$, and

\item[(2)] for any given $p\in \supp \mathcal Q \setminus \supp \mathcal F$,  we have $\ord_p(\mathcal Q)=1$, and $w(p)$ is either one of the $12$ edge centers, for which 
$$\mult_p(\varphi)=2, ~~~~~~\ord_{\varphi(p)}(Q_5)=3,$$
or one of the $8$ face centers, for which 
$$\mult_p(\varphi)=3, ~~~~~~\ord_{\varphi(p)}(Q_5)=2.$$
\end{enumerate}
\iffalse
\begin{enumerate}
\item[(1)] If $w(p)$ is one of the $12$ edge midpoints, then $F$ intersect with the $1$-dim orbit $PSL_2\cdot u^6$ at $(f_1\circ g)(p)$ transversally.   Moreover $\ord_p(\mathcal{Q})=1,~\mult_p(\varphi)=2$, and $\ord_{\varphi(p)}(Q_5)=3$, and the fiber $\varphi^{-1}(\varphi(p))$ consists of $\frac{\deg \varphi}{2}$ distinct points.

\item[(2)] If $w(g(p))$ is one of the $8$ centers of faces, then  then $F$ intersect with the $1$-dim orbit $PSL_2\cdot u^6$ at $(f_1\circ g)(p)$ transversally.   Moreover $\ord_p(\mathcal{Q})=1,~\mult_p(\varphi)=3$ and $\ord_{\varphi(p)}(Q_5)=2$, and the fiber $\varphi^{-1}(\varphi(p))$ consists of $\frac{\deg \varphi}{3}$ distinct points.

\item[(3)] If $w(g(p))$ is one of the $6$ vertexes, then $\ord_p(\mathcal{F})=\ord_p(\mathcal{Q})$, and so $(f_1\circ g)(p)$ belongs to the open $2$-dim orbit $PSL_2\cdot u^5v$.  Meanwhile $\ord_{\varphi(p)}(Q_5)=\frac{4\ord_p(\mathcal{Q})}{\mult_p(\varphi)}$, where $\mult_p(\varphi)=1,2$ or $4$, and the fiber $\varphi^{-1}(\varphi(p))$ consists of $\frac{\deg \varphi}{\mult_p(\varphi)}$ distinct points.

\end{enumerate}
\fi
\end{proposition}
Note that for points in item (2) of the preceding proposition, $F$ intersects the $1$-dimensional orbit $PSL_2\cdot u^6$ at $f_1\circ g(p)$ transversally.

\subsection{The Exceptional Transversal Family}\label{sec5.4}~
\label{exceptional}

We now look at the exceptional transversal family in a unified fashion.

Let $G$ be the group of covering transformations of $\varphi:M\rightarrow \mathbb{C}P^1$ in the same setting as in Theorem \ref{lift of curve not in 2dim orbit}. As a subgroup of $S_4$, the Galois group $G$ can only be one of the following subgroups: the trivial group, the cyclic groups $C_i,~2\leq i\leq 4$, the dihedral groups $D_j,~2\leq j\leq 4$, the alternating group $A_4$, and $S_4$ itself. 

When $M=\mathbb{C}P^1$, the above Galois coverings were classified by Klein \cite{Klein1956} as given in Table \ref{Rational Galois Coverings} below. %(see also \cite[p.~53]{JeroenSijsling}).

\begin{table}[htpb]
\begin{tabular}{|c|c|c|c|c|}
\hline
$G$          & $C_i$ & $D_j$ & $A_4$ & $S_4$ \\ \hline
$\varphi(s)$ &   $s^i$    &  $s^j-2+s^{-j}$      & $\frac{(s^3-1)^3}{s^3(s^3+8)^3}$      &  $\frac{(s^8+14s^4+1)^3}{(s(s^4-1))^4}$      \\ \hline
\end{tabular}
\vspace{0.15cm}
\caption{Rational Galois Coverings}\label{Rational Galois Coverings}
\end{table}
\vskip -0.5cm

Given a sextic curve belonging to the exceptional transversal family, we label the points of intersection of this curve and the 1-dimensional orbit as points of type I, and the points of intersection of this curve and the open 2-dimensional orbit as points of type II.

Let $q$ be a point of type I. For each ramified point $p$ over $q=\varphi(p)$, it follows from Proposition~\ref{ChacExcepTransFamily} that $\ord_p(\mathcal Q)=1$, and 
$$\text{ either }~~\mult_p(\varphi)=2\; \text{and}\;\ord_{q}(Q_5)=3,\quad \text{or}~~ \mult_p(\varphi)=3\; \text{and}\;\ord_{q}(Q_5)=2.$$
%By Theorem~\ref{when tangent using divisor} and Proposition~\ref{ChacExcepTransFamily},
%} each singular point $q$ lying in the $1$-dimensional $PSL_2$ orbit, to be labeled points of group 1, has $\#_q=2$ or $3$ and $\ord_p({\mathcal Q})=1$, so that $m_p = 3$ or $2$ for each ramified point $p$ over $q$. 
Let $\Sigma_1$ and $\Sigma_2$ be the number of points assuming $\ord_q(Q_5)=2$ and $3$, respectively. Then 
\begin{equation}\label{Sig}
l\triangleq 2\Sigma_1+3\Sigma_2
\end{equation}
satisfies $0\leq l\leq 12$. It is easy to verify that the total ${\mathcal Q}$-degree for type I is 
\begin{equation}\label{eq-gropu1}
(\deg \varphi/3)\Sigma_1+(\deg \varphi/2)\Sigma_2=l\,\deg \varphi/6, 
\end{equation}
where $\deg \varphi /3$ (respectively, $\deg \varphi/2$) is the number of ramified points $p$ over $q$ with $\mult_p(\varphi)=3$ (respectively, $\mult_p(\varphi)=2$)  \cite[Lemma 3.6, p.~80]{Miranda1995}.

%Each singular point on the $2$-dimensional orbit $PSL_2\cdot u^5v$, to be labeled points of group 2, satisfies $4\ord_p({\mathcal Q})=\mult_p(\varphi)\ord_q(Q_5)$ for each ramified point $p$ over $q$, so that the total ${\mathcal Q}$-order  over $q$ is
Let $q$ be a point of type II. For each ramified point $p$ over $q$, it follows from Proposition~\ref{ChacExcepTransFamily} and \eqref{difference from Q5andQ2} that $4\ord_p({\mathcal Q})=\mult_p(\varphi)\ord_q(Q_5)$, which implies that the total ${\mathcal Q}$-degree over $q$ is    
\begin{equation}\label{QqQQQ}
\sum_{p\in \varphi^{-1}(q)} \ord_p(\mathcal{Q})=\sum_{p\in \varphi^{-1}(q)}\mult_p(\varphi)\,\ord_q(Q_5)/4=\deg \varphi \,\ord_q(Q_5) /4. %\quad \text{for group 2}. 
\end{equation}
Therefore the total ${\mathcal Q}$-degree for type II is
\begin{equation}\label{Qq}
\sum_{q\,\text{of type II}} \deg \varphi \,\ord_q(Q_5) /4= \left(\sum_{q\,\text{of type II}}\ord_q(Q_5)\right)\deg \varphi/4=(12-l)\deg \varphi/4.
\end{equation}
Hence 
\[2\deg(g) =\deg(\mathcal{Q})=l\deg \varphi/6+(12-l)\deg \varphi/4=(36-l)\deg \varphi/12,\]
which implies $\deg g=(36-l)\deg \varphi/24$.

To illustrate, consider $\deg \varphi=2$, for which $\deg g=(36-l)\deg \varphi/24$ gives $l=0$ or $12$. 

If $l=0$ then $\deg g=3$; all points $q$ of $\supp{\mathcal{Q}}$ live in the $2$-dimensional orbit $PSL_2\cdot u^5v$. We seek to find examples where the genus of $M$ is zero. The Riemann-Hurwitz formula dictates that there be exactly two points $q_1$ and $q_2$ of type II over each of which there sits a single ramified point $p_1$ and $p_2$, respectively, with ramification index $1$ ($\mult_{p_i}(\varphi)=2$), so that the formula $4\ord_p({\mathcal Q})=\mult_p(\varphi)\ord_q(Q_5)$ gives that $\mult_{q_i}(Q_5)$ are multiples of $2$. There may exist other points $q_3,\cdots, q_m$ of type II over each of which there sit two ramified points $p_{j1}$ and $p_{j2}, 3\leq j\leq m,$ each with ramification index 0 ($\mult_{p_{jk}}(\varphi)=1$) so that $\ord_{q_j}(Q_5)$ is a multiple of $4$ for $3\leq j\leq m$. In the most generic situation, $\ord_{q_i}(Q_5)=2$ for $i=1,2$ and $\ord_{q_j}(Q_5)=4$ for $3\leq j\leq m$, for which we have the constraint
$$
12=2+2+4(m-2), \quad \text{so}\; m=4.
$$
In other words, there are four points $q_1, \cdots,q_4$ of type II, where the Galois covering is unramified over $q_3$ and $q_4$.

Indeed, up to a $PSL_2$-transformation on the left and an isotropy group action on the right of the Galois lift, a detailed Galois analysis, which we will report elsewhere, proves that this is the only possibility with the Galois lift $g(s)$ given by
$${\small
\aligned
&g=[1:w:x:yw],\\
&x=-\sqrt{-1}(t^2 - 1)/((-t + \sqrt{-1})s^2 + t^3 - \sqrt{-1}),\quad y=(-\sqrt{-1} t^2+ \sqrt{-1}s^2 - t s^2+ t)/(t^3 - t),\\
&w=(t^3 - t)/(s((-t + \sqrt{-1})s^2 + t^3 - \sqrt{-1})),
\endaligned}
$$
whenever $t$ is not a zero of a certain polynomial of a large degree which we do not record here. The Galois covering $\varphi$ is $z=s^2$ corresponding to the group $C_2$, $q_1$ and $q_2$ are $z=0$ and $z=\infty$, and $q_3$ and $q_4$ are $z=1$ and $z=t^4$.

If $l=12$ then $\deg g=\deg \varphi=2$. Since $\gamma$ is in the exceptional transversal family, each point $q$ of type I has $\ord_q(Q_5)=3$ (since $\mult_p(\varphi)=2$ as $\deg \varphi=2$). As a result, there are four points $q_1,\cdots,q_4$ of type I over each of which there sits a single ramified point $p_1,\cdots,p_4$, respectively, each with ramification index $1$. The Riemann-Hurwitz formula implies that there do not exist any such Galois lifts $g$ with genus zero.

Suffices it to say that a detailed Galois analysis proves that when $\deg g=\deg\varphi=2$, there are two $1$-parameter classes of Galois lifts in the generally ramified family.

As another example,  let us find a procedure to determine the structure of $M$ with genus zero, for which the Riemann-Hurwitz formula gives
$$
-2\geq -2\deg\varphi+ 2(\deg\varphi/3)\Sigma_1 + (\deg\varphi/2)\Sigma_2,
$$
so that $(2\Sigma_1/3 +\Sigma_2 /2 -2)\deg\varphi\leq -2$ from which we determine, since $2\Sigma_1/3 +\Sigma_2 /2-2< 0$, an even $l$ to make sure $\deg g=(36-l)\deg\varphi/24$ is an integer, which comes down to 
$$
(l,\Sigma_1,\Sigma_2)=(8,1,2), (6,0,2), (4,2,0),(2,1,0).
$$
If we set $\deg\varphi\geq 4$, then $(2\Sigma_1/3 +\Sigma_2 /2 -2)\deg\varphi\leq -2$ gives $2\Sigma_1/3 +\Sigma_2 /2\leq 3/2$, from which we narrow it down to 
$$
(l,\Sigma_1,\Sigma_2)=(8,1,2), (6,0,2), (2,1,0).
$$
We choose $(8,1,2)$ to find an example. Since $l=8$ for group 1, whose structural constants $\ord_q(Q_5)$ and $\ord_p(\varphi)$ are known to leave the relatively small number $4$ for group 2, we calculate
$$
(l,\deg \varphi,\deg g)=(8,4,6),\quad (8,6, 7), \quad (8,12,14), \quad (8,24,28).
$$
If we seek to find an example with an irreducible $p(z,x)$ so that $\deg\varphi\geq 6$, we should start with $(8,6,7)$, where the genus of $M$ is zero.

In more details, since $\Sigma_1=1$ and $\Sigma_2=2$, we have three  points $q_1, q_2, q_3$ of type I such that $\ord_{q_1}(Q_5)=2$ and $\ord_{q_2}(Q_5)=\ord_{q_3}(Q_5)=3$, where each ramified point $p_{1j}$ sitting over $q_1$ has $\mult_{p_{1j}}(\varphi)=3$ while each ramified point $p_{2l}, p_{3s}$ over $q_2$ and $q_3$ has $\mult_{p_{2l}}(\varphi)=\mult_{p_{3s}}(\varphi)=2$. Therefore, there are two ramified points over $q_1$ and three ramified points over each of $q_2$ and $q_3$.

Switching to the points of type II, since 
$$
-2\deg \varphi+(2 \deg \varphi/3)\Sigma_1+(\deg \varphi/2)\Sigma_2=-2
$$
already verifies the Riemann-Hurwitz formula, we see that all points of type II are unramified. We have at most 4 such kind of points. To make \eqref{QqQQQ} an integer, for such a point $\tilde{q}$ in the formula, there must hold $\ord_{\tilde{q}}(Q_5)\geq 4$ since $\mult_{\tilde{p}}(\varphi)=1$ for each ramified point $\tilde{p}$ over $\tilde{q}$. But then this means that $\tilde{q}$ is the only such kind of point since the total $Q_5$-degree for type II is 4. Therefore, there are six ramified points sitting over $\tilde{q}$ each with ramification index 0.

Indeed, a detailed Galois analysis in the case of genus zero proves that this is the only possibility (up to left $PSL_2$ and right isotropy actions):
$${\small
\aligned
&g=[a:b:c:d],\\
&a=((\sqrt{3}\sqrt{-1} - \sqrt{-1} - (1 + \sqrt{-1})b_3s^4)\sqrt{2})/2 - (((-1 + \sqrt{-1})b_3\sqrt{3} - 2s^4 + (1 - \sqrt{-1})b_3)s^3)/2,\\
&b=((((-1 + \sqrt{-1})\sqrt{3} + 2\sqrt{-1}b_3s^4 - 1 + \sqrt{-1})\sqrt{2} - 2(-b_3\sqrt{3} + (1 +\sqrt{-1})s^4 - b_3)s^3)/(2 + 2\sqrt{3}),\\
&c=-(((1 - \sqrt{-1}) + (1 + \sqrt{-1})(s^2 + d_3)s^3\sqrt{2}+ (1 - \sqrt{-1})(-d_3s^2 + 1)\sqrt{3}+ (1 - \sqrt{-1})d_3s^2)s)/2,\\
&d=-((((s^2 - d_3)\sqrt{3}+ s^2 + d_3)s^3\sqrt{2}\sqrt{-1} - 2d_3s^2 - 2)s)/2,\quad \text{where},
\endaligned
}
$$
%%where
$$
\aligned
&b_3=(-1/6 -\sqrt{-1}/6)(\sqrt{-6} t^6 + \sqrt{3} d_3 t^3 - 3d_3 t^3 + \sqrt{-6}) \sqrt{3}/t^3,\\
&d_3=-(\sqrt{-6}t^5 -\sqrt{-2}t^5 +2)/[t^2(\sqrt{-6}t-\sqrt{-2}t+2\sqrt{3}-4)].
\endaligned
$$
Moreover, $q_1$ corresponds to $z=\infty$, $q_2$ corresponds to $z=0$, $q_3$ corresponds to $z=-4$, and $\tilde{q}$ corresponds to $z=t^3-2+1/t^3$. No Galois lifts with $k=6,8,9$ exist. Here, the Galois covering $\varphi$ is $z=s^3-2+1/s^3$ with the Dihedral group $D_3$.

To end this subsection, due to its length, we only summarize our classification of Galois covering when $M=\mathbb{C}P^1$ in Table~\ref{Classification of Rational Galois Coverings.} below. %After a detailed Galois analysis, we classify the Galois covering when $M=\mathbb{C}P^1$, see Table \ref{Classification of Rational Galois Coverings.}.

{\tiny\begin{center}
\begin{table}[htpb]
\begin{tabular}{|c|c|c|c|c|}
\hline
\multirow{2}{*}{$\deg\varphi$} & \multirow{2}{*}{$G$} & \multirow{2}{*}{$\deg g$} & \multicolumn{2}{c|}{Dimension of the Moduli Spaces}             \\ \cline{4-5} 
                               &                      &                           & \multicolumn{1}{c|}{Generally Ramified Family} & Exceptional Transversal Family \\ \hline
\multirow{2}{*}{$2$}  & \multirow{2}{*}{$C_2$} & $2$                 & $1$                       & $\varnothing$                           \\ \cline{3-5} 
                      &                        & $3$                 & $\varnothing$                    & $1$                            \\ \hline
\multirow{2}{*}{$3$}  & \multirow{2}{*}{$C_3$} & $3$                 & $1$                       &$\varnothing$ \\ \cline{3-5} 
                      &                        & $4$                 & $\varnothing$                      & $1$                            \\ \hline
\multirow{4}{*}{$4$}  & \multirow{2}{*}{$C_4$} & $4,6$               & $\varnothing$                  & $\varnothing$                           \\ \cline{3-5} 
                      &                        & $5$                 & $2$                       & $\varnothing$                         \\ \cline{2-5} 
                      & \multirow{2}{*}{$D_2$} & $4,6$               & $\varnothing$                      & $\varnothing$ \\ \cline{3-5} 
                      &                        & $5$                 & $2$                       & $1$                            \\ \hline
\multirow{2}{*}{$6$}  & \multirow{2}{*}{$D_3$} & $6,8,9$             & $\varnothing$                      & $\varnothing$                           \\ \cline{3-5} 
                      &                        & $7$                 & $\varnothing$                     & $1$                            \\ \hline
\multirow{3}{*}{$8$}  & \multirow{3}{*}{$D_4$} & $8,10,12$           & $\varnothing$                      & $\varnothing$                          \\ \cline{3-5} 
                      &                        & $9$                 & $1$                       & $\varnothing$                          \\ \cline{3-5} 
                      &                        & $11$                & $\varnothing$                     & $1$                            \\ \hline
\multirow{3}{*}{$12$} & \multirow{3}{*}{$A_4$} & $12,14,16 \sim 18$               & $\varnothing$                       & $\varnothing$                           \\ \cline{3-5} 
                      &                        &    $13$    & $1$                     &$\varnothing$                           \\ \cline{3-5} 
                      &                        & $15$                & $0$                  & $\varnothing$                          \\ \hline
\multirow{4}{*}{$24$} & \multirow{4}{*}{$S_4$} & $24,26 \sim 28,30,32 \sim 36$ & $\varnothing$                     & $\varnothing$                          \\ \cline{3-5} 
                      &                        & $25$                & $2$                       &$\varnothing$                          \\ \cline{3-5} 
                      &                        & $29$                & $1$                       & $\varnothing$                           \\ \cline{3-5} 
                      &                        & $31$                & $\varnothing$                     & $1$                            \\ \hline
\end{tabular}
\vspace{0.15cm}
\caption{Classification of Rational Galois Coverings.}\label{Classification of Rational Galois Coverings.}
\end{table}
\end{center}}
\vskip -0.5cm 

In particular, there are at most finitely many constantly curved sextic curves $\subset {\mathcal H}_0^3$ on the list that belong to the exceptional transversal family, by checking total unramification encountered in Section \ref{Sec4}.
In view of the above examples and classification, it is tempting to suggest that a constantly curved  holomorphic $2$-sphere in $G(2,5)$, which differs from a sextic curve in the exceptional transversal family by a $GL(5,{\mathbb C})$-automorphism, be nongeneric among all constantly curved holomorphic $2$-spheres of degree $6$.

On the other hand, the situation in the generally ramified family is clear-cut. We will show in the next section that
a constantly curved holomorphic $2$-sphere of degree $6$ in $G(2,5)$, which differs from a sextic curve $\gamma$ in the generally ramified family by a $GL(5,{\mathbb C})$-transformation, is such that the $6$-plane $\bf L$ it spans in ${\mathbb C}P^9$ differs from that spanned by the standard Veronese curve \eqref{eq-standard} only by a diagonal matrix in $GL(5,{\mathbb C})$.

 \section{Generally ramified holomorphic $2$-spheres of degree $6$ in $G(2,5)$}\label{sextic}
Thanks to the discussion in Section~\ref{sec-para}, we say that a holomorphic $2$-sphere of degree $6$ in $G(2,5)$ is {\em generally ramified} if it is projectively equivalent to a sextic curve in $\mathcal{H}^3_0$ belonging to the generally ramified family.  %%ramified at the $1$-dimensional orbit at some point. 
In this section, we will first give a useful parameterization to such kind of $2$-spheres, then employ it to investigate such $2$-spheres of constant curvature. We will %%focus on those generic codimension 3 linear sections ${\mathcal H}^3$, thanks to their rich structures, to 
show that a generally ramified constantly curved holomorphic $2$-sphere of degree $6$ can only live in the Fano $3$-folds ${\mathcal H}^3$ that differ from the standard $\mathcal{H}^3_0$ by a diagonal transformation in $GL(5,{\mathbb C}^5)$, up to $U(5)$-equivalence. 

\begin{definition}\label{defn2} By the {\bf diagonal family} we mean constantly curved holomorphic $2$-spheres of degree $6$ in $G(2,5)$ parameterized as follows{\rm :} %in Case {\rm (1)} of Proposition {\rm \ref{parametrization prop}}, %{\em (}see also \eqref{final parametrization}{\em )}, 
\begin{equation}\label{GenericG}
%G\triangleq 
\diag(a_{00},\cdots,a_{44})\cdot (E_0,E_1,\ldots,E_6)\, \diag\{\omega_0,\omega_1,\ldots,\omega_6\} \, Z_6(z), 
\end{equation}
where $\{E_0,\ldots,E_6\}$ is the orthonormal basis of $V_6$ defined in \eqref{basisOfV6}, and $Z_6(z)$ is the Veronese $2$-sphere in \eqref{Veronse sphere}.  %$A\triangleq \diag(a_{00},\cdots,a_{44})$ is a diagonal matrix and the columns of are mutually orthogonal and all of unit length.
\end{definition}
\iffalse
\begin{remark}
The holomorphic $2$-sphere parameterized as in \eqref{GenericG} lives in $G(2,5)$, if and only if, 
{\small\begin{equation}\label{perturbed}
\aligned
&\omega_0\omega_4-4\omega_1\omega_3+3\omega_2^2=0,\quad \omega_0\omega_5-3\omega_1\omega_4+2\omega_2\omega_3=0,\quad\omega_0\omega_6-9\omega_2\omega_4+\\
&8\omega_3^2=0,\quad 
\omega_2\omega_6-4\omega_3\omega_5+3\omega_4^2=0,\quad
\omega_1\omega_6-3\omega_2\omega_5+2\omega_3\omega_4=0.
\endaligned
\end{equation}}
%and $\omega_i\neq 0,~i=0,\ldots,6$.
\vskip -0.2cm
\end{remark}
\fi
The following is the main result of this section. 
\begin{theorem}\label{thm-tangential}
    Let $\gamma:\mathbb{C}P^1\rightarrow G(2,5) $ be a generally ramified holomorphic $2$-sphere of degree $6$. If $\gamma$ is of constant curvature, then $\gamma$ belongs the the diagonal family. %, and is ramified at two distinct points with multiplicities at least $2$. 
\end{theorem}
\iffalse
\begin{definition}
We call a rational normal curve $\gamma:\mathbb{C}P^1\rightarrow G(2,5)$ of degree $6$ belongs to the \textbf{Lower-Triangular Family}, if under some suitable parameter $z$ of $\mathbb{C}P^1$, the curve is written as 
\begin{equation}\label{lower-triangular family}
\gamma(z)=A\cdot (E_0,\ldots,E_6)\cdot L \cdot \begin{pmatrix}
1 & \sqrt{6}z & \cdots & z^6
\end{pmatrix}^T,
\end{equation}
where $L=(L_{ij})_{0\leq i,j\leq 6}$ is a non-degenerate lower-triangular matrix, and $A\in GL(5,\mathbb{C})$. Moreover, if $\gamma$ lies in the standard fano 3-fold $\mathcal{H}_0^3$, then $A\in SL_2$.
\end{definition}
\fi
\subsection{Sextic curves in $\mathcal{H}^3_0$ ramified at the $1$-dimensional orbit}
\begin{proposition}\label{LowTriRam}
Let $\gamma:\mathbb{C}P^1\rightarrow \mathcal{H}_0^3$ be a sextic curve, and let $p$ be a point of the $1$-dimensional orbit.\\
{\rm (1)} $\gamma$ is ramified at $p$, if and only if, up to a transformation in $SL(2,\mathbb{C})$, $\gamma $ can be parameterized as 
\begin{equation}\label{eq-parameter}
\gamma(z)=L  \begin{pmatrix}
1 & z & z^2 &  \cdots & z^6,
\end{pmatrix}^t,\quad\text{where}
\end{equation}
\begin{equation}\label{L hard}
L=\begin{pmatrix}
L_{00} & L_{01} & L_{02} & 0 &0 & 0 & 0\\
L_{10} & L_{11} & L_{12} & 0 & 0 & 0 & 0\\
L_{20} & L_{21} & L_{22} & L_{23} & 0 &0 & 0\\
L_{30} & L_{31} & L_{32} & L_{33} & L_{34} & 0 & 0\\
L_{40} & L_{41} & L_{42} & L_{43} & L_{44} & 0 & 0\\
L_{50} & L_{51} & L_{52} & L_{53} & L_{54} & L_{55} & 0\\
L_{60} & L_{61} & L_{62} & L_{63} & L_{64} & L_{65} & 1\\
\end{pmatrix},  
\end{equation}
 if and only if, the vanishing order of $Q_5$ restricted on ${\gamma}$ at $p$ is no less than $4$. \\
{\rm (2)} $\gamma$ is ramified at $p$ with multiplicity no less than $2$, if and only if,  one of $L_{02},L_{23},L_{34}$ in \eqref{L hard} vanishes, if and only if, $L$ is lower-triangular, if and only if, the vanishing order of $Q_5$ restricted on $\gamma$ at $p$ is no less than $6$. 
\end{proposition}
\begin{proof}
If $\gamma$ can be parameterized as \eqref{eq-parameter} with $L$ taking the form of \eqref{L hard}, then it is easily checked that $\gamma$ is ramified at $p$, which has multiplicity $2$ if $L$ is lower-triangular. 

Next, we use the transvectant characterization of $\mathcal{H}^3_0$ to prove the reverse part. %, assume $\gamma$ is ramified at $p$. 
Choose a coordinate $z$ on $\mathbb{C}P^1$ such that $\gamma(\infty)=p$. Note that by applying a transformation in $SL(2,C)$, we can assume $p=v^6$. 
Let $\{l_0, l_1, \cdots, l_6\}$ be the columns of $L$, and set $L_{ij}=\frac{\partial^6 l_j}{\partial v^i u^{6-i}}$. Then we have $$\gamma=\sum_{j=0}^6z^jl_j,\quad l_6=v^6.$$
%Moreover, the ramification condition is equivalent to say that $l_1\in \mathcal{H}^3_0$. 
Assume $\gamma$ is ramified at $p$. By Theorem~\ref{thm-ramichar}, we know the line spanned by $l_6$ and $l_5$ lies in $\mathcal{H}^3_0$. It is well-known that the only line passing through $v^6$ is given by $v^6+t u v^5$. Therefore, we have 
$l_5=\alpha v^6 +\beta uv^5,$
with $\beta\neq 0$.

In terms of the transvectant characterization (see Proposition \ref{orbits defined by transvectant}), $\gamma$ lying in $\mathcal{H}^3_0$ is equivalent to saying $(\gamma,\gamma)_4=0$, which implies, for any $0\leq j \leq 12$, that we have 
\begin{equation}\label{eq-4thtrans}
\sum_{r+s=j} (l_r,l_s)_4=0.
\end{equation}

 In the following, we use the symbol "$*$" to denote some unimportant nonzero constants.

 Take $j=10$ in \eqref{eq-4thtrans}. Since $(l_5, l_5)_4=0$, we have 
 %$$0=(l_6,l_4)_4\approx \frac{\partial^4 l_6}{\partial v^4}\frac{\partial^4 l_4}{\partial u^4}\approx v^2\frac{\partial^4 l_4}{\partial u^4},$$
 $$0=(l_6,l_4)_4=*\,\frac{\partial^4 l_6}{\partial v^4}\frac{\partial^4 l_4}{\partial u^4}=*\, v^2\frac{\partial^4 l_4}{\partial u^4}.$$
 %where the symbol "$\approx$" means equalling up to a constant coefficient. 
%where the symbol "$*$" is used to denote some unimportant nonzero constants. 
It follows that $\frac{\partial^4 l_4}{\partial u^4}=0$. 

 Taking $j=9$ in \eqref{eq-4thtrans}, we have 
 $$0=(l_5,l_4)_4+(l_6,l_3)_4= \beta(uv^5,l_4)_4+(l_6,l_3)_4 %\approx %\frac{\partial^4 l_6}{\partial v^4}\frac{\partial^4 l_4}{\partial u^4}
 = *\,v^2\frac{\partial^4 l_4}{\partial u^3\partial v}+*\,v^2\frac{\partial^4 l_3}{\partial u^4},$$
% where the symbol "$\approx$" means equalling up to a nonzero constant coefficient. 
where we have used $\frac{\partial^4 l_4}{\partial u^4}=0$. 
It follows that $\frac{\partial^5 l_3}{\partial u^5}=*\,
\frac{\partial^5 l_4}{\partial u^4\partial v}=0$.
%Therefore, we have proved that 

From 
\begin{equation}\label{eq-l6543}
l_6=v^6,~~~ l_5=\alpha v^6 +\beta uv^5,~~~ \frac{\partial^4 l_4}{\partial u^4}=0,~~~ \frac{\partial^5 l_3}{\partial u^5}=0,
\end{equation}
we can derive that $L$ takes the form as in \eqref{L hard}. 

To calculate the vanishing order of $Q_5|_{\gamma}$ at $p$, we use the the transvectant characterization 
\begin{equation}\label{eq-Q5}
Q_5|_{\gamma}=(\gamma,\gamma)_6=\sum_{j=0}^{12}z^j\sum_{r+s=j} (l_j,l_k)_6.
\end{equation}
It follows from \eqref{eq-l6543} that 
$$(l_6,l_5)_6=0,~~~(l_6,l_4)_6=*\, \frac{\partial^6 l_4}{\partial u^6}=0,~~~(l_6,l_3)_6=*\, \frac{\partial^6 l_3}{\partial u^6}=0,~~~(l_5,l_5)_6=0,~~~(l_5,l_4)_6=*\, \frac{\partial^6 l_4}{\partial u^5 \partial v}=0.$$
\iffalse
It is easy to verify that 
$$(l_6,l_5)_6=*\, \frac{\partial^6 l_5}{\partial u^6},~~~(l_6,l_4)_6=*\, \frac{\partial^6 l_4}{\partial u^6},~~~(l_6,l_3)_6=*\, \frac{\partial^6 l_3}{\partial u^6},~~~ $$
\fi
Therefore, we have $\deg(Q_5|_{\gamma})\leq 8$, which implies the vanishing order of $Q_5|_{\gamma}$ at $\gamma(\infty)=p$ is no less than $4$. 

Conversely, assume the vanishing order of $Q_5|_{\gamma}$ at $\gamma(\infty)=p$ is no less than $4$. Then we have 
\begin{align}
    &2(l_6, l_4)_6+(l_5,l_5)_6=0,\label{eq-j=10-6}\\
    &2(l_6, l_4)_4+(l_5,l_5)_4=0,\label{eq-j=10-4}\\
    &(l_6, l_3)_4+(l_5,l_4)_4=0,\label{eq-j=9-6}\\
    &(l_6, l_3)_6+(l_5,l_4)_6=0.\label{eq-j=9-4}
\end{align}
It follows from $(l_6,l_5)_4=0$ that $\frac{\partial^4 l_5}{\partial u^4}=0$. 
By comparing \eqref{eq-j=10-6} with the second derivative with respect to $u$ on \eqref{eq-j=10-4}, it is easy to derive  
%$\frac{\partial^6 l_4}{\partial u^6}=0$, and then 
$\frac{\partial^3 l_5}{\partial u^3}=0$. Then substituting this into the first derivative with respect to $u$ of \eqref{eq-j=10-4}, we obtain $\frac{\partial^5 l_4}{\partial u^5}=0$. By comparing \eqref{eq-j=9-6} with the second derivative with respect to $u$ of \eqref{eq-j=9-4}, it is easy to derive 
%$\frac{\partial^6 l_4}{\partial u^6}=0$, and then 
$\frac{\partial^6 l_3}{\partial u^6}=0$. Substituting this into \eqref{eq-j=9-6} and combining \eqref{eq-j=10-4}, we have $\frac{\partial^2 l_5}{\partial u^2}=0$ and $\frac{\partial^4 l_4}{\partial u^4}=0$. Finally, by taking the second derivative with respect to $u$ on both sides of 
$0=2(l_5, l_3)_4+(l_4,l_4)_4,$
we derive $\frac{\partial^5 l_3}{\partial u^5}=0$. Therefore $L$ has  the form of \eqref{L hard}, and is ramified at $p$.  

%Now we assume the multiplicity of $\gamma$ at the ramified point $p$ is no less than $2$, which 
In fact, that the multiplicity of $\gamma$ at the ramification point $p$ is no less than $2$ can be characterized by one more equation that  
$(l_5, l_4)_4=0.$
It follows from $\frac{\partial^4 l_4}{\partial u^4}=0$ that 
$$(l_5, l_4)_4=(uv^5, l_4)_4=*\,\frac{\partial^4 l_4}{\partial u^3\partial v}.$$ 
Therefore, that $\gamma$ is ramified at $p$ with multiplicity no less than $2$ is equivalent to saying that $L$ takes the form as in \eqref{L hard} and $\frac{\partial^3 l_4}{\partial u^3}=0$, i.e., $L_{34}=0$. 
\iffalse
It follows from $\frac{\partial^4 l_4}{\partial u^4}=0$ that 
$$0=(l_5, l_4)_4=(uv^5, l_4)_4=*\,\frac{\partial^4 l_4}{\partial u^3\partial v},$$ 
which implies $\frac{\partial^3 l_4}{\partial u^3}=0$.  
\fi

Taking $j=9$ in \eqref{eq-4thtrans}, we have 
 $$-(l_5, l_4)_4=(l_6,l_3)_4
 =*\,v^2\frac{\partial^4 l_3}{\partial u^4}.$$
 Therefore, $\frac{\partial^3 l_4}{\partial u^3}=0$ is equivalent to $\frac{\partial^4 l_3}{\partial u^4}=0$, i.e., $L_{23}=0$ in \eqref{L hard}.   

Choosing $j=8$ in \eqref{eq-4thtrans}, it follows from \eqref{eq-l6543} that  
\begin{equation}\label{eq-j=8}
\begin{split}
    0=&2(l_6,l_2)_4+2(l_5,l_3)_4+(l_4,l_4)_4\\
 =&*\,v^2\frac{\partial^4 l_2}{\partial u^4}+*\,v^2\frac{\partial^4 l_3}{\partial u^3\partial v}+*\,uv\frac{\partial^4 l_3}{\partial u^4}+*\,\frac{\partial^4 l_4}{\partial u^3 \partial v}\frac{\partial^4 l_4}{\partial u \partial v^3}+*\,\frac{\partial^4 l_4}{\partial u^2 \partial v^2}\frac{\partial^4 l_4}{\partial u^2 \partial v^2}.
 \end{split}
\end{equation}
Taking the second partial derivative with respect to $u$ on both sides, we obtain 
$\frac{\partial^6 l_2}{\partial u^6}=*\, L_{23}^2,$
which implies that $L_{23}=0$ is equivalent to $L_{02}=0$.  

Next, we prove that $L_{34}=0$ if and only if $L$ is lower-triangular, i.e., that the following equations hold,  
\begin{equation}\label{eq-l654321}
l_6=v^6,~~~ l_5=\alpha v^6 +\beta uv^5,~~~ \frac{\partial^3 l_4}{\partial u^3}=0,~~~ \frac{\partial^4 l_3}{\partial u^4}=0, ~~~ \frac{\partial^5 l_2}{\partial u^5}=0, ~~~ \frac{\partial^6 l_1}{\partial u^6}=0. 
\end{equation} 
Note that only the last two equations need to be verified. The second to last follows from taking the partial derivative with respect to $u$ on both sides of \eqref{eq-j=8}.  
\iffalse
 $$0=2(l_6,l_2)_4+2(l_5,l_3)_4+(l_4,l_4)_4
 =*\,v^2\frac{\partial^4 l_2}{\partial u^4}+*\,v^2\frac{\partial^4 l_3}{\partial u^3\partial v}+*\,\frac{\partial^4 l_4}{\partial u^2 \partial v^2}\frac{\partial^4 l_4}{\partial u^2 \partial v^2}.$$
The partial derivative with respect to $u$ on both hand sides implies that $\frac{\partial^5 l_2}{\partial u^5}=0$. 
\fi
Taking $j=7$ in \eqref{eq-4thtrans}, we have 
 \begin{equation*}
 \begin{split}
 0&=(l_6,l_1)_4+(l_5,l_2)_4+(l_4,l_3)_4\\
 &=*\,v^2\frac{\partial^4 l_1}{\partial u^4}+*\,v^2\frac{\partial^4 l_2}{\partial u^4}+*\,v^2\frac{\partial^4 l_2}{\partial u^3\partial v}+*\,\frac{\partial^4 l_4}{\partial u^2 \partial v^2}\frac{\partial^4 l_3}{\partial u^2 \partial v^2}+*\,\frac{\partial^4 l_3}{\partial u^3 \partial v}\frac{\partial^4 l_4}{\partial u \partial v^3}.
 \end{split}
 \end{equation*}
The second partial derivative with respect to $u$ on both sides implies that $\frac{\partial^6 l_1}{\partial u^6}=0$. 

%Thus the claim \eqref{eq-l654321} has been proved. 
Similar to the discussion in the first part, we can derive that the vanishing order of $Q_5|_{\gamma}$ at $\gamma(\infty)=p$ is no less than $6$, and the reverse part is also true. 
\end{proof}

The following technical lemma entailing ramification will be used in the proof of Theorem~\ref{thm-tangential}. %Before going into the investigation of the condition of constant curvature, we introduce a technical lemma characterizing
It characterizes when the lower-triangular matrix $L$ is diagonal. 

\begin{lemma}\label{lower triangular technique lemma}
Let $\gamma(z)= L\, Z_6(z)$ be a rational normal curve of degree $6$ in $\mathcal{H}_{0}^3$, with $L$ being lower-triangular and $L_{21}=0$. If $\gamma(z)$ is also ramified at $z=0$ with multiplicity no less than $2$, then $\gamma(0)$ lies in the closed $2$-dimensional orbit. Moreover, the following are equivalent.
$$
(1)\; L\;\text{ is diagonal{\emph.}} \quad (2)\; \gamma(0)\;\text{lies in the 1-dimensional orbit{\emph.}}\quad (3)\; L_{10}=0{\emph.} \quad (4)\; L_{65}=0{\emph .}
$$

\iffalse

\begin{enumerate}
\item[(1)] $L$ is diagonal{\emph,}
\item[(2)] $\gamma(0)$ lies in the $1$-dim orbit{\emph,}
\item[(3)] $L_{10}=0${\emph,}
\item[(4)] $L_{65}=0${\emph .}
\end{enumerate}

\fi

\end{lemma}
\begin{proof}
We continue to use the notation given in the proof of the preceding proposition. Write $a_i=\sum\limits_{j=0}^6\sqrt{\tbinom{6}{j}}L_{ij}z^j,~0\leq i\leq 6$. Then  $L$ is lower-triangular if and only if $\deg a_i=i,~0\leq i\leq 6$. Note that $[a_0: a_1: \cdots: a_6]$ is exactly the coordinates of $\gamma$, and $a_0$ is a constant. This implies that $a_4,a_5$ and $a_6$ can be solved as polynomials of $a_1,a_2$ and $a_3$ as in \eqref{a4,a5,a6graph1}.   

\iffalse

\begin{align}\label{a4,a5,a6graph}
\begin{split}
a_4&=\frac{\sqrt{2}a_1a_3-\sqrt{\frac{3}{5}}a_2^2}{a_0},\\
a_5&=\frac{\sqrt{\frac{6}{5}}a_1^2a_3-\frac{3}{5}a_1a_2^2}{a_0^2}-\frac{\sqrt{2}a_2a_3}{5a_0},\\
a_6&=\frac{3\sqrt{2}a_1a_2a_3-3\sqrt{\frac{3}{5}}a_2^3}{5a_0^2}-\frac{2a_3^2}{5a_0}.
\end{split}
\end{align}

\fi

By the first equation of \eqref{a4,a5,a6graph1} and $L_{21}=0$, we obtain 
%%\begin{align}
\begin{equation}\label{L41L42}
%%\begin{split}
L_{41}=\frac{\sqrt{2}(L_{10}L_{31}+L_{11}L_{30})}{L_{00}},\quad
L_{42}=\frac{1}{L_{00}}(\sqrt{2}L_{10}L_{32}+\frac{6\sqrt{2}}{\sqrt{15}}L_{11}L_{31}-\frac{6}{\sqrt{15}}L_{20}L_{22}).
\end{equation}
%%\end{split}
%%\end{align}

Assume $\gamma$ is also ramified at $z=0$ with multiplicity no less than $2$. Then we have 
{\small\begin{align}
    0&=(l_1,l_1)_4=*\,\frac{\partial^4 l_1}{\partial u^4}\frac{\partial^4 l_1}{\partial v^4}+*\,\frac{\partial^4 l_1}{\partial u^3 \partial v}\frac{\partial^4 l_1}{\partial u \partial v^3}+*\,\frac{\partial^4 l_1}{\partial u^2 \partial v^2}\frac{\partial^4 l_1}{\partial u^2 \partial v^2},\label{eq-l1l1-4}\\
    0&=(l_1, l_2)_4=*\,\frac{\partial^4 l_1}{\partial u^4}\frac{\partial^4 l_2}{\partial v^4}+*\,\frac{\partial^4 l_1}{\partial u^3 \partial v}\frac{\partial^4 l_2}{\partial u \partial v^3}+*\,\frac{\partial^4 l_1}{\partial u^2 \partial v^2}\frac{\partial^4 l_2}{\partial u^2 \partial v^2}+*\,\frac{\partial^4 l_1}{\partial u \partial v^3}\frac{\partial^4 l_2}{\partial u^3 \partial v}+*\,\frac{\partial^4 l_1}{ \partial v^4}\frac{\partial^4 l_2}{\partial u^4 }. \label{eq-l1l2-4}
\end{align}
}

Taking the fourth partial derivative with respect to $u$ on \eqref{eq-l1l1-4}, it follows from $\frac{\partial^6 l_1}{ \partial u^5  \partial v}=L_{11}\neq 0$ that $L_{31}=\frac{\partial^6 l_1}{ \partial u^3  \partial v^3}= 0$. Then considering the fourth partial derivative with respect to $v$ on \eqref{eq-l1l1-4}, we obtain 
$L_{41}=\frac{\partial^6 l_1}{ \partial u^2  \partial v^4}= 0$. Substituting these into the first equation of \eqref{L41L42}, we have %$\frac{\partial l_0}{ \partial u^3  \partial v^3}=$
$L_{30}=0$. 

Taking the fourth partial derivative with respect to $u$ on \eqref{eq-l1l2-4}, it follows from $\frac{\partial^6 l_1}{ \partial u^4  \partial v^2}=L_{21}= 0$ and $\frac{\partial^6 l_1}{ \partial u^3  \partial v^3}=L_{31}= 0$ that $L_{32}=\frac{\partial^6 l_2}{ \partial u^3  \partial v^3}= 0$. Then considering the first partial derivative with respect to $v$ followed by the third partial derivative with respect to $u$ on \eqref{eq-l1l2-4}, we obtain that 
$L_{42}=\frac{\partial^6 l_2}{ \partial u^2  \partial v^4}= 0$. Substituting these into the second equation of \eqref{L41L42}, we have %$\frac{\partial l_0}{ \partial u^3  \partial v^3}=$
$L_{20}=0$. 
Thus, we have proved that 
$$L_{20}=0,~~~L_{21}=0,~~~L_{30}=0,~~~L_{31}=0,~~~L_{32}=0,$$
which implies that the vanishing order of $a_2$ and $a_3$ at $z=0$ satisfy
\begin{equation}\label{eq-orda12}
    \ord(a_2)\geq 2,~~~~~~\ord(a_3)\geq 3. 
\end{equation}
%is no less than $2$ and $3$ respectively. 
It follows from \eqref{a4,a5,a6graph1} that 
$$\ord(a_4)\geq 3,~~~\ord(a_5)\geq 3,~~~\ord(a_6)\geq 5.$$
\iffalse
which is equivalent to saying that $L$ takes the following form, 
\begin{equation}\label{the form of L Lower triangular}
L=\begin{pmatrix}
L_{00} & 0 & 0 & 0 & 0 & 0 & 0\\
L_{10} & L_{11} & 0 & 0 & 0 &0 &0\\
0 & 0 & L_{22} & 0 & 0 &0 &0 \\
0 & 0 & 0 & L_{33} & 0 & 0 &0\\
0 & 0 &0 & L_{43} & L_{44} &0 &0\\
0 & 0 &0 &L_{53} & L_{54} & L_{55} &0\\
0 & 0 & 0 & 0 & 0 &  L_{65} & L_{66}
\end{pmatrix}.
\end{equation}
\fi
It also follows that the ramified point
$\gamma(0)=u^5(L_{00}u+\sqrt{6}L_{10}v)$
lies in the closed $2$-dimensional orbit. 

Note that $\gamma(0)$ lies in the $1$-dim orbit if and only if $L_{10}=0$, i.e., 
\begin{equation}\label{eq-orda1}
\ord(a_1)\geq 1, 
\end{equation}
which is equivalent to one of the following inequalities 
\begin{equation}\label{eq-orda456}
\ord(a_4)\geq 4,~~~\ord(a_5)\geq 5,~~~\ord(a_6)\geq 6,
\end{equation}
where \eqref{a4,a5,a6graph1} and \eqref{eq-orda12} are used. 
Note also that one of the seven inequalities in \eqref{eq-orda12} $\sim$ \eqref{eq-orda456} becomes an equality if and only if all of them do, if and only if  $L$ is diagonal. This finishes the proof.   
\end{proof}

\subsection{Necessary conditions for generally ramified holomorphic $2$-spheres to be of constant curvature}~

Let $\gamma:\mathbb{C}P^1\rightarrow G(2,5) $ be a generally ramified holomorphic $2$-sphere of degree $6$. By definition and Proposition~\ref{LowTriRam},  $\gamma$ can be parametrized as 
 \begin{equation}\label{eq-para}\gamma=A\cdot \left(E_0, E_1, E_2, E_3, E_4, E_5, E_6\right)\,L\,  Z_6(z),
 \end{equation}
%We may assume that $\gamma$ is parametrized as 
%\[\gamma(z)=A\cdot (E_0,\ldots,E_6) \,L \,Z_6(z),\]
where $A\in GL(5,\mathbb{C})$, $L$ is in the form of \eqref{L hard}, % a lower-triangular matrix 
and 
$Z_6(z)=\begin{pmatrix}
1 & \sqrt{6}z & \cdots & z^6
\end{pmatrix}^T,$
with $z$ being the standard parameter for the condition of constant curvature. %(Move the ramified point with multiplicity at least $2$ to $z=\infty$ by $SU(2)$, the isometric group of $\mathbb{C}P^1$, then $L$ is automatically lower-triangular following the proof of Proposition \ref{LowTriRam}).  %Let $a_i\triangleq \sum\limits_{j=0}^6\sqrt{\tbinom{6}{j}}L_{ij}z^j,~0\leq i\leq 6$. Since $L$ is lower-triangular, we have $\deg a_i=i,~0\leq i\leq 6$. 
Note that up to an isometry of $G(2,5)$, i.e., a $U(5)$-transformation, we may assume that $A$ is lower-triangular. 
\iffalse
Note that by using the automorphism of $\mathcal{H}^3_0$, we can re-choose $A\in GL(5,\mathbb{C})$ such that $L_{21}=0$. In fact, set $A_1\triangleq \begin{pmatrix}
1 & b \\
0 & 1 
\end{pmatrix}$ with $b=\frac{L_{21}}{\sqrt{10}L_{11}}$, then by \eqref{commutediag}, 
\[A\cdot (E_0,\ldots,E_6) \,L=(A\,  \rho^4(A_1^{-1}))\cdot  (E_0,\ldots,E_6) \,(\rho^6(A_1)\, L).\]
Since $\rho^6(A_1)$ is lower-triangular, $\rho^6(A_1) L$ is also lower-triangular, and 
\[(\rho^6(A_1) L)_{21}=L_{21}-b\sqrt{10}L_{11}=0.\]
\fi
%Given a lower-triangular matrix $A\in GL(5;{\mathbb C})$, 
Then, by the definition of  $\wedge^2$-action, %is identified with its counterpart $\tilde{A}\in GL(10;{\mathbb C})$. Then 
it follows from \eqref{basisOfV6} that $C\triangleq A\cdot \left(E_0, E_1, E_2, E_3, E_4, E_5, E_6\right)$ is of the form
\begin{equation}\label{GenrealC}
C= \begin{psmallmatrix} C_{00}&0&0&0&0&0&0
\\C_{10}&C_{11}&0&0&0&0&0
\\C_{20}&C_{21}&C_{22}&0&0&0&0
\\ C_{30}&C_{31}&C_{32}&C_{33}&0&0&0
\\ C_{40}&C_{41}&C_{42}&0&0&0&0
\\ C_{50}&C_{51}&C_{52}&C_{53}&0&0&0
\\C_{60}&C_{61}&C_{62}&C_{63}&C_{64}
&0&0
\\C_{70}&C_{71}&C_{72}&C_{73}&
C_{74}&0&0
\\C_{80}&C_{81}&C_{82}&C_{83}&C_{84}&C_{85}&0
\\C_{90}&C_{91}&C_{92}&C_{93}&C_{94}&C_{95}&C_{96}\end{psmallmatrix},
\end{equation}
%%where the inequality follows  from the product of diagonal entries of $A$ by $L_{02},L_{23},L_{34}$, 
which is a $10\times 7$ matrix %(denoted by $C$ in the sequel) 
obtained by column vectors $A\cdot E_k$ written relative to the standard basis $e_i\wedge e_j, ~0\leq i<j\leq 4$, in the lexicographic order. %For example, the first column of \eqref{GenrealC} is $Ae_0\wedge Ae_1$, etc.
We point out that $C_{ij}$ are quadratic in terms of the entries of $A$.  The following lemma is important.

\begin{lemma}\label{usefulAlgeLemma}
Let $G$ be a $10\times 7$ matrix of rank $7$ in the same form as on the right-hand side of \eqref{GenrealC} with $G_{33}G_{53}G_{64}G_{74}\neq 0$, and let the column vectors of $G$ be mutually orthogonal. If the holomorphic $2$-sphere $\gamma(z)\triangleq G\, Z_6(z)$ lies in a generic linear section of $G(2,5)$, % and {\color{blue}is generic}, 
then $G$ is in the form   
\begin{equation}\label{simple form}
\begin{psmallmatrix}G_{00}&0&0&0&0&0&0
\\ 0&G_{11}&0&0&0&0&0
\\ 0&0&G_{22}&0&0&0&0
\\0&0&0&G_{33}&0&0&0
\\0&0&G_{42}&0&0&0&0
\\0&0&0&G_{53}&0&0&0
\\0&0&0&0&G_{64}&0&0
\\0&0&0&0&G_{74}&0&0
\\0&0&0&0&0&G_{85}&0
\\0&0&0&0&0&0&G_{96}
\end{psmallmatrix},
 \end{equation}
%%\begin{equation}\label{simple form}\left( \begin {array}{ccccccc} {\it G_{00}}&0&0&0&0&0&0
%%\\ \noalign{\medskip}0&{\it G_{11}}&0&0&0&0&0\\ \noalign{\medskip}0&0&{
%%\it G_{22}}&0&0&0&0\\ \noalign{\medskip}0&0&0&{\it G_{33}}&0&0&0
%%\\ \noalign{\medskip}0&0&{\it G_{42}}&0&0&0&0\\ \noalign{\medskip}0&0&0&{
%%\it G_{53}}&0&0&0\\ \noalign{\medskip}0&0&0&0&{\it G_{64}}&0&0
%%\\ \noalign{\medskip}0&0&0&0&{\it G_{74}}&0&0\\ \noalign{\medskip}0&0&0&0
%%&0&{\it G_{85}}&0\\ \noalign{\medskip}0&0&0&0&0&0&{\it G_{96}}\end {array}
%% \right),\end{equation}
 where $\gamma$ is ramified at $z=0$ and $z=\infty$ with multiplicities at least $2$.
\end{lemma}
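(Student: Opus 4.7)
The plan is to extract enough constraints from three sources to force all the ``off-diagonal'' entries of $G$ in the zero pattern of \eqref{GenrealC} to vanish and arrive at the form \eqref{simple form}: namely the five Plücker quadrics \eqref{G(2,5)} satisfied by $\gamma(z)=GZ_6(z)$ identically in $z$, the mutual orthogonality of the columns of $G$, and the non-vanishing of the pivots $G_{33},G_{53},G_{64},G_{74}$ together with Castelnuovo genericity of the spanning $6$-plane.

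I would begin by writing $\gamma(z) = \sum_{k=0}^{6} \sqrt{\binom{6}{k}}\, G_{\bullet k}\, z^k$, where $G_{\bullet k}$ denotes the $k$-th column of $G$. The Plücker coordinates $p_{ij}(z)$ of $\gamma(z)$ are polynomials of degree $\leq 6$ in $z$, and substituting them into each of the five Plücker relations \eqref{G(2,5)} produces a polynomial identity in $z$ of degree $\leq 12$ whose coefficients are bilinear in the $G_{ij}$; each of the resulting $65$ coefficients must vanish. In parallel, the assumption that $G^*G$ be diagonal furnishes $\binom{7}{2}=21$ Hermitian relations among the columns. The idea is then to propagate vanishing from the extreme columns inward: the Plücker coefficient at $z^0$ (respectively $z^{12}$) involves only column $0$ (respectively column $6$), the next ones in $z^1$ and $z^{11}$ bring in columns $1$ and $5$, and so on. Descending the $z$-degree step by step, one uses the pivots $G_{33},G_{53},G_{64},G_{74}$ as non-zero denominators to solve iteratively for the off-diagonal entries of each intermediate column, closing the loop with orthogonality whenever Plücker alone does not pin down an entry. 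Castelnuovo genericity guarantees that the underlying rank-$4$ linear system of skew-symmetric forms is non-degenerate, so none of these division steps become illegal.

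The main obstacle is the combinatorial bookkeeping: with the $65$ bilinear Plücker-coefficient equations and the $21$ orthogonality relations, one accumulates a large, redundant quadratic system in the entries of $G$, and the challenge is to find a canonical elimination order — outward from the pivots and alternating Plücker with orthogonality — in which each step kills exactly one off-diagonal entry until only the pattern \eqref{simple form} remains. Once this pattern is in hand, the ramification of $\gamma$ at $z=0$ and $z=\infty$ follows from a short direct computation of $\partial\gamma/\partial z\wedge\partial\gamma/\partial z$: with the simplified form of $G$, the wedge picks up an extra factor of $z$ at the origin, and of $1/z$ (after the substitution $z=1/w$) at infinity, so both points are ramified.
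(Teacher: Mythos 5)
You have the right raw materials in view --- the five Pl\"ucker quadrics \eqref{G(2,5)} evaluated on $\gamma(z)=G\,Z_6(z)$, orthogonality of the columns, the pivot hypothesis $G_{33}G_{53}G_{64}G_{74}\neq 0$, and genericity --- but the proposal stops precisely where the content of the lemma begins. Your plan reduces everything to an elimination among the $65$ bilinear coefficient equations and $21$ orthogonality relations, and then concedes that ``the challenge is to find a canonical elimination order'' in which each step kills one entry; no such order is exhibited, and nothing is shown about the process closing up in the pattern \eqref{simple form}. The paper's proof does not perform coefficient extraction at all. It first uses orthogonality of the first five columns with the last two to clean out rows $8$ and $9$, giving $\gamma_8=G_{85}z^5$ and $\gamma_9=G_{96}z^6$ exactly; it then uses orthogonality in the packaged form $\langle\gamma(z),G_j\rangle=|G_j|^2\sqrt{\tbinom{6}{j}}\,z^j$, in particular $\overline{G_{64}}\gamma_6+\overline{G_{74}}\gamma_7=|G_4|^2\sqrt{15}\,z^4$ and $\overline{G_{33}}\gamma_3+\overline{G_{53}}\gamma_5+\overline{G_{63}}\gamma_6+\overline{G_{73}}\gamma_7=|G_3|^2\sqrt{20}\,z^3$; and it reads the Pl\"ucker relations as divisibility statements, e.g.\ $\gamma_6\gamma_7-\gamma_5\gamma_8+\gamma_4\gamma_9=0$ yields $z^5\mid\gamma_6\gamma_7$. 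A bootstrap on vanishing orders at $z=0$, played against the degree bounds visible in \eqref{GenrealC}, then forces each coordinate $\gamma_j$ to be a monomial $G_{jk}z^k$, which is exactly \eqref{simple form}. This monomiality mechanism is the missing idea; without it your large quadratic system has no demonstrated resolution.

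You also misplace the role of genericity. The divisions by $G_{33},G_{53},G_{64},G_{74}$ are legal because their nonvanishing is a hypothesis of the lemma, not a consequence of genericity. What genericity actually supplies --- and what your argument never extracts --- is that no coordinate $\gamma_i$ is identically zero: if $\gamma_i\equiv 0$, the $6$-plane spanned by $\gamma$ lies in a coordinate hyperplane of ${\mathbb P}(\wedge^2{\mathbb C}^5)$, so the dual $2$-plane linear system contains a rank-$2$ (decomposable) skew-symmetric form, contradicting that a generic system consists only of rank-$4$ forms. This nonvanishing is indispensable at several points of the bootstrap, for instance to conclude from the displayed monomial identity that $\gamma_6$ and $\gamma_7$ have the same vanishing order at $z=0$. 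Your final step is fine and matches the paper: under \eqref{simple form}, $\gamma'(0)\propto e_0\wedge e_2$ and $\gamma'(\infty)\propto e_2\wedge e_4$ are decomposable, so $\partial\gamma/\partial z\wedge\partial\gamma/\partial z$ vanishes at both points, i.e., they are ramified.
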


\begin{proof}
If \eqref{simple form} holds, then the last statement follows from 
\begin{align*}
\gamma^{\prime}(0)&=e_0\wedge e_2\in G(2,5),~~\gamma^{\prime\prime}(0)=G_{22}e_{0}\wedge e_3+G_{42}e_{1}\wedge e_2,~~\gamma^{\prime}(0)\wedge \gamma^{\prime\prime}(0)=0,\\
\gamma^{\prime}(\infty)&=e_2\wedge e_4 \in G(2,5),~~\gamma^{\prime\prime}(\infty)=G_{64}e_{1}\wedge e_4+G_{74}e_{2}\wedge e_3,~~\gamma^{\prime}(\infty)\wedge \gamma^{\prime\prime}(\infty)=0.
\end{align*}

Hence, we need only prove \eqref{simple form} in the following.
Since the first five columns of $G$ are perpendicular to the last two, we have  
\[\gamma(z)=\begin{psmallmatrix}
G_{00}&0&0&0&0&0&0
\\ G_{10}&G_{11}&0&0&0&0&0
\\ G_{20}&G_{21}&G_{22}&0&0&0&0
\\ G_{30}&G_{31}&G_{32}&G_{33}&0&0&0
\\ G_{40}&G_{41}&G_{42}&0&0&0&0
\\ G_{50}&G_{51}&G_{52}& G_{53}&0&0&0
\\ G_{60}&G_{61}&G_{62}&G_{63}&G_{64}&0&0
\\G_{70}&G_{71}&G_{72}&G_{73}&G_{74}&0&0
\\0&0&0&0&0&G_{85}&0
\\0&0&0&0&0&0&G_{96}
\end{psmallmatrix}
\begin{psmallmatrix}
1\\
\sqrt{6}z\\
\sqrt{15}z^2\\
2\sqrt{5}z^3\\
\sqrt{15}z^4\\
\sqrt{6}z^5\\
z^6\end{psmallmatrix}.\]
 %%\[\gamma(z)=\left( \begin {array}{ccccccc} {\it G_{00}}&0&0&0&0&0&0
%%\\ \noalign{\medskip}{\it G_{10}}&{\it G_{11}}&0&0&0&0&0
%%\\ \noalign{\medskip}{\it G_{20}}&{\it G_{21}}&{\it G_{22}}&0&0&0&0
%%\\ \noalign{\medskip}{\it G_{30}}&{\it G_{31}}&{\it G_{32}}&{\it G_{33}}&0&0&0
%%\\ \noalign{\medskip}{\it G_{40}}&{\it G_{41}}&{\it G_{42}}&0&0&0&0
%5\\ \noalign{\medskip}{\it G_{50}}&{\it G_{51}}&{\it G_{52}}&{\it G_{53}}&0&0&0
%%\\ \noalign{\medskip}{\it G_{60}}&{\it G_{61}}&{\it G_{62}}&{\it G_{63}}&{\it G_{64}}
%%&0&0\\ \noalign{\medskip}{\it G_{70}}&{\it G_{71}}&{\it G_{72}}&{\it G_{73}}&{\it 
%%G_{74}}&0&0\\ \noalign{\medskip}0&0&0&0&0&{\it G_{85}}&0
%%\\ \noalign{\medskip}0&0&0&0&0&0&{\it G_{96}}\end {array} \right)
%%\left(\begin {array}{c}1\\
%%\sqrt{6}z\\
%%\sqrt{15}z^2\\
%%2\sqrt{5}z^3\\
%%\sqrt{15}z^4\\
%%\sqrt{6}z^5\\
%%z^6\end {array}
%% \right).\]
 %where the columns (denoted by $\{G_j~|~j=0,\ldots,6\}$) of the above $10\times 7$ matrix are orthogonal to each other. 
We denote by $\{\gamma_j~|~j=0,\ldots,9\}$ the coordinates of $\gamma$. Then it is easy to see 
\[{\small\aligned&\deg(\gamma_0)=0, ~~~\deg(\gamma_1)\leq 1,~~~\deg(\gamma_2)\leq 2,~~~\deg(\gamma_3)\leq 3,~~~\deg(\gamma_4)\leq 2,\\
&\deg(\gamma_5)\leq 3, ~~~\deg(\gamma_6)\leq 4,~~~\deg(\gamma_7)\leq 4,~~~\deg(\gamma_8)=5,~~~\deg(\gamma_9)=6.\endaligned}\]
It follows from $\gamma\subset G(2,5)$ that 
\begin{align}{\small
\gamma_2 \gamma_4 - \gamma_1 \gamma_5 + \gamma_0 \gamma_7=0,\label{eq-g1}\\
\gamma_3 \gamma_4 - \gamma_1 \gamma_6 + \gamma_0 \gamma_8=0,\label{eq-g2}\\
\gamma_3 \gamma_5-\gamma_2 \gamma_6+\gamma_0 \gamma_9=0,\label{eq-g3}\\
\gamma_3 \gamma_7 - \gamma_2 \gamma_8 + \gamma_1 \gamma_9=0,\label{eq-g4}\\
\gamma_6 \gamma_7 - \gamma_5 \gamma_8 + \gamma_4 \gamma_9=0 \label{eq-g5}.
}\end{align}

Moreover, $\gamma_i\neq 0,~i=0\ldots,9$, since $\gamma$ lies in a generic linear section. 
Meanwhile, by the orthogonality of $\{G_j~|~j=0,\ldots,6\}$, we obtain
{\small $|G_j|^2\sqrt{\tbinom{6}{j}}z^j=\langle \gamma, G_j\rangle=\sum_{k=0}^9\overline{G_{kj}}\gamma_k,$} so that
{\small\begin{align}
\overline{G_{64}}\gamma_6+\overline{G_{74}}\gamma_7&=|G_4|^2\sqrt{15}z^4,\label{eq-g6}\\
\overline{G_{33}}\gamma_3+\overline{G_{53}}\gamma_5+\overline{G_{63}}\gamma_6+\overline{G_{73}}\gamma_7&=|G_3|^2\sqrt{20}z^3 \label{eq-g7}.
\end{align}}
\vskip -0.35cm

\vspace{2mm}

In the following, we will use the assumption $G_{33}G_{53}G_{64}G_{74}\neq 0$. Observe that $\gamma_8=G_{85}z^5$ and $\gamma_9=G_{96}z^6$. As a polynomial of $z$, we denote by $m(\gamma_j)$ the order of $\gamma_j$ at $z=0$. 

Combining \eqref{eq-g6} and $G_{64}G_{74}\neq 0$, and using $\deg(\gamma_6)=\deg(\gamma_7)=4$, it yields 
$0\leq  m(\gamma_6)=m(\gamma_7)\leq 4.$
Meanwhile \eqref{eq-g5} gives $z^5|\gamma_6\gamma_7$, which implies $5\leq m(\gamma_6)+m(\gamma_7)$. It follows that $m(\gamma_6)=m(\gamma_7)\geq 3$. Moreover,  we obtain $z^5 \mid \gamma_3\gamma_7$ in accord with \eqref{eq-g4}.

{\bf Claim 1.} $\gamma_6=G_{64} z^4$ and $\gamma_7=G_{74} z^4.$

Otherwise, we assume $m(\gamma_7)=3$. Then $2\leq m(\gamma_3)\leq 3$ and $m(\gamma_6)=3$. Using \eqref{eq-g7}, we have 
$m(\gamma_5)\geq 2$, which implies 
$z^4 \mid (\gamma_3 \gamma_5+\gamma_0 \gamma_9).$
It follows from \eqref{eq-g3} that $z^4 \mid \gamma_2\gamma_6$. As a result, $m(\gamma_2)\geq1$, 
and 
$z^6 \mid (\gamma_2 \gamma_8-\gamma_1 \gamma_9).$ Next, \eqref{eq-g4} yields $z^6 \mid \gamma_3\gamma_7$, and then $m(\gamma_3)=3$. Using \eqref{eq-g7} again, we obtain
$m(\gamma_5)\geq 3$. 
Coupled with \eqref{eq-g3}, $z^6 \mid \gamma_2\gamma_6$ can be deduced. Consequently, $m(\gamma_2)\geq3$, which contradicts $\deg(\gamma_2)\leq 2$. Hence the claim follows from the degrees of $\gamma_6$ and $\gamma_7$.

Now that we have $z^4 \mid (\gamma_1 \gamma_6-\gamma_0 \gamma_8)$, it follows from \eqref{eq-g2} that $z^4 \mid \gamma_3\gamma_4$. Since $\deg(\gamma_4)=2$, there follows $m(\gamma_3)\geq 2$.

{\bf Claim 2.} $\gamma_3=G_{33} z^3.$

Otherwise, we assume $m(\gamma_3)= 2$. Then $m(\gamma_4)=2$. Hence 
$z^8 \mid (\gamma_4 \gamma_9+\gamma_6 \gamma_7),$
 and $z^8\mid \gamma_5\gamma_8$, from which we can derive that $m(\gamma_5)\geq3$. Using \eqref{eq-g7} again, there yields that $m(\gamma_3)\geq3$ (by $G_{33}\neq 0$), which contradicts the assumption. Therefore $m(\gamma_3)=3$ and the Claim 2 follows from $\deg(\gamma_3)=3$. 

Now, $\gamma_5=G_{53} z^3$ follows from \eqref{eq-g7} and $\deg(\gamma_5)=3$. 

Using \eqref{eq-g5}, we obtain $z^8\mid \gamma_4\gamma_9$. Hence $\gamma_4=G_{42} z^2$ by $\deg(\gamma_4)=2$.

From \eqref{eq-g3}, we have $z^6\mid \gamma_2\gamma_6$. Therefore, $\gamma_2=G_{22} z^2$ due to that $\deg(\gamma_2)=2$.

Lastly, it follows from \eqref{eq-g4} that $z^7\mid \gamma_1\gamma_9$.  So $\gamma_1=G_{11} z$, as $\deg(\gamma_1)=1$.
\end{proof}
The method used in the proof of the above lemma can be generalized to prove the following important proposition. 

\begin{proposition}%\label{prop-lowertri}
Let $\gamma:\mathbb{C}P^1\rightarrow G(2,5) $ be a generally ramified holomorphic $2$-sphere of degree $6$ parametrized by \eqref{eq-para}. If $\gamma$ is of constant curvature, then $L$ is lower-triangular. %, then $\gamma$ belongs the the diagonal family.
%Moreover, $\gamma$ is ramified at two distinct points with multiplicities at least $2$. 
\end{proposition}
\begin{proof}
\iffalse
By definition and Proposition~\ref{LowTriRam}, $\gamma$ can be parametrized as 
\[\gamma(z)=B\cdot (E_0,\ldots,E_6)\cdot L \cdot Z_6(z),\]
where $B\in GL(5,\mathbb{C})$ and $L$ in the form \eqref{L hard}, under the standard parameter $z$ for the constant curvature condition (Move the tangent point to $z=\infty$ by $SU(2)$, the isometric group of $\mathbb{C}P^1$, then $L$ is automatically in the form \eqref{L hard} following the proof of Lemma \ref{tangential1case}). It follows from Theorem \ref{conj 1} and Proposition~\ref{LowTriRam}, we only need 
\fi
To show that $L$ is lower-triangular, it follows from Proposition~\ref{LowTriRam} that we need only prove that one of $L_{02},L_{23},L_{34}$ vanishes. %We proved it by contradictions. 

Suppose that in the following $L_{02}L_{23}L_{34}\neq 0.$
Similarly as before, we assume that %Let $B=U\cdot A$ be the QR decomposition of $B$, where $U$ is unitary, and 
$A$ is a lower-triangular matrix. Then
$G\triangleq A\cdot  (E_0,\ldots,E_6)\, L$
is a $10\times 7$ matrix with orthonormal columns and takes the following form %\eqref{GenrealC}, with
%As before, $A\cdot  (E_0,\ldots,E_6)$ is in the form \eqref{GenrealC}, and by \eqref{L hard}, $G=A\cdot  (E_0,\ldots,E_6)\cdot L$ is in the form 
\begin{equation}\label{inequality0}
G=\begin{psmallmatrix}
G_{00}&G_{01}&G_{02}&0&0&0&0
\\ G_{10}&G_{11}&G_{12}&0&0&0&0
\\ G_{20}&G_{21}&G_{22}&G_{23}&0&0&0
\\ G_{30}&G_{31}&G_{32}&G_{33}&G_{34}&0&0
\\ G_{40}&G_{41}&G_{42}&G_{43}&0&0&0
\\ G_{50}&G_{51}&G_{52}& G_{53}&G_{54}&0&0
\\ G_{60}&G_{61}&G_{62}&G_{63}&G_{64}&0&0
\\G_{70}&G_{71}&G_{72}&G_{73}&G_{74}&0&0
\\G_{80}&G_{81}&G_{82}&G_{83}&G_{84}&G_{85}&0
\\G_{90}&G_{91}&G_{92}&G_{93}&G_{94}&G_{95}&G_{96}
\end{psmallmatrix}, \quad G_{02}G_{23}G_{43}G_{34}G_{54}\neq 0,
\end{equation}
%%where
%%$$
%%begin{equation}\label{assume some nonvanish}
%%G_{02}G_{23}G_{43}G_{34}G_{54}\neq 0,
%%$$
%%\end{equation}
%where the inequality follows
where the inequality  comes from the product of diagonal entries of $A$ and  $L_{02}L_{23}L_{34}$. 

Since the first five columns of $G$ are perpendicular to the last two, we have  
\[\gamma(z)= G\, Z_6(z)=\begin{psmallmatrix}
G_{00}&G_{01}&G_{02}&0&0&0&0
\\ G_{10}&G_{11}&G_{12}&0&0&0&0
\\ G_{20}&G_{21}&G_{22}&G_{23}&0&0&0
\\ G_{30}&G_{31}&G_{32}&G_{33}&G_{34}&0&0
\\ G_{40}&G_{41}&G_{42}&G_{43}&0&0&0
\\ G_{50}&G_{51}&G_{52}& G_{53}&G_{54}&0&0
\\ G_{60}&G_{61}&G_{62}&G_{63}&G_{64}&0&0
\\G_{70}&G_{71}&G_{72}&G_{73}&G_{74}&0&0
\\0&0&0&0&0&G_{85}&0
\\0&0&0&0&0&0&G_{96}
\end{psmallmatrix}
\begin{psmallmatrix}
1\\
\sqrt{6}z\\
\sqrt{15}z^2\\
2\sqrt{5}z^3\\
\sqrt{15}z^4\\
\sqrt{6}z^5\\
z^6\end{psmallmatrix}. \]
\iffalse
\begin{align*}
G_{02}&=a_{00}a_{11}L_{02}\neq 0,\\
G_{23}&=\frac{\sqrt{15}}{5}a_{00}a_{33}L_{23}\neq 0 ,\\
G_{43}&=\frac{\sqrt{10}}{5}a_{11}a_{22}L_{23}\neq 0,\\
G_{34}&=\frac{\sqrt{5}}{5}a_{00}a_{44}L_{34}\neq 0,\\
G_{54}&=\frac{2\sqrt{5}}{5}a_{11}a_{33}L_{34}\neq 0,
\end{align*}
so we assume 

in the following.
\fi
We denote by $\{\gamma_j~|~j=0,\ldots,9\}$ the coordinates of $\gamma$. Then it is easy to see 
\[{\small\aligned&\deg(\gamma_0)\leq 2, ~~~\deg(\gamma_1)\leq 2,~~~\deg(\gamma_2)\leq 3,~~~\deg(\gamma_3)\leq 4,~~~\deg(\gamma_4)\leq 3,\\
&\deg(\gamma_5)\leq 4, ~~~\deg(\gamma_6)\leq 4,~~~\deg(\gamma_7)\leq 4,~~~\deg(\gamma_8)=5,~~~\deg(\gamma_9)=6,\endaligned}\]
satisfying \eqref{eq-g1} through \eqref{eq-g5}. The same constraint between \eqref{eq-g5} and \eqref{eq-g6} gives

\iffalse

It follows from that $\gamma$ lies in $G(2,5)$ that 
\begin{align}{\small
\gamma_2 \gamma_4 - \gamma_1 \gamma_5 + \gamma_0 \gamma_7=0,\label{eq-g1}\\
\gamma_3 \gamma_4 - \gamma_1 \gamma_6 + \gamma_0 \gamma_8=0,\label{eq-g2}\\
\gamma_3 \gamma_5-\gamma_2 \gamma_6+\gamma_0 \gamma_9=0,\label{eq-g3}\\
\gamma_3 \gamma_7 - \gamma_2 \gamma_8 + \gamma_1 \gamma_9=0,\label{eq-g4}\\
\gamma_6 \gamma_7 - \gamma_5 \gamma_8 + \gamma_4 \gamma_9=0 \label{eq-g5}.
}\end{align}

Moreover, $\gamma_i\neq 0,~i=0,\ldots,9$, since $\gamma$ lies in a generic linear section. From the orthogonality of $\{G_j~|~j=0,\ldots,6\}$, we obtain 
{\small $|G_j|^2\sqrt{\tbinom{6}{j}}z^j=\langle \gamma, G_j\rangle=\sum_{k=0}^9\overline{G_{kj}}\gamma_k,$} so that

\fi

{\small\begin{align}
\overline{G_{00}}\gamma_0+\overline{G_{10}}\gamma_1+\overline{G_{20}}\gamma_2+\overline{G_{30}}\gamma_3+\overline{G_{40}}\gamma_4+\overline{G_{50}}\gamma_5+\overline{G_{60}}\gamma_6+\overline{G_{70}}\gamma_7&=|G_0|^2\label{eq-g6},\\
%\overline{G_{02}}\gamma_0+\overline{G_{12}}\gamma_1+\overline{G_{22}}\gamma_2+\overline{G_{32}}\gamma_3+\overline{G_{42}}\gamma_4+\overline{G_{52}}\gamma_5+\overline{G_{62}}\gamma_6+\overline{G_{72}}\gamma_7&=|G_2|^2\sqrt{15}z^2\label{eq-g7},\\
\overline{G_{23}}\gamma_2+\overline{G_{33}}\gamma_3+\overline{G_{43}}\gamma_4+\overline{G_{53}}\gamma_5+\overline{G_{63}}\gamma_6+\overline{G_{73}}\gamma_7&=|G_3|^2\sqrt{20}z^3 \label{eq-g8},\\
\overline{G_{34}}\gamma_3+\overline{G_{54}}\gamma_5+\overline{G_{64}}\gamma_6+\overline{G_{74}}\gamma_7&=|G_4|^2\sqrt{15}z^4.\label{eq-g9}
\end{align}}
As a polynomial of $z$, we denote by $m(\gamma_j)$ the order of $\gamma_j$ at $z=0$. 

Since $\gamma_3=p_{04}(F),~\gamma_6=p_{14}(F)$, we have that $\gamma_3$ and $\gamma_6$ are linearly independent (since $F$ lies in a generic linear section). Hence combining this with $\deg(\gamma_3),\deg(\gamma_6)\leq 4$, we deduce
\begin{equation}\label{importan k-1}
k\triangleq \min\{m(\gamma_3),m(\gamma_6)\}\leq 3.
\end{equation}
It follows from $m(\gamma_8)=5$ and $m(\gamma_9)=6$, \eqref{eq-g4} and \eqref{eq-g5}, that $5\leq m(\gamma_3\gamma_7),m(\gamma_6\gamma_7).$
Since $\deg \gamma_7\leq 4$, by \eqref{importan k-1}, we obtain $1\leq k\leq 3,~~2\leq m(\gamma_7),$
while \eqref{eq-g9} and $G_{54}\neq 0$ yields
\begin{equation}\label{mg5-eq1}
m(\gamma_5)\geq \min\{k,m(\gamma_7)\}\geq 1.
\end{equation} 
Using \eqref{eq-g1} and \eqref{eq-g8}, we arrive at
\begin{align}
m(\overline{G_{23}}\gamma_2+\overline{G_{43}}\gamma_4)&\geq \min\{k,m(\gamma_7)\}\geq 1,\label{prestar2}\\
m(\gamma_2\gamma_4)&\geq \min\{k,m(\gamma_7)\}\geq 1.\label{prestar3}
\end{align}
We claim that 
%%\begin{align}
\begin{equation}\label{bootstrap-1}
%%\begin{split}
m(\gamma_2)\geq [\frac{\min\{k,m(\gamma_7)\}+1}{2}]\geq 1,\quad
m(\gamma_4)\geq [\frac{\min\{k,m(\gamma_7)\}+1}{2}]\geq 1.
%%\end{split}
%%\end{align}
\end{equation}
Indeed, if $m(\gamma_2)=m(\gamma_4)$, then the claim follows from \eqref{prestar3}. %%and parity of $\frac{\min\{k,m(\gamma_7)\}}{2}$ ({\color{red} ?$\min\{k,m(\gamma_7)\}$ }). 
If $m(\gamma_2)\neq m(\gamma_4)$, then by $G_{23}G_{43}\neq 0$ and \eqref{prestar2}, we obtain that 
\[\min\{m(\gamma_2), m(\gamma_4)\}=m(\overline{G_{23}}\gamma_2+\overline{G_{43}}\gamma_4)\geq \min\{k,m(\gamma_7)\} \geq [\frac{\min\{k,m(\gamma_7)\}+1}{2}].\]
This proves our claim. Next, from \eqref{eq-g4}, \eqref{eq-g5} and \eqref{mg5-eq1} we derive (because $\min\{k,m(\gamma_7)\}\geq 1$) that
\begin{align}\label{bootstrap}
\begin{split}
m(\gamma_3\gamma_7)&\geq \min\{5+[\frac{\min\{k,m(\gamma_7)\}+1}{2}],6\}\geq 6,\\
m(\gamma_6\gamma_7)&\geq \min\{5+\min\{k,m(\gamma_7)\},[\frac{\min\{k,m(\gamma_7)\}+1}{2}]+6\}\geq 6.
\end{split}
\end{align}
Since $1\leq k=\min\{\deg \gamma_3,\deg\gamma_6\}\leq 3,~\deg{\gamma_7}\leq 4$, we must have $2\leq m(\gamma_3), m(\gamma_6)$ and $3\leq m(\gamma_7)$; hence 
\begin{equation}\label{bootstrap3}
2\leq k\leq 3,~3\leq m(\gamma_7) \leq 4. 
\end{equation}
Now, we divide the discussion according to $m(\gamma_7)$.
 
\textbf{Case 1}: Assume that $m(\gamma_7)=3$. Then $\min\{k,m(\gamma_7)\}=k\geq 2$, so that \eqref{bootstrap} implies 
%%\begin{align*}
$$
m(\gamma_3\gamma_7)\geq 6,\quad
7\geq \deg \gamma_6+m(\gamma_7)\geq m(\gamma_6\gamma_7)\geq 7;
$$
%%\end{align*}
hence, $k=2,~m(\gamma_6)=4$, and $m(\gamma_3)\geq 3$. But then 
\[2=k=\min\{m(\gamma_3),m(\gamma_6)\}\geq \min\{3,4\}=3,\]
a contradiction.

\textbf{Case 2}: Assume that $m(\gamma_7)=4$. Then by \eqref{mg5-eq1}, \eqref{bootstrap-1}, \eqref{bootstrap} and \eqref{bootstrap3}, we obtain  
 \[1\leq m(\gamma_2),m(\gamma_4),~2\leq m(\gamma_3),m(\gamma_5),~3\leq m(\gamma_6).\]
We conclude that $G$ is in the form 
 \begin{equation}\label{some hard case}
G=A\cdot (E_0,\ldots,E_6)\, L=\begin{psmallmatrix}
G_{00}&G_{01}&G_{02}&0&0&0&0
\\ G_{10}&G_{11}&G_{12}&0&0&0&0
\\ 0&G_{21}&G_{22}&G_{23}&0&0&0
\\0&0&G_{32}&G_{33}&G_{34}&0&0
\\ 0&G_{41}&G_{42}&G_{43}&0&0&0
\\ 0&0&G_{52}& G_{53}&G_{54}&0&0
\\0&0&0&G_{63}&G_{64}&0&0
\\0&0&0&0&G_{74}&0&0
\\0&0&0&0&0&G_{85}&0
\\0&0&0&0&0&0&G_{96}
\end{psmallmatrix}.
\end{equation}
Consider the QR decomposition of $A\cdot (E_0,\ldots,E_6)=N\cdot L_{1}$, where $N$ is a $10\times 7$ matrix with orthonormal columns, and $L_{1}=(J_{ij})_{0\leq i,,j\leq 6}$ is a $7\times 7$ lower-triangular matrix. Since $A\cdot (E_0,\ldots,E_6)$ is in the form \eqref{GenrealC}, necessarily $N$ is given by 
\[N= \begin{psmallmatrix} 
N_{00}&0&0&0&0&0&0\\N_{10}&N_{11}&0&0&0&0&0
\\N_{20}&N_{21}&N_{22}&0&0&0&0
\\ N_{30}&N_{31}&N_{32}&N_{33}&0&0&0
\\ N_{40}&N_{41}&N_{42}&0&0&0&0
\\ N_{50}&N_{51}&N_{52}&N_{53}&0&0&0
\\N_{60}&N_{61}&N_{62}&N_{63}&N_{64}
&0&0
\\N_{70}&N_{71}&N_{72}&N_{73}&
N_{74}&0&0
\\ 0 & 0 & 0 & 0 & 0 &1& 0
\\ 0 & 0 & 0 & 0 & 0&0&1\end{psmallmatrix}= \begin{psmallmatrix} 
N_0 & 0_{1\times 2}\\
N_1 & 0_{1\times 2}\\
\vdots \\
N_7 & 0_{1\times 2}\\
0_{2\times 5} & Id_2\end{psmallmatrix},\]
where $N_j,~0\leq j\leq 7$ are row vectors in $\mathbb{C}^5$. Moreover, 
\begin{equation}\label{N64N74not0}
N_{64}N_{74}\neq 0,
\end{equation}
since $(N_{64},N_{74})$ is parallel to $(\frac{\sqrt{15}}{5}a_{11}a_{44},\frac{\sqrt{10}}{5}a_{22}a_{33})$ and the diagonal entries of $A$ are not zero. Now, from $G=N\cdot L_2\cdot L$ and the orthogonality of columns of $G$ and $N$, respectively, we must have that $L_2\cdot L=(H_{ij})_{0\leq i,j\leq 6}\in  U(7)$ is in the same form as \eqref{L hard} with
\begin{equation}\label{eq-H1}
H_{23}=J_{22}L_{23}\neq 0,~~H_{34}=J_{33}L_{34}\neq 0.
\end{equation}
Since $L_2\cdot L\in U(7)$, it is necessary that
\[L_2\cdot L=\begin{pmatrix}
H_0 & H_1 &\cdots & H_4 & 0_{5\times 2}\\
& 0_{2\times 5} & & &  \begin{pmatrix}
H_{55} & 0\\
0& H_{66}
\end{pmatrix} 
\end{pmatrix}\]
where $H_i,~0\leq i\leq 4$, are column vectors in $\mathbb{C}^5$ that form an orthonormal basis of $\mathbb{C}^5$, and $H_3$ and $H_4$ are in the form
%%\begin{align}
\begin{equation}\label{form of H3H4}
%%\begin{split}
H_3=(0,0,H_{23},H_{33},H_{43})^t,\quad H_4=(0,0,0,H_{34},H_{44})^t.
\end{equation}
%%\end{split}
%%\end{align}
Since $G=N\cdot L_2\cdot L$, by $G_{6j}=0,~0\leq j\leq 2,$ and $G_{7i}=0,~0\leq i\leq 3$ (see \eqref{some hard case}), we obtain 
\[N_6\cdot H_j=0,~~0\leq j\leq 2,\quad N_7\cdot H_{i}=0,~0\leq i\leq 3;\]
hence, $N_6\in \Span\{\overline{H_3^t},\overline{H_4^t}\}$ and $N_7\in \Span\{\overline{H_4^t}\}$, so that we conclude by \eqref{form of H3H4} that $N$ is in the following form 
\[N= \begin{psmallmatrix} 
N_{00}&0&0&0&0&0&0\\N_{10}&N_{11}&0&0&0&0&0
\\N_{20}&N_{21}&N_{22}&0&0&0&0
\\ N_{30}&N_{31}&N_{32}&N_{33}&0&0&0
\\ N_{40}&N_{41}&N_{42}&0&0&0&0
\\ N_{50}&N_{51}&N_{52}&N_{53}&0&0&0
\\0&0&N_{62}&N_{63}&N_{64}
&0&0
\\0&0&0&N_{73}&
N_{74}&0&0
\\ 0 & 0 & 0 & 0 & 0 &1& 0
\\ 0 & 0 & 0 & 0 & 0&0&1\end{psmallmatrix}.\]
Then the inner product of the third column with fifth column gives 
$\overline{N_{62}}N_{64}=0,$
and by $N_{64}\neq 0$ (see \eqref{N64N74not0}) we obtain $N_{62}=0$. Meanwhile, from $N_6\in \Span\{\overline{H_3^t},\overline{H_4^t}\}$ we deduce 
\[N_6=(0,0,0,N_{63},N_{64})=a\cdot \overline{H_3^t}+b\cdot \overline{H_4^t},\]
for some constant $a,b$. Then from $H_{23}\neq 0$ (see \eqref{eq-H1} and \eqref{form of H3H4}), we infer $a=0$; hence $N_6$ is parallel to $\overline{H_4^t}$. Thus,
$G_{63}=N_6\cdot H_3=0,$
which implies that $m(\gamma_6)=4$. Then 
\[2\leq k=\min\{m(\gamma_3),m(\gamma_6)\}=m(\gamma_3)\leq 3.\]
It follows from \eqref{eq-g3} and \eqref{eq-g9} that 
%%\begin{align}
\begin{equation}\label{finalbootstrp}
%%\begin{split}
m(\gamma_3\gamma_5)\geq 5,\quad m(\overline{G_{34}}\gamma_3+\overline{G_{54}}\gamma_5)\geq 4.
\end{equation}
%%\end{split}
%%\end{align}
From \eqref{mg5-eq1}, we have $m(\gamma_5)\geq k=\min\{m(\gamma_3),m(\gamma_6)\}=m(\gamma_3)$.
Combining \eqref{finalbootstrp}, $2\leq m(\gamma_3)\leq 3$ with $G_{34}G_{54}\neq 0$, we arrive at $m(\gamma_3)=m(\gamma_5)$. Then $k=m(\gamma_3)=m(\gamma_5)\geq \frac{5}{2}$ implies $k=3$. Next, from \eqref{bootstrap-1}, we see $2\leq m(\gamma_2),m(\gamma_4)$. Lastly by \eqref{eq-g2}, we arrive at
\[m(\gamma_1)+4=m(\gamma_1\gamma_6)\geq 5;\]
hence $m(\gamma_1)\geq 1$, so $G_{10}=0$. Then \eqref{eq-g6} gives 
$\overline{G_{00}}\gamma_0=|G_0|^2,$
so that $G_{02}=0$, contradictory to the inequality in \eqref{inequality0}. 

In short, one of $L_{02},L_{23},L_{34}$ vanishes so that $L$ is lower-triangular.
\end{proof}

Now we can finish the proof of Theorem~{\rm \ref{thm-tangential}}.%% \\~\vskip -2cm\\

\newenvironment{demo}[1][Proof of  Theorem~{\rm 6.1.}]{\proof[#1]}{\endproof}
\begin{demo}~

\iffalse
We may assume that $\gamma$ is parametrized as 
\[\gamma(z)=A\cdot (E_0,\ldots,E_6) \,L \,Z_6(z),\]
where $A\in GL(5,\mathbb{C})$, $L$ is a lower-triangular matrix and 
$$Z_6(z)=\begin{pmatrix}
1 & \sqrt{6}z & \cdots & z^6
\end{pmatrix}^T,$$ 
with $z$ being the standard parameter for the constant curvature condition (Move the ramified point with multiplicity at least $2$ to $z=\infty$ by $SU(2)$, the isometric group of $\mathbb{C}P^1$, then $L$ is automatically lower-triangular following the proof of Proposition \ref{LowTriRam}).  %Let $a_i\triangleq \sum\limits_{j=0}^6\sqrt{\tbinom{6}{j}}L_{ij}z^j,~0\leq i\leq 6$. Since $L$ is lower-triangular, we have $\deg a_i=i,~0\leq i\leq 6$. 
\fi
We continue to use the parameterization given in \eqref{eq-para}. 
Note that by using the automorphism of $\mathcal{H}^3_0$, we can re-choose $A\in GL(5,\mathbb{C})$ such that $L_{21}=0$. In fact, set $A_1\triangleq \begin{pmatrix}
1 & b \\
0 & 1 
\end{pmatrix}$ with $b=\frac{L_{21}}{\sqrt{10}L_{11}}$, then by \eqref{commutediag}, 
\[A\cdot (E_0,\ldots,E_6) \,L=(A\,  \rho^4(A_1^{-1}))\cdot  (E_0,\ldots,E_6) \,(\rho^6(A_1)\, L).\]
Since $\rho^6(A_1)$ is lower-triangular, $\rho^6(A_1) L$ is also lower-triangular, and so we derive
\[(\rho^6(A_1) L)_{21}=L_{21}-b\sqrt{10}L_{11}=0.\]
%The choice $b\triangleq \frac{L_{21}}{\sqrt{10}L_{11}}$ will realize our claim. %Then to show that $L$ is diagonal, it suffices to verify that $a_i=\sqrt{\tbinom{6}{j}}L_{ii}z^i,~i=0,\ldots,6$ are monomials.

The constant curvature condition of $\gamma$ implies that  
\begin{equation}\label{constant curvature in lower-triangular}
G\triangleq A\cdot (E_0,\ldots,E_6) \,L,
\end{equation}
is a $10\times 7$ matrix with orthonormal columns. Similarly as before, up to a $U(5)$-transformation, %Let $A\triangleq U\, L_1$ be the QR decomposition of $A$, where $U$ is unitary and $L_1$ is lower-triangular. 
we may assume $A$ is lower-triangular.  Since $L$ is lower-triangular, we see that $G$ %By \eqref{constant curvature in lower-triangular}, we obtain that 
has the form as \eqref{GenrealC}.  % with orthonormal columns. 
It is easy to verify that %and $G\, Z_6(z)$ is a rational normal curve in $G(2,5)$ with 
$G_{33}G_{53}G_{64}G_{74}\neq 0$. %(they are products of diagonals of $L_1$ and $L$). 

It follows from Lemma \ref{usefulAlgeLemma} that $G$ must be in the form of \eqref{simple form}. Moreover, $G\,Z_6(z)$ and 
\[\mu(z)\triangleq A^{-1}\cdot G\, Z_6(z)= (E_0,\ldots,E_6)\, L\,  Z_6(z)\] 
are ramified at $z=0$ and $z=\infty$ with multiplicities at least $2$. % (the action of $GL(5;\mathbb{C})$ commutes with wedge). 
Thus we can apply Lemma \ref{lower triangular technique lemma} to the curve $\mu(z)$. It follows from the proof of Lemma \ref{lower triangular technique lemma} that now 
$$L_{01}=L_{21}=L_{31}=L_{41}=L_{51}=L_{61}=0, \quad L_{05}=L_{15}=L_{25}=L_{35}=L_{45}.$$
%$L$ is in the form of \eqref{the form of L Lower triangular}. 
%the Lemma \ref{lower triangular technique lemma} works, and $L$ is in the form \eqref{the form of L Lower triangular}. 
To prove that $L$ is diagonal, we need only show $L_{65}=0$.
%In the following, we show that $L$ is diagonal by showing $L_{65}=0$ under the constant curvature assumption.

%where $G$ is in the form of \eqref{simple form} as proved above. Write the lower triangular matrix $L_{1}^{-1}$ by $(b_{ij})_{0\leq i,j\leq 4}$. Compare the second columns of both sides of \eqref{compare column final lowTri}, 
Since $G$ is in the form of \eqref{simple form}, comparing the second column of both sides of  
\begin{equation}\label{compare column final lowTri}
A^{-1}\cdot G=(E_0,\ldots,E_6)\, L,
\end{equation}
we deduce
\[G_{11}\,A^{-1}\cdot e_0 \wedge  A^{-1}\cdot e_2 = L_{11}e_0\wedge e_2,\]
whence $ A^{-1}\cdot e_2 \in \Span\{e_0,e_2\}$. %=(b_{22}e_2+b_{32}e_3+b_{42}e_4)\in \Span\{e_0,e_2\}$, thus 
%\begin{equation}\label{key observation}
%b_{32}=b_{42}=0.
%\end{equation}
Then comparing the penultimate column of both sides of \eqref{compare column final lowTri}, %and using \eqref{key observation}, 
we have 
\[L_{55} e_{2}\wedge e_4+L_{65} e_3\wedge e_4=G_{85}\,A^{-1}\cdot e_2 \wedge  A^{-1}\cdot e_4\in \Span\{e_0\wedge e_4, e_2\wedge e_4\},\] %= G_{85}b_{22}b_{44}e_2\wedge e_4,\]
which implies $L_{65}=0$. Hence $L$ is diagonal. 

Furthermore, due to that $A$ is lower-triangular, we can also derive that 
$$A^{-1}\cdot e_2\equiv 0\mod e_2,\quad\quad\quad A^{-1}\cdot e_4\equiv 0\mod e_4,$$
and then $A^{-1}\cdot e_0\equiv 0\mod e_0$. By comparing the first and last columns of both sides of \eqref{compare column final lowTri}, we have $A^{-1}\cdot e_i\equiv 0\mod e_i,~i=1,3$.   In conclusion, we have arrived at that $A$ is diagonal. Therefore, the curve $\gamma$ belongs to the diagonal family.
%by Lemma \ref{lower triangular technique lemma}. 
\iffalse
It follows from the first two columns of $G$ that 
$$Ae_0\wedge Ae_1 \equiv 0 \mod (e_0\wedge e_1), \quad \quad \quad Ae_0\wedge Ae_2 \equiv 0 \mod (e_0\wedge e_2).$$
Hence, $Ae_0,Ae_1 \in \Span \{e_0,e_1\}$, and $Ae_0,Ae_2 \in \Span \{e_0,e_2\}$. As a result, $Ae_0\equiv 0\mod e_0$. %, i.e., $a_{j0}=0,~j=1,\ldots,4$. 
Due to that $A$ is lower-triangular, we obtain 
$Ae_1 \equiv 0\mod e_1$ and $Ae_2\equiv 0\mod e_2$. 
%hence, $a_{k1}=0,~k=2,\ldots,4$, and $a_{l2}=0,~l=3,4$. 
A similar observation on the $5$th column of $G$ gives that $Ae_2\wedge Ae_3 \equiv 0 \mod (e_2\wedge e_3)$, which implies $A e_3\equiv 0 \mod e_3$. In conclusion, we derive that $A$ is diagonal.
%Moreover, $L_1$ is also diagonal by Corollary \ref{lower-triangular lemma}, 
Therefore the curve $\gamma$ belongs to the diagonal family.
\fi
\end{demo}
\iffalse
Immediately we obtain the following corollary
\begin{corollary}
Let $\gamma:\mathbb{C}P^1\rightarrow G(2,5) $ be a rational normal curve of degree $6$ belongs to the Lower-Triangular Family. If one of the following statement holds,
\begin{enumerate}
\item if $\gamma$ is ramified at $n$ distinct points, and $n\neq 2${\emph ;}
\item or if $\gamma$ is ramified at two distinct points, but the multiplicity of one point is strictly less than $2${\emph ;}
\end{enumerate}
then $\gamma$ is not of constant curvature.
\end{corollary}
\fi

\section{Existence and uniqueness results for the diagonal family.}\label{sec7}
It follows from Theorem \ref{thm-tangential} that to classify generally ramified  constantly curved holomorphic $2$-spheres in $G(2,5)$, we need only consider those in the diagonal family, which are determined by diagonal matrices $A \in GL(5,\mathbb{C})$ and complex numbers $\{\omega_0, \omega_1, \ldots, \omega_6\}$ satisfying  %satisfying \eqref{perturbed}. %in the standard Fano $3$-fold $\mathcal{H}^3_0$.
{\small\begin{equation}\label{perturbed}
\aligned
&\omega_0\omega_4-4\omega_1\omega_3+3\omega_2^2=0,\quad \omega_0\omega_5-3\omega_1\omega_4+2\omega_2\omega_3=0,\quad\omega_0\omega_6-9\omega_2\omega_4+\\
&8\omega_3^2=0,\quad 
\omega_2\omega_6-4\omega_3\omega_5+3\omega_4^2=0,\quad
\omega_1\omega_6-3\omega_2\omega_5+2\omega_3\omega_4=0,
\endaligned
\end{equation}}
to guarantee that the holomorphic $2$-sphere parameterized as in \eqref{GenericG} lives in $G(2,5)$. 

In this section, we will pin down the class of diagonal matrices $A\in GL(5,\mathbb{C})$ that warrants the existence of constantly curved holomorphic $2$-spheres of degree $6$, and meanwhile find the number of such  $2$-spheres in each of these Fano $3$-folds $A(\mathcal{H}^3_0)$.
Assume $\varphi $ is a constantly curved holomorphic $2$-sphere in the diagonal family given by the data $A=\diag\{a_{00}, a_{11}, \cdots, a_{44}\}$ and  $\{\omega_0, \omega_1, \cdots, \omega_6\}$ satisfying \eqref{perturbed}. It follows from Definition~\ref{defn2} that 
{\small 
\begin{equation}\label{Ccurve}
 \aligned
& \varphi (z)=a_{00}a_{11}\omega_0\,e_0\wedge e_1+ \sqrt{6}a_{00}a_{22}\,\omega_1\, z\, e_0\wedge e_2 
 +3a_{00}a_{33}\,\omega_2\, z^2 \,e_0\wedge e_3\\
&+ \sqrt{6}a_{11}a_{22}\, \omega_2\, z^2\, e_1\wedge e_2
+2a_{00}a_{44}\,\omega_3\,z^3\, e_0\wedge e_4
+4a_{11}a_{33}\,\omega_3\,z^3 \,e_1\wedge e_3\\
&+3a_{11}a_{44}\,\omega_4\,z^4 \,e_1\wedge e_4
+\sqrt{6}a_{22}a_{33}\,\omega_4\,z^4 \,e_2\wedge e_3
+\sqrt{6}a_{22}a_{44}\,\omega_5\, z^5 \,e_2\wedge e_4\\
&+a_{33}a_{44}\,\omega_6\,z^6\, e_3\wedge e_4, 
\endaligned
\end{equation}
}
and %Moreover, since $\varphi $ is of constant curvature, there follows the length constraints
{\small 
\begin{equation}\label{krull}
\aligned
&\frac{(9a_{00}^2a_{33}^2+6a_{11}^2a_{22}^2)|\omega_2|^2}{15}
=\frac{(a_{00}^2a_{44}^2+4a_{11}^2a_{33}^2)|\omega_3|^2}{5}=a_{00}^2a_{11}^2|\omega_0|^2=\\&\frac{(9a_{11}^2a_{44}^2+6a_{22}^2a_{33}^2)|\omega_4|^2}{15}
=a_{00}^2a_{22}^2|\omega_1|^2=a_{22}^2a_{44}^2|\omega_5|^2=a_{33}^2a_{44}^2|\omega_6|^2=1.
\endaligned
\end{equation}
}
\begin{remark}\label{rk4}
We point out that $\varphi$ has the following standard parameterization in the sense of section {\rm \ref{JP}}. %give a new $1$-parameter family of examples in terms of Jiao-Peng's language (see section {\rm \ref{JP}}) to characterize the diagonal family as being equivalent to %%multiplying the $j$-th column of \eqref{JP1} by $a_{jj}$, where $j=0,\ldots,4$. Hence, \eqref{Ccurve}, in comparison to~\eqref{JP1}, is converted to
\begin{equation}\label{JP2}
\begin{pmatrix}\varphi_1(z)\\ \varphi_2(z)\end{pmatrix}=
\begin{pmatrix}
1 & 0 & -\sqrt{6}\frac{\omega_2 a_{22}}{\omega_0 a_{00}}\,z^2 & -4\frac{\omega_3 a_{33}}{\omega_0 a_{00}}\,z^3 & -3\frac{\omega_4 a_{44}}{\omega_0 a_{00}}\,z^4\\
0 &1 & \sqrt{6}\frac{\omega_1 a_{22}}{\omega_0 a_{11}}\,z& 3\frac{\omega_2 a_{33}}{\omega_0 a_{11}}\,z^2 & 2\frac{\omega_3 a_{44}}{\omega_0 a_{11}}\,z^3
\end{pmatrix}.
\end{equation}
In Jiao and Peng's approach, they considered collectively the undertermined variables
{\small\[\aligned
&\alpha_2\triangleq  -\sqrt{6}(\omega_2 a_{22})/(\omega_0 a_{00}),~\beta_3\triangleq -4(\omega_3 a_{33})/(\omega_0 a_{00}),~\varphi _4\triangleq  -3(\omega_4}{a_{44})/(\omega_0}{a_{00}),\\
&u_1\triangleq \sqrt{6}(\omega_1 a_{22})/(\omega_0 a_{11}), ~v_2\triangleq 3(\omega_2 a_{33})/(\omega_0 a_{11}),~z_3\triangleq 2(\omega_3 a_{44})/(\omega_0 a_{11}).
\endaligned\]}
Then the constant curvature condition \eqref{krull} is equivalent to 
 
{\small\begin{align}\label{constant curvature 
in JP}
\begin{split}
&|u_1|^2=6,~|v_2|^2+|\alpha_2|^2=15,~|z_3|^2+|\beta_3|^2=20\\
&|\varphi _4|^2+|\alpha_2v_2-\beta_3u_1|^2=15,~|\alpha_2 z_3-\varphi _4u_1|^2=6,~|\beta_3z_3-\varphi _4v_2|^2=1.
\end{split}
\end{align}}

%\iffalse

The standard Veronese curve in \eqref{eq-standard} %%in Remark {\rm \ref{onlykonwnexample}} 
corresponds to the solution
{\small\[(\alpha_2,\beta_3,\varphi _4,u_1,v_2,z_3)=(-\sqrt{6},-4,-3,\sqrt{6},3,2).\]
Branching out, observe that after fixing $(\alpha_2,\varphi _4,u_1,v_2)=(-\sqrt{6},-3,\sqrt{6},3)$, we have that the system of equations \eqref{constant curvature in JP} reduces to 
{\small\[|z_3|^2+|\beta_3|^2=20,\;\,|\beta_3+3|^2=1,\;\,|z_3-3|^2=1,\;\,|\beta_3z_3+9|^2=1.\]}}
Set 
\begin{equation}\label{fmly}
\beta_3\triangleq -3+e^{\sqrt{-1}\theta},~z_3\triangleq 3+e^{\sqrt{-1}\varphi}.
\end{equation} 
From the first equation we derive
$\cos\theta=\cos\varphi;$
and so $\varphi=\pm \theta$. If $\varphi=-\theta$, then the last equation above gives $\theta=0$ or $\pi$. Therefore without losing generality, we may set $\varphi=\theta$ in any event. Consequently, we obtain a $1$-parameter family of solutions
 \begin{equation}\label{family33}
%\begin{pmatrix}f(z)\\g(z)\end{pmatrix}=
\begin{pmatrix}
1 & 0 & -\sqrt{6}z^2 & (-3+e^{\sqrt{-1}\theta})z^3 & -3z^4\\
0 & 1 & \sqrt{6}z & 3z^2 &  (3+e^{\sqrt{-1}\theta})z^3\\
\end{pmatrix},
\end{equation}
hitherto unknown in the literature, to the authors' knowledge.

Though the simple perturbation \eqref{fmly} generates the explicit $1$-parameter family \eqref{family33}, in general, however, without further geometric clue it is a difficult task to completely classify the system~\eqref{constant curvature in JP}. As our analysis has revealed up to now, the nature of the classification lies in that one must perturb in certain Fano $3$-folds dictated by \eqref{Ccurve} to achieve the classification. In the following, we will present an algebro-geometric approach to describe all solutions to the diagonal system \eqref{Ccurve}. 

\end{remark}

\iffalse
\noindent from which we condense the preceding Theorem \ref{thm-tangential } in the following concrete statement.

\begin{proposition}
Any constantly curved rational normal curve $\varphi $ of degree $6$ of the diagonal family living in ${\mathcal H}^3=A({\mathcal H}^3_0)$, where diagonal matrix $A\triangleq \diag(a_{00},\cdots,a_{44})$, is the transformation by $A$ of a curve of the form
{\small $$
[\omega_0: \omega_1\sqrt{6} z:\cdots:\omega_5\sqrt{6} z^5:\omega_6\, z^6]
$$
satisfying the constraints in~\eqref{perturbed}, where%%in the perturbed family~\eqref{cCURVE} with
\begin{equation}\label{T}
\omega_i\triangleq e^{\sqrt{-1}\theta_i}/|T_i|,~~~0\leq i\leq 6,\quad\text{and}
\end{equation}}
%%where $|T_i|$ is the norm of the $i${th} column of $\bf B$ in~\eqref{DIAG} given by
$$
\aligned
&|T_0|^2=a_{00}^2a_{11}^2,~~~|T_1|^2=a_{00}^2a_{22}^2,~~~|T_5|^2=a_{22}^2a_{44}^2,~~~|T_6|^2=a_{33}^2a_{44}^2,\\
&\!\!\!\!|T_2|^2=\frac{9a_{00}^2a_{33}^2+6a_{11}^2a_{22}^2}{15},|T_3|^2=\frac{a_{00}^2a_{44}^2+4a_{11}^2a_{33}^2}{5},|T_4|^2=\frac{9a_{11}^2a_{44}^2+6a_{22}^2a_{33}^2}{15}.
\endaligned
$$
\end{proposition}
\fi

Set 
%{\small
$$\omega_i\triangleq \sqrt{t_i} e^{\sqrt{-1}\theta_i},\quad\quad i=0,\ldots,6.$$%} 
It follows from the condition of constant curvature \eqref{krull} that 
{\small\begin{equation}\label{eq-t01234}
\aligned
&t_0=1/a_{11}^2,~~~t_1=1/a_{22}^2,~~~~t_2={15}/{(9a_{00}^2a_{33}^2+6a_{11}^2a_{22}^2)},~~t_6=1/(a_{33}^2a_{44}^2),\\
&t_3={5}/{(a_{00}^2a_{44}^2+4a_{11}^2a_{33}^2)},~t_4={15}/{(9a_{11}^2a_{44}^2+6a_{22}^2a_{33}^2)},~~t_5=1/(a_{22}^2a_{44}^2).
\endaligned
\end{equation}}
\begin{remark}\label{rrk} 
For the detailed analysis to follow on the length constraints \eqref{krull}, without loss of generality through scaling, we may assume that $a_{00}=1$ and $a_{jj}\in \mathbb{R}^{+},~1\leq j\leq 4$ {\rm(}by a diagonal unitary transformation in $U(5)${\rm)}. Moreover, it follows from Lemma~{\em \ref{usefullemma}} that the transformation $\rho^4(\diag\{\lambda,1\})=\diag\{1, \lambda, \lambda^2, \lambda^3, \lambda^4\}$ 
preserves ${\mathcal H}_0^3$ for any $\lambda\in \mathbb{C}^*$. As a consequence, after multiplying by an appropriate real $\lambda$, we may furthermore assume  $a_{22}=a_{33}$. This process is equivalent to applying a M\"{o}bius reparametrization to the $2$-sphere $\varphi$ by $z\mapsto\lambda z$.

Similarly, we assume further that $\theta_0=\theta_6=0$, which follows from dehomogenizing to eliminate $\theta_0$ and introducing a rotational reparametrization of the $2$-sphere $\varphi $ to eliminate $\theta_6$.

\end{remark}

%$$t_0\triangleq{1}/{a_{11}^2}, \quad \quad \quad t_1\triangleq{1}/{a_{22}^2}, \quad \quad \quad t_6\triangleq{1}/({a_{33}^2a_{44}^2}).$$

%$t_i\triangleq {1}/{|T_i|^2},~i=0,\ldots,6$, then we arrive at that $|\omega_i|=\sqrt{t_i}$ and (see two lines below \eqref{T})
%{\small
Combining \eqref{eq-t01234} with the above normalization, we have  
{\small\begin{equation}\label{t234}%%{t016}
\aligned
%&t_0={1}/{a_{11}^2},\;t_1={1}/{a_{22}^2},\;t_6={1}/{a_{33}^2a_{44}^2},\;t_5=t_6,\\
%%\begin{tiny}\begin{equation}\label{t234}
&t_2=\frac{5 t_0 t_1}{(3 t_0+2)},\;t_3=\frac{5 t_0 t_1 t_6}{(t_0 t_1^2+4 t_6)},\;t_4=\frac{5 t_0 t_1^2 t_6}{(3 t_1^3+2 t_0 t_6)}, \;t_5=t_6. 
\endaligned
\end{equation}}
Moreover, it follows from \eqref{perturbed} that the angles $\theta_i$ of $\omega_i$ satisfy
{\small
 \begin{equation}\label{angle}
\aligned
&\sqrt{t_0t_4}=4\sqrt{t_1t_3}e^{\sqrt{-1}(\theta_1+\theta_3-\theta_4)}-3t_2e^{\sqrt{-1}(2\theta_2-\theta_4)},\\
&\sqrt{t_0t_5}=3\sqrt{t_1t_4}e^{\sqrt{-1}(\theta_1+\theta_4-\theta_5)}-2\sqrt{t_2t_3}e^{\sqrt{-1}(\theta_2+\theta_3-\theta_5)},\\
&\sqrt{t_0t_6}=9\sqrt{t_2t_4}e^{\sqrt{-1}(\theta_2+\theta_4)}-8t_3e^{\sqrt{-1}2\theta_3}.
\endaligned
\end{equation}}
\begin{remark}\label{3imply5}
Conversely, given a solution 
{\small$\{t_0,\,t_1\, \cdots, t_6\}$ $\subset$ $\mathbb{R}^+$} and {\small$\{\,\theta_1\, \cdots, \theta_5\}$ $\subset$ $\mathbb{R}$}
to \eqref{t234} and \eqref{angle}, by solving $a_{ii}$ from $t_i$ and defining $\omega_i=t_i e^{\sqrt{-1}\theta_i}$, we can obtain a constantly curved holomorphic $2$-sphere of degree $6$ in $G(2,5)$ parameterized as in \eqref{Ccurve}. %%Here, we have used the fact that the last two equations in \eqref{perturbed} can be deduced from the first three (since $\omega_0\neq 0$), which are equivalent to the ones in \eqref{angle}. 
\end{remark}
%\begin{remark}\label{3imply5}
%Due to $\omega_0\neq 0$, the above three equations actually imply the remaining two in \eqref{perturbed}.
%\end{remark}

We point out that the three equations in \eqref{angle} are not independent by the following Lemma \ref{solve by XYZ}. In fact, set
{\small 
\begin{equation}\label{defining0}
\aligned
&x_1\triangleq e^{\sqrt{-1}(\theta_1+\theta_3-\theta_4)},\;y_1\triangleq e^{\sqrt{-1}(2\theta_2-\theta_4)},\;
x_2\triangleq e^{\sqrt{-1}(\theta_1+\theta_4-\theta_5)},\;\\&y_2\triangleq e^{\sqrt{-1}(\theta_2+\theta_3-\theta_5)},\; x_3\triangleq e^{\sqrt{-1}(\theta_2+\theta_4)},\;\;\;y_3\triangleq e^{\sqrt{-1}(2\theta_3)}.
\endaligned
\end{equation}}

Taking norm squared on both sides of \eqref{angle}, we see from the realness of $t_0,\cdots,t_6$ that 
{\small 
\begin{equation}\label{DEF}
\aligned
&h_1\triangleq v-uw=0,\quad  h_2\triangleq u^2-X u + 1=0, \quad h_3\triangleq v^2-Y v + 1=0,\\
&h_4\triangleq w^2-Zw + 1=0,
\endaligned
\end{equation}}
where, 
{\small 
\begin{equation}\label{XYZt0t1t6}
\aligned
&u= x_1/y_1,\quad v= x_2/y_2,\quad w= x_3/y_3,\\
&X= (9 t_2^2+16 t_1 t_3-t_0 t_4)/(12 t_2 \sqrt{t_1 t_3}),\\
&Y= (4 t_2 t_3+9 t_1 t_4-t_0 t_5)/(6 \sqrt{t_2 t_3}\sqrt{t_1t_4}),\\
&Z= (64 t_3^2+81 t_2 t_4-t_0 t_6)/(72 t_3 \sqrt{t_2 t_4}).
\endaligned
\end{equation}
}

We first solve \eqref{DEF} by viewing $\{X,Y,Z\}$ as indeterminates.  Define 
{\small \begin{equation}\label{DH}
H\triangleq -XYZ+X^2+Y^2+Z^2-4.
\end{equation}}
\iffalse
Consider the polynomial ideal 
$I\triangleq \langle f_1,f_2,f_3,f_4\rangle\subseteq \mathbb{C}[v,u,w,X,Y,Z]$ generated by \eqref{DEF}.
Under the lexicographic order with respect to $\{v,u,w,X,Y,Z\}$, the Groebner Basis of $I$ is given by 
%{\small 
\begin{align*}
G:&=\{-XYZ+{X}^{2}+{Y}^{2}+{Z}^{2}-4,\quad-Zw+{w}^{2}+1,\\
&-X{Z}^{2}+XZw+{Z}^{2}u+Y
Z-2\,Yw+2\,X-4\,u,\\
&-XYZ+XYw-YZu+2\,Xu+{Y}^{2}+2\,{Z}^{2}-2\,Zw-4,\\
&XZ-Xw-Zu+2\,uw-Y,\quad XZ-Xw-Zu-Y+2\,v,\\
&-XYZ+XYw-YZu+{Y}^{2}+2\,{Z}^{2}-2\,Zw+2\,{u}^{2}-2\}.
\end{align*}
%}
Consequently, by the Elimination Theory (see \cite[Thm 2, p.~116]{CLD2007}), the $3$rd elimination ideal $%I_3\triangleq 
I\cap \mathbb{C}[X,Y,Z]=\langle H \rangle$ is principal and generated by 
%{\small 
\begin{equation}\label{DH}
H\triangleq -XYZ+X^2+Y^2+Z^2-4.
\end{equation}
%}
Since the leading coefficients of $f_1,\ldots,f_4$, under the lexicographic order with respect to $\{v,u,w,X,Y,Z\}$ are non-zero, a repeated application of the Extension Theorem (see \cite[Cor 4, p.~120]{CLD2007}) yields the following. 
\fi 
\begin{lemma}\label{solve by XYZ} If $\{v, u, w, X, Y, Z\}$ solves  the system \eqref{DEF}, then  $H=0$. Conversely, given any complex solution $(X_0,Y_0,Z_0)$ to $H=0$, there always exits $(v_0,u_0,w_0)\in \mathbb{C}^3$, such that $(v_0,u_0,w_0,X_0,Y_0,Z_0)$ solves this system.

Moreover, when the solution $X_0,Y,Z_0$ to $H=0$ are real, $|v_0|=|u_0|=|w_0|=1$ if and only if $X_0,Y_0,Z_0\in [-2,2]$, in which case there are at most two solutions, namely, $(v_0,u_0,w_0)$ and its complex conjugate $(\overline{v_0},\overline{u_0},\overline{w_0})$, which are distinct unless $X_0^2=Y_0^2=Z_0^2=4$ and $X_0Y_0Z_0=8$.
\end{lemma}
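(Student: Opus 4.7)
The plan is to solve the system in the order $(u, w, v)$, using that $h_2, h_3, h_4$ present $X, Y, Z$ as the elementary symmetric sums $u + u^{-1}$, $v + v^{-1}$, $w + w^{-1}$, and then to exploit $v = uw$ from $h_1$ to eliminate $u, v, w$. For the forward direction, note that $h_2 = 0$ forces $u \neq 0$, and substituting $v = uw$ into the $Y$-identity also gives $Y = uw + (uw)^{-1}$. Expanding
\[ XYZ = (u + u^{-1})\,(uw + (uw)^{-1})\,(w + w^{-1}) \]
produces three $t^2 + t^{-2}$ terms with $t \in \{u, w, uw\}$, plus the constant $2$; rewriting each via $t^2 + t^{-2} = (t + t^{-1})^2 - 2$ yields $XYZ = X^2 + Y^2 + Z^2 - 4$, which is exactly $H = 0$.

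For the converse, given $(X_0, Y_0, Z_0) \in \mathbb{C}^3$ with $H = 0$, I pick any root $u_0$ of $t^2 - X_0 t + 1$ and any root $w_0$ of $t^2 - Z_0 t + 1$, and set $v_0 := u_0 w_0$. Then the two quantities
\[ \widetilde Y := u_0 w_0 + (u_0 w_0)^{-1}, \qquad \widetilde Y' := u_0 w_0^{-1} + u_0^{-1} w_0 \]
have sum $(u_0 + u_0^{-1})(w_0 + w_0^{-1}) = X_0 Z_0$ and product $X_0^2 + Z_0^2 - 4$ (a quick expansion), so they are the roots of $q(t) := t^2 - X_0 Z_0\, t + X_0^2 + Z_0^2 - 4$. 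The hypothesis $H(X_0, Y_0, Z_0) = 0$ is precisely $q(Y_0) = 0$, so $Y_0 \in \{\widetilde Y, \widetilde Y'\}$; if the first, we are done, and if the second, swapping $u_0 \leftrightarrow u_0^{-1}$ exchanges $\widetilde Y$ with $\widetilde Y'$ and produces the required $(u_0, v_0, w_0)$.

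For the modulus and counting part, assume $X_0, Y_0, Z_0 \in \mathbb{R}$. The roots of $t^2 - X_0 t + 1$ have product $1$, so both lie on the unit circle iff they form a complex conjugate pair, iff $X_0^2 \leq 4$; applying this to each of $u_0, v_0, w_0$ gives the equivalence $|u_0|=|v_0|=|w_0|=1 \Leftrightarrow X_0, Y_0, Z_0 \in [-2, 2]$. Writing $X_0 = 2\cos\alpha$ and $Z_0 = 2\cos\gamma$ with $\alpha, \gamma \in [0, \pi]$, the admissible data are $u_0 = e^{\epsilon_1 i\alpha}$ and $w_0 = e^{\epsilon_2 i\gamma}$ with $\epsilon_j \in \{\pm 1\}$, and $v_0 = u_0 w_0$ then satisfies $v_0 + v_0^{-1} = 2\cos(\epsilon_1\alpha + \epsilon_2\gamma) \in \{2\cos(\alpha\pm\gamma)\}$; by the converse analysis these two values are exactly the roots of $q$, only one of which equals $Y_0$. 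Hence admissible $(\epsilon_1, \epsilon_2)$ come in a single orbit under joint sign-flip, yielding the conjugate pair $(u_0, v_0, w_0)$ and $(\bar u_0, \bar v_0, \bar w_0)$. These coincide iff all three entries are real, i.e., $\alpha, \gamma \in \{0, \pi\}$ (hence also the associated $\beta$), equivalently $X_0^2 = Y_0^2 = Z_0^2 = 4$; in this regime $H = 0$ reduces to $X_0 Y_0 Z_0 = 8$.

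The main delicate step I anticipate is the boundary bookkeeping in the last paragraph: when $X_0$ or $Z_0$ equals $\pm 2$, the quadratic $t^2 - X_0 t + 1$ (or its counterpart for $w_0$) has a double root and the ``swap'' of the converse becomes a trivial operation, so one must check carefully that the two-element conjugate pair collapses to a single solution exactly under the stated condition $X_0^2 = Y_0^2 = Z_0^2 = 4$ and $X_0 Y_0 Z_0 = 8$. The $\cos$-parameterization used above treats all boundary and interior cases uniformly, reducing the verification to the elementary identity $\cos(\alpha+\gamma) + \cos(\alpha-\gamma) = 2\cos\alpha\cos\gamma$.
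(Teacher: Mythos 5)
Your proof is correct, and it follows a genuinely different route from the paper's. The paper substitutes $X=u+1/u$, $Y=v+1/v$, $Z=w+1/w$ and proves the single factorization identity $H=(uvw-1)(u-vw)(v-uw)(w-uv)/(u^2v^2w^2)$, which delivers both halves of the first statement simultaneously: the forward implication because the factor $v-uw$ vanishes, and the converse because any choice of roots $u,v,w$ of the three quadratics kills one of the four factors, after which replacing a root by its reciprocal (still a root, since each quadratic has root product $1$) reduces every case to $v=uw$. Your argument replaces this with two separate computations: a direct expansion of $XYZ$ on the locus $v=uw$ for the forward direction, and, for the converse, the observation that $H$ viewed as a quadratic in $Y$ is exactly the resolvent $q(Y)=Y^2-XZ\,Y+X^2+Z^2-4$, whose two roots are realized by $u_0w_0+(u_0w_0)^{-1}$ and $u_0w_0^{-1}+u_0^{-1}w_0$ --- a structural observation that pinpoints explicitly which reciprocal swap is needed. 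For the counting statement the paper uses a pigeonhole argument on the root pairs ($u_1\in\{u_0,\bar u_0\}$, $w_1\in\{w_0,\bar w_0\}$, after which $h_1$ forces $v_1=u_1w_1$), whereas your parameterization $u_0=e^{\pm\sqrt{-1}\alpha}$, $w_0=e^{\pm\sqrt{-1}\gamma}$, $Y_0=2\cos(\pm\alpha\pm\gamma)$ tracks the conjugation orbit explicitly. What each approach buys: the paper's factorization is more compact and symmetric, while yours is more constructive, and your trigonometric bookkeeping makes the degenerate boundary cases ($X_0$ or $Z_0=\pm 2$, where a quadratic has a double root and the paper's ``without loss of generality'' step silently collapses the mixed cases) fully transparent --- which is precisely where you correctly anticipated the delicacy to lie.
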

\begin{proof}
Assume $\{v, u, w\}$ solves the last three equations in  \eqref{DEF}, respectively. It follows %%from the Vieta theorem 
that $\{1/v, 1/u, 1/w\}$ also solves them, respectively, with 
$X=u+1/u,\,Y=v+1/v, \,Z=w+1/w.$
By a straightforward calculation, we have 
{\small$$H={(uvw-1)(u-vw)(v-uw)(w-uv)}/(u^2 v^2 w^2),$$}
from which the first statement follows by the first equation of \eqref{DEF}.

%%Using the Vieta theorem again and 
To prove the second statement, the realness of  $X_0,Y_0,Z_0$ dictates that $|v_0|=|u_0|=|w_0|=1$ if and only if the last three equations in  \eqref{DEF} all have a pair of conjugate solutions, which implies that their discriminants
$X_0^2-4,\;Y_0^2-4, \; Z_0^2-4$
are no more than 0. 
%It remains to show the second statement. On the one hand, for instance, assume that $u_0=e^{\sqrt{-1}\theta}$. Then it follows from the second equation $f_2$ in \eqref{DEF} that $X_0=2\cos\theta\in [-2,2]$. The same goes with $f_3$ and $f_4$. %%the other two equations. %%The other hand holds directly.

Furthermore, given $(X_0,Y_0,Z_0)\in [-2,2]^3$ that solves \eqref{DH}, assume $\{(v_i,u_i,w_i)|i=0,1\}$ are two pairs of solutions of the system \eqref{DEF}.  It follows that 
%{\small
 \[v_1=v_0~\text{or}~\overline{v_0},~\quad \quad \quad u_1=u_0~\text{or}~\overline{u_0},\quad \quad \quad~w_1=w_0~\text{or}~\overline{w_0}.\]
 %}
By the pigeonhole principle, we may assume  $u_1=\overline{u_0},~w_1=\overline{w_0}$  without loss of generality. Then it follows from the first equation $h_1$ in $\eqref{DEF}$ that $v_1=u_1w_1=\overline{v_0}$. Therefore, we deduce that these two solutions either coincide or differ by a complex conjugation, where the former case occurs when $u_0,v_0,w_0$ are all real to satisfy $X_0=Y_0=Z_0=\pm 2$ with $X_0Y_0Z_0=8$ to respect $H=0$.
\end{proof}

\begin{remark} The cubic surface $H=0$ with $|x|,|y|,|z|\leq 2$ is a semialgebraic sphere. %%One can see this by regarding $z$ as a parameter so that, for each fixed $|z|<2$, $H=0$ represents an ellipse $x^2 -zxy+ y^2 =4$ , which degenerates to two lines at $z=\pm 2$.

\begin{figure}[htbp]
\centering
\includegraphics[width=0.20\textwidth]{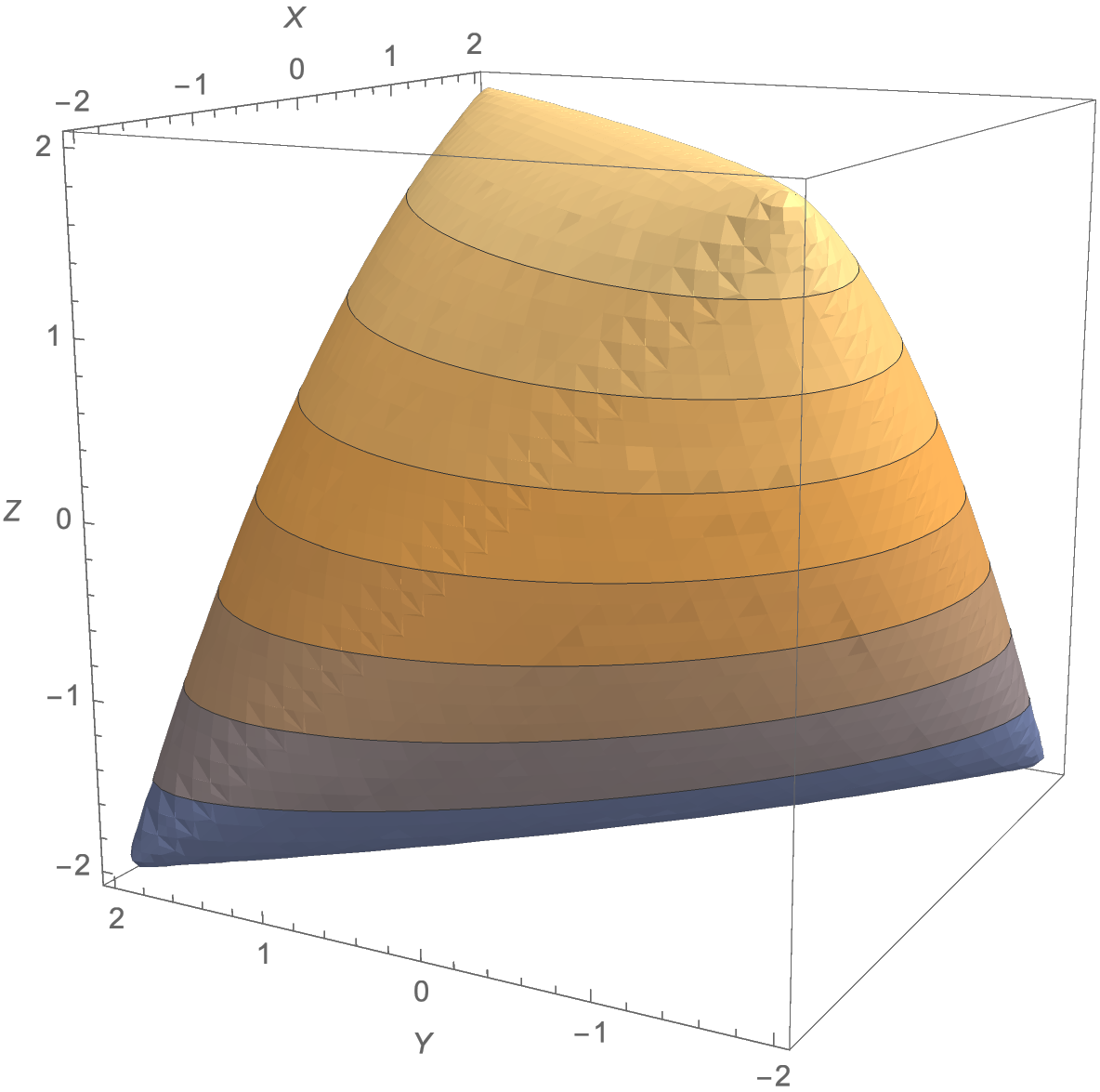}
\caption{Semialgebraic sphere $H=0$}
\end{figure}

\end{remark}

We now analyse the diagonal family in terms of $(t_0, t_1, t_6)\in(\mathbb{R}^{+})^{3}$. By substituting \eqref{t234} and \eqref{XYZt0t1t6} into the formula of $H$ in \eqref{DH} and ignoring the nonzero denominator of the fraction and the nonzero factors, we obtain a hypersurface in $(\mathbb{R}^{+})^{3}$ defined by $F(t_0,t_1,t_6)=0$, where
\begin{tiny}
\allowdisplaybreaks
\begin{align}
%\begin{split}
&F(t_0,t_1,t_6)\triangleq \, {168750000~H\, t_0^6t_1^{11}t_6^4}/(t_{{2}}t_{{3}}t_4^2)\nonumber\\
&=9\,{t_{{1}}}^{6}{t_{{6}}}^{3}{t_{{0}}}^{9}+ \Big( 6912\,{t_{{1}}}^{9}
{t_{{6}}}^{2}-366\,{t_{{1}}}^{6}{t_{{6}}}^{3}-10260\,{t_{{1}}}^{4}{t_{
{6}}}^{4} \Big) {t_{{0}}}^{8}\nonumber\\
&+\Big( 435888\,{t_{{1}}}^{2}{t_{{6}}}
^{5}+299592\,{t_{{1}}}^{4}{t_{{6}}}^{4}+ ( -397332\,{t_{{1}}}^{7}
+2560\,{t_{{1}}}^{6} ) {t_{{6}}}^{3}-58329\,{t_{{1}}}^{9}{t_{{6}
}}^{2}+63504\,{t_{{1}}}^{12}t_{{6}} \Big) {t_{{0}}}^{7}\nonumber\\
&+\Big(
65088\,{t_{{6}}}^{6}+225504\,{t_{{1}}}^{2}{t_{{6}}}^{5}+ ( 31968
\,{t_{{1}}}^{5}+533856\,{t_{{1}}}^{4} ) {t_{{6}}}^{4}+ ( -
451260\,{t_{{1}}}^{7}-128\,{t_{{1}}}^{6} ) {t_{{6}}}^{3}+\nonumber\\
&\;\;\;\;\;( -1296\,{t_{{1}}}^{10}-44868\,{t_{{1}}}^{9} ) {t_{{6}}}^{
2}+16416\,{t_{{1}}}^{12}t_{{6}} \Big) {t_{{0}}}^{6}\nonumber\\
&+ \Big( 78720\,{
t_{{6}}}^{6}+ ( -1366848\,{t_{{1}}}^{3}+154368\,{t_{{1}}}^{2}
 ) {t_{{6}}}^{5}+ ( -2480688\,{t_{{1}}}^{5}+203712\,{t_{{1}
}}^{4} ) {t_{{6}}}^{4}+ (2125440\,{t_{{1}}}^{8}+\nonumber\\
&\;\;\;\;\;541536\,{t
_{{1}}}^{7} ) {t_{{6}}}^{3}+ ( -501336\,{t_{{1}}}^{10}+2560
\,{t_{{1}}}^{9} ) {t_{{6}}}^{2}+ ( -190512\,{t_{{1}}}^{13}-
58329\,{t_{{1}}}^{12} ) t_{{6}}+63504\,{t_{{1}}}^{15} \Big) {t
_{{0}}}^{5} \label{eq-alge}\\
&+ \Big( 22016\,{t_{{6}}}^{6}+ ( 15552\,{t_{{1}}}^{3}+
99840\,{t_{{1}}}^{2}) {t_{{6}}}^{5}+ ( 145152\,{t_{{1}}}^{
6}-2192448\,{t_{{1}}}^{5}) {t_{{6}}}^{4}+ ( 1076544\,{t_{{
1}}}^{8}+\nonumber\\
&\;\;\;\;\;533856\,{t_{{1}}}^{7} ) {t_{{6}}}^{3}+ ( 31104\,{t
_{{1}}}^{11}-451260\,{t_{{1}}}^{10} ) {t_{{6}}}^{2}+ ( -
1296\,{t_{{1}}}^{13}-366\,{t_{{1}}}^{12} ) t_{{6}}+6912\,{t_{{1}
}}^{15} \Big) {t_{{0}}}^{4}\nonumber\\
&+ \Big( -1024\,{t_{{6}}}^{6}-645120\,{t_
{{1}}}^{3}{t_{{6}}}^{5}+ ( 5774976\,{t_{{1}}}^{6}+154368\,{t_{{1}
}}^{5} ) {t_{{6}}}^{4}+ ( -3048192\,{t_{{1}}}^{9}-2480688\,
{t_{{1}}}^{8} ) {t_{{6}}}^{3}+\nonumber\\
&\;\;\;\;\;( 2125440\,{t_{{1}}}^{11}+
299592\,{t_{{1}}}^{10} ) {t_{{6}}}^{2}-397332\,{t_{{1}}}^{13}t_{
{6}}+9\,{t_{{1}}}^{15} \Big) {t_{{0}}}^{3}\nonumber\\
&+ \left( 22016\,{t_{{1}}}^
{3}{t_{{6}}}^{5}+15552\,{t_{{1}}}^{6}{t_{{6}}}^{4}+ ( 145152\,{t_
{{1}}}^{9}+225504\,{t_{{1}}}^{8} ) {t_{{6}}}^{3}+31968\,{t_{{1}}
}^{11}{t_{{6}}}^{2}-10260\,{t_{{1}}}^{13}t_{{6}} \right) {t_{{0}}}^{2}\nonumber\\
&+ \left( 435888\,{t_{{1}}}^{11}{t_{{6}}}^{2}-1366848\,{t_{{1}}}^{9}{t_
{{6}}}^{3}+78720\,{t_{{1}}}^{6}{t_{{6}}}^{4} \right) t_{{0}}+65088\,{t
_{{1}}}^{9}{t_{{6}}}^{3}=0,\nonumber
%\end{split}
\end{align}
\end{tiny}
 with the three necessary discriminant constraints
{\small \begin{equation}\label{inequality}
\aligned
&(9 t_2^2+16 t_1 t_3-t_0 t_4)^2-576 t_1t_2^2t_3\leq 0,~~~
(4 t_2 t_3+9 t_1 t_4-t_0 t_5)^2-\\&144t_1t_2t_3t_4\leq 0,~~~
(64 t_3^2+81 t_2 t_4-t_0 t_6)^2-20736 t_2t_3^2t_4\leq 0,
\endaligned
\end{equation}}
 thanks to the assumptions made on $X, Y, Z\in [-2,2]$ in Lemma \ref{solve by XYZ}: 
 
%%Moreover, the leads to 
\begin{remark}\label{three inequality not indep}
%%Before proceeding further, let us 
The three constraints $|u|=|v|=|w|=1$ are not independent by the first equation in \eqref{DEF}. Any two of the three inequalities in \eqref{inequality} imply the third. Moreover, $Z\in(-2,2)$ implies $X,Y\in(-2,2)$ since for a fixed $Z\in (-2,2)$, $H=0$ in \eqref{DH}
defines an ellipse good for the conclusion. 
\end{remark}

In conclusion, we obtain the following existence and uniqueness theorem.
\begin{theorem}\label{prop-exist}
Given a diagonal matrix $A=\diag\{1, a_{11}, a_{22}, a_{22}, a_{44}\}$, normalized as in Remark {\rm\ref{rrk}}, there exists a sextic curve $\gamma$ belonging to the generally ramified family in $\mathcal{H}^3_0$ such that $A(\gamma)$ is of constant curvature,     %constantly curved holomorphic $2$-spheres of degree $6$ in $A({\mathcal H}_0^3) $ %%{\rm (}see also \eqref{GenericG}{\rm )}, 
if and only if $\{t_0, t_1, t_6\}$ given by \eqref{t234} satisfies the algebraic equation \eqref{eq-alge} and inequalities \eqref{inequality}. 

Moreover,  in $A({\mathcal H}_0^3) $, there exist at most two constantly curved holomorphic $2$-spheres of degree $6$ belonging to the generally ramified family; they are distinct except when $\{X, Y, Z\}$ defined in \eqref{XYZt0t1t6}  satisfies $X^2=Y^2=Z^2=4$ and $XYZ=8$. 
\end{theorem}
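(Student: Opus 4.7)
The plan is to encode the existence of a constantly curved holomorphic $2$-sphere of degree $6$ inside $A(\mathcal{H}_0^3)$ as the solvability of the algebraic-trigonometric system obtained by splitting \eqref{perturbed} into moduli and phases, and then to route the phase problem through Lemma \ref{solve by XYZ}.

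First I would appeal to Theorem \ref{classification theorem} to write every candidate $\varphi$ in the diagonal form \eqref{Ccurve}, determined by $A=\diag\{a_{00},\ldots,a_{44}\}$ together with $\omega_i=\sqrt{t_i}\,e^{\sqrt{-1}\theta_i}$. The normalization of Remark \ref{rrk} ($a_{00}=1$, $a_{22}=a_{33}$, $\theta_0=\theta_6=0$) eliminates the residual gauge coming from $U(5)$-diagonal transformations and from M\"obius rescaling/rotation of the source. Under this normalization the length constraints \eqref{krull} decouple from the phases and solve explicitly for $(t_2,t_3,t_4,t_5)$ in terms of $(t_0,t_1,t_6)$ via \eqref{t234}, reducing the entire existence question to the three phase equations \eqref{angle}.

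Next, introducing the unimodular auxiliary variables \eqref{defining0} converts \eqref{angle} into the compact system \eqref{DEF} with coefficients $(X,Y,Z)$ given by \eqref{XYZt0t1t6}: more precisely, the squared-modulus of each of the three equations in \eqref{angle} forces $X,Y,Z$ to be as stated, and the remaining phase content packages into $h_2=h_3=h_4=0$, with $h_1=0$ built in from the identity $v=uw$. By Lemma \ref{solve by XYZ}, this system admits a unit-modulus solution iff $H(X,Y,Z)=0$ and $X,Y,Z\in[-2,2]$. Substituting \eqref{XYZt0t1t6} and \eqref{t234} into $H=0$ and clearing nonzero denominators yields the hypersurface \eqref{eq-alge}, while the constraints $X^2,Y^2,Z^2\leq 4$ translate, after clearing squares, into the three discriminant inequalities \eqref{inequality}. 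Conversely, any unit-modulus solution $(u,v,w)$ lifts, via Remark \ref{3imply5}, to a compatible choice of angles $(\theta_1,\ldots,\theta_5)$ and hence to an honest constantly curved $2$-sphere in $A(\mathcal{H}_0^3)$, proving the equivalence asserted in the first part of the theorem.

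For the uniqueness claim, Lemma \ref{solve by XYZ} already supplies at most two solutions $(u,v,w)$, namely a conjugate pair, coinciding precisely under the degenerate condition $X^2=Y^2=Z^2=4$ and $XYZ=8$. The step I expect to be the main obstacle is checking that the two conjugate branches really lift to two genuinely inequivalent $2$-spheres (modulo $U(5)$) in the nondegenerate case: the quantities in \eqref{defining0} are only certain linear combinations of $(\theta_1,\ldots,\theta_5)$, so the lift from $(u,v,w)$ back to the $\theta_i$'s carries leftover freedom, and one must argue that this leftover freedom is exactly the residual diagonal-unitary and M\"obius-scaling symmetry already consumed by the normalization of Remark \ref{rrk}. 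Granting this identification, the two conjugate $(u,v,w)$-branches correspond bijectively to the constantly curved $2$-spheres in $A(\mathcal{H}_0^3)$ and coincide precisely under the stated equalities, yielding the second assertion.
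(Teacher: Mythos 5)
Your overall route is the same as the paper's: necessity is read off from the derivation of \eqref{DEF}, \eqref{XYZt0t1t6} and \eqref{inequality}, and both sufficiency and the two-sphere count are made to ride on Lemma \ref{solve by XYZ}. The genuine gap sits exactly at the step you flag and then ``grant'': the passage from a unimodular solution $(u_0,v_0,w_0)$ of \eqref{DEF} back to angles $(\theta_1,\ldots,\theta_5)$ solving \eqref{angle}. Remark \ref{3imply5} does not provide this lift; it only converts data $(t_i,\theta_j)$ that \emph{already} satisfy \eqref{t234} and \eqref{angle} into a sphere via \eqref{Ccurve}. Since \eqref{defining0} prescribes six unimodular quantities in terms of only five angles, the system you must solve is overdetermined, so both existence and uniqueness of the lift require an argument. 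Moreover, your proposed reconciliation -- that the leftover freedom is ``exactly the residual diagonal-unitary and M\"obius-scaling symmetry already consumed by the normalization of Remark \ref{rrk}'' -- cannot be right as stated: symmetry that has been consumed is no longer available to absorb anything, and the actual residual freedom turns out to be finite, not a continuous gauge.

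The paper closes the gap in two stages. First, the phases are not free once $(u_0,v_0,w_0)$ is fixed: plugging $x_1=u_0y_1$ into the first equation of \eqref{angle} pins $y_1=\sqrt{t_0t_4}/(4\sqrt{t_1t_3}\,u_0-3t_2)$ and $x_1=u_0y_1$ (equation \eqref{xdeterminedbyu}), with $|u_0|=1$ forcing $|x_1|=|y_1|=1$, and likewise for $(x_2,y_2)$, $(x_3,y_3)$; this kills the continuous ambiguity you were worried about. Second, taking arguments in \eqref{defining0} gives a linear system for $(\theta_1,\ldots,\theta_5)$ with a $6\times 5$ coefficient matrix of rank $5$; the unique linear relation among its six rows translates into the compatibility condition $x_1y_2x_3=y_1x_2y_3$, i.e.\ $v=uw$, which is precisely $h_1=0$ in \eqref{DEF} -- this is why the paper verifies that the coefficient matrix and its augmented matrix have the same rank $5$, guaranteeing solvability. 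The remaining ambiguity is the discrete one, $\theta_j\mapsto\theta_j+2\pi kj/6$, which is realized by the rotation $z\mapsto e^{2\pi\sqrt{-1}k/6}z$ and hence yields the same curve. This makes the map from solutions of \eqref{DEF} to spheres in $A({\mathcal H}_0^3)$ well defined, so there are at most two spheres, and by Lemma \ref{solve by XYZ} they coincide exactly when $(u_0,v_0,w_0)$ equals its conjugate, i.e.\ when $X^2=Y^2=Z^2=4$ and $XYZ=8$. Without the two-stage argument your proof of the ``if'' direction and of the count is incomplete.
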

\begin{proof}
The necessary part has been verified in the preceding discussion. 

Conversely, assume that $\{t_0, t_1, t_6\}$ satisfy the algebraic equation \eqref{eq-alge} and inequalities~\eqref{inequality}. Then we obtain at least a triple $(v_0,u_0,w_0)$ of solution of system \eqref{DEF} according to Lemma \ref{solve by XYZ}. By substituting it into system \eqref{angle}, we obtain a unique solution $\{(x_i,y_i)| 1\leq i\leq 3\}$ by the following recipe: The first equation of \eqref{angle} gives that 
\begin{equation}\label{xdeterminedbyu}
y_1={\sqrt{t_0t_4}}/({4\sqrt{t_1t_3}\,u_0-3t_2}),~~\quad~x_1=y_1 {u_0}.
\end{equation}
It follows from $|u_0|=1$ that both $x_1$ and $y_1$ are of unit length. A similar discussion applies to $(x_2, y_2)$ and $(x_3, y_3)$. %The formulae for the other two pairs are similar. It is straightforward to verify that the norms of them are unital thanks to the system \eqref{DEF}.

Apply the logarithmic function on both sides of \eqref{defining0}. Since the ranks of the coefficient matrix of of $(\theta_1,\ldots,\theta_5)$ and its enlarged version with the augmented $(\log(x_1),\cdots,\log(y_3))$ 
 are both equal to $5$, we can solve $\theta_j$ from the arguments of the points $\{(x_i,y_i)| 1\leq i \leq 3\}$ on the plane. Substituting all the data into \eqref{Ccurve} gives a constantly curved holomorphic $2$-sphere $\varphi $ in $A({\mathcal H}_0^3)$ (see Remark \ref{3imply5}).

Lastly, we remark that $\varphi $ is uniquely determined by $(v_0,u_0,w_0)$, owing to that the only difference between any two pairs of solutions $\{\theta_j|1\leq j\leq 5\}$ and $\{\tilde{\theta_j}|1\leq j\leq 5\}$ of \eqref{defining0} is 
%{\small
 $\theta_j=\tilde{\theta_j}+{2kj\pi}/{6},~~~1\leq j\leq 5,$ %}
for some $0\leq k\leq 5$. 
It is straightforward to show that the corresponding two curves share the same image by introducing a rotational reparametrization $\tilde{z}=z e^{{\sqrt{-1}2k\pi}/{6}}$. %%The details are left to the reader. 

In conclusion, any solution $(v,u,w)$ of system \eqref{DEF} determines uniquely a constantly curved $2$-sphere. Then the second statement follows from Lemma \ref{solve by XYZ}.
\end{proof}

%We now look into the uniqueness of the problem.
\begin{corollary}\label{uniqueness}
The only constantly curved holomorphic $2$-sphere of degree $6$ in the standard Fano $3$-fold $\mathcal{H}_0^3$ tangent to the %%=V_6\cap G(2,5)$ 
the standard Veronese curve $PSL_2\cdot u^6$ is the Veronese curve itself.  
\end{corollary}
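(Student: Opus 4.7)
The plan is to specialize Theorem \ref{prop-exist} to the case $A = I_5$, which corresponds to the standard Fano $3$-fold $\mathcal{H}_0^3 = V_6\cap G(2,5)$. Under the normalization of Remark \ref{rrk}, all diagonal entries $a_{ii}$ equal $1$, so formula \eqref{eq-t01234} gives $t_i = 1$ for every $i$. I would then substitute these values into \eqref{XYZt0t1t6} to obtain
\begin{equation*}
X = \frac{9 + 16 - 1}{12} = 2,\quad Y = \frac{4 + 9 - 1}{6} = 2,\quad Z = \frac{64 + 81 - 1}{72} = 2,
\end{equation*}
so that $X^2 = Y^2 = Z^2 = 4$ and $XYZ = 8$. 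In particular the inequalities \eqref{inequality} are satisfied (with equality), and $(t_0,t_1,t_6)=(1,1,1)$ must solve the algebraic equation \eqref{eq-alge}, a fact that is anyway guaranteed \emph{a posteriori} by the existence of the standard Veronese curve in $\mathcal{H}_0^3$.

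Next I would invoke the final clause of Theorem \ref{prop-exist}: under precisely this exceptional configuration $X^2=Y^2=Z^2=4$, $XYZ=8$, any two constantly curved holomorphic $2$-spheres of degree $6$ in $\mathcal{H}_0^3$ must coincide (the conjugate pair from Lemma \ref{solve by XYZ} collapses to the single real triple $u_0=v_0=w_0=1$). Hence $\mathcal{H}_0^3$ contains at most one such $2$-sphere. Since the standard Veronese curve $PSL_2\cdot u^6$, whose explicit parameterization appears in \eqref{companion3}, is a known constantly curved holomorphic $2$-sphere of degree $6$ lying in $\mathcal{H}_0^3$, it must be the unique one.

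The only point requiring mild care is matching the Veronese curve to the data $(t_i,\theta_j)=(1,0)$ in the diagonal family, which can be done by inspecting the parameterization \eqref{Ccurve} when $A = I_5$ and all $\omega_i = 1$ and comparing it with $Z_6(z)$ in \eqref{Veronse sphere}. This step should be routine rather than a genuine obstacle, as the entire machinery of Section \ref{sec5} has been built precisely to reduce such rigidity statements to the elementary arithmetic displayed above.
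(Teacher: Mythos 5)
Your proposal is correct and follows essentially the same route as the paper's own (much terser) proof: for $\mathcal{H}_0^3$ one has $A=I_5$, hence $t_i=1$ for all $i$ by \eqref{eq-t01234}, so $X=Y=Z=2$ by \eqref{XYZt0t1t6}, and the exceptional clause of Theorem \ref{prop-exist} (equivalently, the collapse of the conjugate pair in Lemma \ref{solve by XYZ}) forces uniqueness, with the standard Veronese curve realizing the unique sphere. The extra details you supply (verifying \eqref{inequality} holds with equality, and matching the Veronese curve to the data $(t_i,\theta_j)=(1,0)$ via \eqref{Ccurve}) are left implicit in the paper but are consistent with it.
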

\begin{proof}
 %%(see also \eqref{xdeterminedbyu}).
For the standard Fano $3$-fold $\mathcal{H}_0^3$, the associated $\{t_0, t_1,t_6\}$ are all equal to $1$. Therefore the corresponding $X=Y=Z=2$ by \eqref{XYZt0t1t6}.
\end{proof}
\begin{remark}\label{RMK}
 In addition to the standard Fano $3$-fold $\mathcal{H}_0^3$, let us take the diagonal $A= \diag \{1,1,4,4,16\}$, there exists a unique constantly curved holomorphic $2$-sphere of degree $6$ belonging to the generally ramified family that lies in $A(\mathcal{H}_0^3)$ given by 
%{\small 
\[\begin{pmatrix}
1 & 0 & -\sqrt{6}z^2 &-2z^3 & -3z^4\\
0 & 1 & \sqrt{6}z & 3z^2 & 4z^3\\
\end{pmatrix},\]
%}
since the associated $X=Y=Z=2$. %; see also Example {\em \ref{ex-t01}} in Section {\em \ref{lastsection}} for details.
It turns out that among Fano $3$-folds ${\mathcal H}^3$ in $G(2,5)$, only three {\em (}up to unitary congruence{\em )} contain a unique constantly curved holomorphic $2$-sphere of degree $6${\emph ;} the last one will be given in Example~{\em \ref{eg-cusp}}. %owing to solving three algebraic equations $(X,Y,Z)=(\pm 2,\pm 2,\pm 2)$ {\emph (}with even minus symbols{\emph )} with respect to three positive parameters $\{t_0,t_1,t_6\}$.
\end{remark}

\section{The moduli space and new examples}\label{lastsection}

Before describing the moduli space of constantly curved holomorphic $2$-spheres belonging to the generally ramified family, we first consider the 
semialgebraic set $S\subseteq (\mathbb{R}^{+})^3$ determined by the algebraic equation \eqref{eq-alge} and the three inequalities \eqref{inequality}. %%which is the brown jagged portion of the upper right section in the following picture, in the $t_0t_1t_6$ coordinates.

%% \begin{figure}[htbp]
%%\centering
%%\includegraphics[width=0.5\textwidth]{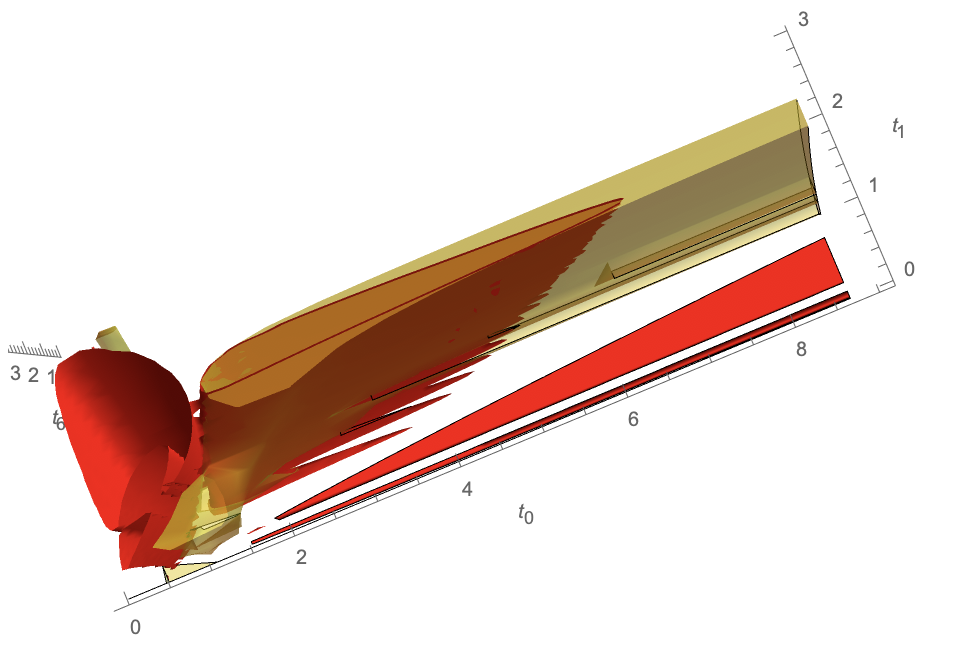}
%%\end{figure}

\begin{proposition}\label{PROP7.1}
The semialgebraic set $S$ is $2$-dimensional and equipped with an involution 
%$\sigma: S  \rightarrow S$,
%%\begin{align}
\begin{equation}\label{sigmag}
%%\begin{split}
%%,\\
\sigma: S  \rightarrow S, \quad t=(t_0,t_1,t_6)  \mapsto T=(T_0,T_1,T_6)=(g\, t_0,g\, t_1,g^3\, t_6), 
%%\end{split}
%%\end{align}
\end{equation}
where $g(t_0,t_1,t_6)\triangleq t_1^3/(t_0^2t_6)$.
\end{proposition}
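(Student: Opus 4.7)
The plan is to prove the two assertions of the proposition separately.

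For the dimension claim, the polynomial $F$ in \eqref{eq-alge} is a single nontrivial real-polynomial equation in three variables, so $\dim S \le 2$ is automatic and the three inequalities in \eqref{inequality} are open conditions. To obtain $\dim S \ge 2$ I would exhibit a smooth $2$-dimensional patch of $S$ via the implicit function theorem. Starting from the one-parameter family \eqref{family33} of Remark \ref{rk4}, direct substitution of $|\beta_3|^2 = 16 t_3/(t_0 t_1)$, $|z_3|^2 = 4 t_3 t_1/t_6$, together with $t_3 = 5 t_0 t_1 t_6/(t_0 t_1^2 + 4 t_6)$ and the normalization $a_{11}=1$ that the other coefficients impose, identifies this family with the real curve $\{(1, t_1, t_1^3) : t_1 \in (1/16, 1)\}\subset S$. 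At the interior point $t_1 = 1/2$, a short arithmetic check using \eqref{XYZt0t1t6} yields $X, Y, Z$ strictly inside $(-2, 2)$, hence the inequalities in \eqref{inequality} hold strictly there; since $F$ factors through the polynomial $H$ of \eqref{DH} up to nonzero factors, and the gradient of $H$ vanishes on $\{H=0\}$ only at $(X, Y, Z) \in \{\pm 2\}^3$ with $XYZ = 8$, one has $\nabla F \neq 0$ at this point. The implicit function theorem now produces a two-dimensional neighborhood in $S$, so $\dim S = 2$.

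The involution identity $\sigma^2 = \mathrm{id}$ is a one-line check: $g(\sigma(t)) = (g t_1)^3/((g t_0)^2 (g^3 t_6)) = 1/g$, whence $\sigma(\sigma(t)) = ((1/g)(gt_0), (1/g)(gt_1), (1/g)^3(g^3 t_6)) = (t_0, t_1, t_6)$.

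The substantive step is $\sigma(S)\subseteq S$. Rather than verify $F(\sigma(t))\in\langle F\rangle$ and the three inequalities by direct substitution into \eqref{eq-alge}, I would give a geometric interpretation of $\sigma$. The reversal $R\in U(5)$ sending $e_i\mapsto e_{4-i}$ is the image under $\rho^4$ of the order-two M\"obius involution $[u:v]\mapsto [v:u]$, and by Lemma \ref{commutediag} it acts on $V_6$ and preserves the standard Fano 3-fold $\mathcal{H}_0^3$. Consequently, $R$ carries the diagonal Fano 3-fold $A(\mathcal{H}_0^3)$ with $A=\mathrm{diag}(1, a_{11}, a_{22}, a_{22}, a_{44})$ to the unitarily equivalent $(RAR^{-1})(\mathcal{H}_0^3)=\mathrm{diag}(a_{44}, a_{22}, a_{22}, a_{11}, 1)(\mathcal{H}_0^3)$. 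Dividing by $a_{44}$ to restore $a_{00}'=1$ and then applying the M\"obius reparametrization $z\mapsto (a_{22}/a_{11}) z$, which by Remark \ref{rrk} corresponds to the diagonal action $\mathrm{diag}(1, \lambda, \lambda^2, \lambda^3, \lambda^4)$ with $\lambda = a_{22}/a_{11}$, gives the renormalized diagonal $A''' = \mathrm{diag}(1,\, a_{22}^2/(a_{11}a_{44}),\, a_{22}^3/(a_{11}^2 a_{44}),\, a_{22}^3/(a_{11}^2 a_{44}),\, a_{22}^4/(a_{11}^4 a_{44}))$. Inserting $a_{11}^2=1/t_0$, $a_{22}^2=1/t_1$, $a_{44}^2=t_1/t_6$ produces $(t_0',t_1',t_6') = (t_1^3/(t_0 t_6),\ t_1^4/(t_0^2 t_6),\ t_1^9/(t_0^6 t_6^2)) = \sigma(t_0, t_1, t_6)$. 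Since each step in this construction is either a unitary symmetry or an allowed M\"obius reparametrization, it preserves the existence of a constantly curved holomorphic $2$-sphere of degree $6$ in the linear section, whence both $F=0$ and the inequalities \eqref{inequality} are preserved under $\sigma$.

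The main obstacle is recognizing this geometric origin of $\sigma$; once identified, the invariance of $F$ and of the inequalities is automatic, whereas a purely algebraic verification would require a substantial symbolic computation in view of the length of \eqref{eq-alge}.
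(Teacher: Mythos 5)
Your handling of the substantive inclusion $\sigma(S)\subseteq S$ is correct, but it takes a genuinely different route from the paper's proof. The paper works entirely in the $t$-variables: it verifies by direct computation the identities $F(T)=g^{21}F(t)$ and $Z(T)=Z(t)$, then extracts the bound $Y(T)\in[-2,2]$ by noting that the last equation of \eqref{perturbed} produces one further unimodular-root condition $q^2-Qq+1=0$ with $Q(t)=Y(T)$ (this step, like yours, quietly uses the existence of the sphere attached to $t\in S$), and finally gets $|X(T)|\leq 2$ from the dependence of the three inequalities recorded in Remark \ref{three inequality not indep}. You instead realize $\sigma$ geometrically: the reversal $e_i\mapsto e_{4-i}$ is $\rho^4$ of the M\"{o}bius flip $[u:v]\mapsto[v:u]$, hence preserves $\mathcal{H}_0^3$; conjugating the diagonal $A$ by it, rescaling so that $a_{00}=1$, and reparametrizing by $z\mapsto(a_{22}/a_{11})z$ yields precisely the normalized diagonal whose associated triple is $\sigma(t)$, and the invariance of $S$ then follows by applying Theorem \ref{prop-exist} in both directions. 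Your renormalization computation and the identity $(t_0',t_1',t_6')=(g\,t_0,g\,t_1,g^3t_6)$ check out. In effect you have promoted Remark \ref{rk-involution} and Case (2) of the proof of Theorem \ref{moduli space} (where the anti-diagonal unitary appears) into the proof of this proposition. What your route buys is conceptual clarity and the avoidance of any symbolic verification against the large polynomial \eqref{eq-alge}; what the paper's route buys is a self-contained argument at the level of \eqref{XYZt0t1t6} that leans on the existence theorem only for the single extra constraint $Q(t)\in[-2,2]$.

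Your dimension argument, by contrast, has a gap at its key step. You pick the same point as the paper, $p_0=(1,1/2,1/8)$, on the curve $\{(1,t_1,t_1^3)\}$ coming from \eqref{family33}, and you rightly check that $X,Y,Z$ are strictly inside $(-2,2)$ there (a point the paper leaves implicit, and which is needed so that a full two-dimensional neighborhood of $p_0$ in $\{F=0\}$ stays in $S$). But the inference ``$F$ factors through $H$ up to nonzero factors and $\nabla H\neq 0$ away from $(X,Y,Z)\in\{\pm2\}^3$ with $XYZ=8$, hence $\nabla F(p_0)\neq0$'' is not valid as stated. Writing $F=G\cdot(H\circ\Phi)$ with $G>0$ on $(\mathbb{R}^{+})^{3}$ and $\Phi=(X,Y,Z)$, on the zero set one has $\nabla F=G\,(D\Phi)^{T}\nabla H$; your observation controls only the factor $\nabla H$, and one must still rule out $\nabla H\in\ker\,(D\Phi)^{T}$, i.e.\ one needs $D\Phi(p_0)$ to be nonsingular, or at least that $\nabla H$ misses the kernel of its transpose. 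That this is not an empty worry is illustrated by the paper's direct computation $\nabla F(p_0)=(0,-13125/256,4375/64)$: the composite gradient does degenerate in the $t_0$-direction, it just does not vanish identically. The gap closes easily---compute $\nabla F(p_0)$ directly, as the paper does, or verify that $D\Phi(p_0)$ has rank $3$---but as written the step is unjustified.
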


\begin{proof}
It is easy to show that $\sigma$ is an involution of $(\mathbb{R}^{+})^3$ restricted to $S$; consequently, we need only verify that $\sigma(S) \subseteq S$. 

Assume that $t=(t_0,t_1,t_6) \in S$, i.e., that  $t$ satisfies 
%{\small
 \[F(t)=0,~~\text{and}~~ X(t),Y(t),Z(t)\in [-2,2].\]%}
A direct computation yields that 
%{\small 
\[F(T)=g^{21} F(t)=0,~~~Z(T)=Z(t) \in [-2,2].\]%}

Note that the last equation of \eqref{perturbed} gives 
\[\sqrt{t_1t_6}=3\sqrt{t_2t_5}e^{\sqrt{-1}(\theta_2+\theta_5-\theta_1)}-2\sqrt{t_3t_4}e^{\sqrt{-1}(\theta_3+\theta_4-\theta_1)}.\]
Set $q=e^{\sqrt{-1}(\theta_2+\theta_5-\theta_3-\theta_4)}$. Then a similar argument to that deriving \eqref{DEF} leads to  
%{\small 
$
\aligned
&q^2-Qq+1=0, %\quad\text{where},\\
%&q=e^{\sqrt{-1}(\theta_2+\theta_5-\theta_3-\theta_4)},\quad 
\endaligned
$
where $$Q(t)\triangleq (-t_1t_6+9t_2t_5+4t_3t_4)/(6\sqrt{t_2t_3t_4t_5}).$$
%} 
Since $|q|=1$, it forces $Q(t)\in [-2,2]$. It is straightforward to show  that $Y(T)=Q(t)\in [-2,2]$. Therefore, combining Remark \ref{three inequality not indep}, we obtain that the norm of $X(T)$ is also less than or equal to $2$. This completes the proof that $T=\sigma(t)$ lies in $S$.

We are left with showing that the real dimension of %The next thing to do in the proof is to show  that 
the semialgebraic set $S$ is $2$. At the generic point $p_0=(1,\frac{1}{2},\frac{1}{8})\in S$ (for the choice of $p_0$, see Example \ref{example0} %%in Section \ref{lastsection} 
below for details). A calculation gives
{\small\[ \nabla F(p_0)=\big({\partial F}/{\partial t_0},{\partial F}/{\partial t_1},{\partial F}/{\partial t_6}\big)(p_0)=(0,-13125/256,4375/64)\neq 0.\]}
%%(and $X(p_0)\approx 1.947,~Y(p_0)\approx1.893,~Z(p_0)\approx1.990$). 
Owing to the implicit function theorem, near $p_0$, $S$ is locally a graph of $t_0$ and $t_1$; hence, its real dimension is $2$. 
\end{proof}
\begin{remark}\label{rk-involution}
%%It follows form the above proposition that there is a  $\mathbb{Z}_2$-action on $S$ induced by the involution $\sigma$. 
We point out that the involution $\sigma$ comes from the reciprocal transformation of\, $\mathbb{C}P^1$ {\em (}see the proof of the following Theorem{\em )}. %%Therefore, constantly curved $2$-spheres corresponding to a pair of involutive points in $S$ either coincide or differ by a complex conjugation, in perfect accord with Theorem~{\em \ref{prop-exist}}. 
%As a result, we obtain a $\mathbb{Z}_2$-action on $S$ induced by the involution $\sigma$. Combining with all the ingredients above, we are in a position to pose our main theorem. 
\end{remark}
Now, we are in a position to present our main theorem. Denote by $\mathcal{M}$ the moduli space  of %generic constantly curved holomorphic $2$-spheres of degree $6$ 
constantly curved holomorphic $2$-spheres belonging to the generally ramified family in $G(2,5)$, modulo the extrinsic ambient $U(5)$-equivalence %%and conjugation (with the correspondence $(v, u, w) \mapsto(\overline{v},\overline{u},\overline{w})$ in Lemma~\ref{solve by XYZ}), 
%and intrinsically, 
and the internal M\"{o}bius reparametrization.

\begin{theorem}\label{moduli space}
$\mathcal{M}=S/\mathbb{Z}_2$, so that it is a $2$-dimensional semialgebraic set. 
%The moduli space of the generic  family is $S/\mathbb{Z}_2$ and is $2$-dimensional, up to extrinsically, the ambient unitary $U(5)$-equivalence and complex conjugation, and intrinsically the internal M\"{o}bius reparametrization.
\end{theorem}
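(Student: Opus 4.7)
The plan is to exhibit a natural map $\Phi:S\to\mathcal{M}$, show that it is $\sigma$-invariant, and verify that the induced map $\tilde\Phi:S/\mathbb{Z}_2\to\mathcal{M}$ is a bijection; together with Proposition~\ref{PROP7.1}, this will deliver the $2$-dimensional semialgebraic structure.

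To construct $\Phi$, given $t=(t_0,t_1,t_6)\in S$, I would first recover the normalized diagonal matrix $A=\diag\{1,1/\sqrt{t_0},1/\sqrt{t_1},1/\sqrt{t_1},\sqrt{t_1/t_6}\}$ prescribed by Remark~\ref{rrk} and \eqref{eq-t01234}, then follow the recipe in the proof of Theorem~\ref{prop-exist}: solve \eqref{DEF} via Lemma~\ref{solve by XYZ} for a triple $(u_0,v_0,w_0)$ of unit modulus, invert \eqref{defining0} to obtain angles $\theta_1,\ldots,\theta_5$ with $\theta_0=\theta_6=0$, and form the curve $\varphi_t$ of \eqref{Ccurve}. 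Setting $\Phi(t)\triangleq[\varphi_t]\in\mathcal{M}$, surjectivity of $\Phi$ is immediate from Theorem~\ref{classification theorem}: any generic constantly curved holomorphic $2$-sphere of degree $6$ is $U(5)$-equivalent to a diagonal-family curve, which after the normalization of Remark~\ref{rrk} corresponds to some $t\in S$.

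The key step is verifying $\Phi(t)=\Phi(\sigma(t))$. I would take the curve $\varphi_t$ and apply the reciprocal M\"{o}bius reparametrization $z\mapsto 1/z$ together with multiplication by $z^6$, obtaining a new holomorphic curve whose $z^k$-coefficient is the $z^{6-k}$-coefficient of $\varphi_t$. Composing with the basis permutation $P:e_i\mapsto e_{4-i}\in U(5)$, whose $\wedge^2$-action swaps $e_0\wedge e_1\leftrightarrow -e_3\wedge e_4$, $e_0\wedge e_2\leftrightarrow -e_2\wedge e_4$, and similarly for the remaining pairs, restores the form of \eqref{Ccurve} with a new diagonal $\tilde A$; a final rescaling $z\mapsto\lambda z$ enforces the $\tilde a_{22}=\tilde a_{33}$ normalization of Remark~\ref{rrk}. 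Reading off the new diagonal entries from \eqref{eq-t01234} should yield $\tilde t_0=gt_0$, $\tilde t_1=gt_1$, $\tilde t_6=g^3t_6$ with $g=t_1^3/(t_0^2t_6)$, matching $\sigma(t)$ in \eqref{sigmag} and confirming that $\Phi$ factors through $S/\mathbb{Z}_2$.

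For injectivity modulo $\sigma$, suppose $\Phi(t)=\Phi(t')$, so $\varphi_{t'}=U\cdot\varphi_t\circ f$ for some $U\in U(5)$ and some M\"{o}bius $f$. Corollary~\ref{ramified pts coro} constrains the ramification divisor of both curves to be $0+\infty$, forcing $f$ to stabilize $\{0,\infty\}$, i.e., $f(z)=\lambda z$ or $f(z)=\lambda/z$. In the first case, $t_i$ is unchanged and matching the entries of \eqref{Ccurve} pins $U$ down to a diagonal unitary preserving the lower-triangular normalization, hence $t'=t$; in the second, one recovers precisely the involution just analyzed, giving $t'=\sigma(t)$. Combined with the $2$-dimensionality of $S$ from Proposition~\ref{PROP7.1} (and the fact that a finite quotient of a semialgebraic set of dimension $2$ is again a semialgebraic set of dimension $2$), this establishes $\mathcal{M}=S/\mathbb{Z}_2$. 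The main obstacle I anticipate is the coefficient bookkeeping in the $\sigma$-invariance step: tracking the ten anti-symmetric coefficients of \eqref{Ccurve} through the combined action of $z\mapsto 1/z$, the basis swap $P$, and the rescaling $z\mapsto\lambda z$, and confirming that the induced transformation on $(t_0,t_1,t_6)$ is exactly the algebraic $\sigma$ of \eqref{sigmag}; the remaining pieces reduce to direct applications of Theorems~\ref{classification theorem} and~\ref{prop-exist}.
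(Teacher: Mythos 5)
Your proposal follows essentially the same route as the paper's own proof: the paper likewise reduces to the diagonal family via Theorem~\ref{classification theorem}, uses the fact that the ramification points $\{0,\infty\}$ (Lemma~\ref{usefulAlgeLemma}, Corollary~\ref{ramified pts coro}) force the induced M\"obius transformation to be $\mu z$ or $\mu/z$, concludes $\tilde{t}=t$ in the diagonal-unitary case and $\tilde{t}=\sigma(t)$ in the anti-diagonal case, and invokes Proposition~\ref{PROP7.1} for the semialgebraic structure and dimension. The differences are organizational rather than substantive: you run the bijection as $S/\mathbb{Z}_2\to\mathcal{M}$ and check $\sigma$-invariance by the explicit reciprocal-plus-permutation computation, whereas the paper runs the map in the opposite direction (curve $\mapsto$ parameter, with injectivity of the quotient map $\tau$ in \eqref{tauf} replacing your coefficient matching), so that your $\sigma$-invariance step is exactly the paper's Case~(2) read in reverse (cf.\ Remark~\ref{rk-involution}).
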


\begin{proof} 
Our first goal is to show that a holomorphic $2$-sphere of the diagonal family is also determined by its coefficients of $z^k,~k=2,3,4$ in \eqref{Ccurve}. %By the uniqueness result up to complex conjugation in Theorem~\ref{prop-exist},  %Proposition~\ref{conjugate}, 
Consider the quotients of them respectively to define a map
%%\begin{align}
%{\small
{\small\begin{equation}\label{tauf}
%%\begin{split}
\tau: S\rightarrow (\mathbb{R}^{+})^3,\quad
(t_0,t_1,t_6)\mapsto (A, B, C)\triangleq (\frac{a_{00}a_{33}}{a_{11}a_{22}},\frac{a_{00}a_{44}}{a_{11}a_{33}},\frac{a_{11}a_{44}}{a_{22}a_{33}}).
%%\end{split}
%%\end{align}
\end{equation}}
%}
%where $A_k$ is exactly the quotient of the norm of coefficient before $z_k$ for $k=2, 3, 4$. 

It follows from \eqref{eq-t01234} that $(A,B,C)=(\sqrt{t_0},\sqrt{\frac{t_0}{t_6}}\,t_1,\sqrt{\frac{t_1}{t_0t_6}}\,t_1)$. %by \eqref{t234}, and actually they are the quotients between norms of coefficients of $z^k,~k=2,3,4$ respectively, in \eqref{Ccurve}. 
It is straightforward to show that $t_0=A^2,~t_1={A^4C^2}/{B^2},~t_6={A^{10}C^4}/{B^6}$; therefore $\tau$ is injective.

The next step is to describe our moduli space. Let $\varphi _1(z)$ and $\varphi _2(\tilde{z})$ be two holomorphic $2$-spheres of the diagonal family corresponding to $t=(t_0,t_1,t_6)$ and $\tilde{t}=(\tilde{t_0},\tilde{t_1},\tilde{t_6})$, respectively. If there exists a $U\in U(5)$ such that the image of $U\cdot  \varphi _1$ agrees with that of $\varphi _2$, then $U$ induces a M\"{o}bius transformation $\tilde{z}=f(z)$ on $\mathbb{C}P^1$. Since the ramified points of $\varphi _1$ and $\varphi _2$ are both $\{0,\infty\}$ by Lemma \ref{usefulAlgeLemma}, this set is invariant under $\varphi$. Hence $\tilde{z}=\mu z$ or $\frac{\mu}{z}$, where $\mu\in \mathbb{C}^{\ast}$. Our aim is to establish that $\tilde{t}=t$ or $\tilde{t}=\sigma(t)$, which suffices to complete the proof. We divide the argument into two cases.

Case (1): Suppose that $\tilde{z}=\mu z$. Comparing the first two and last two terms of $\varphi _1$ and $\varphi _2$, we obtain that (see \eqref{Ccurve})
{\small \begin{align*}
U\cdot e_0\wedge U\cdot e_1 \equiv 0 \mod (e_0,e_1),~~~ U\cdot e_0\wedge U\cdot e_2 \equiv 0 \mod (e_0,e_2),\\
U\cdot e_2\wedge U\cdot e_4 \equiv 0 \mod (e_2,e_4),~~~ U\cdot e_3\wedge U\cdot e_4 \equiv 0 \mod (e_3,e_4).
\end{align*}}
Hence, $U=\diag\{u_{00},\ldots,u_{44}\}$ is diagonal as $U$ is unitary.  As a result, they share the same quotients in \eqref{tauf}, i.e., $\tau(t)=\tau(\tilde{t}),$ so that $t=\tilde{t}$ by the injectivity of $\tau$. 

Case (2): Suppose that $\tilde{z}=\frac{\mu}{z}$. Following a similar argument as in Case (1), we see that $U$ is anti-diagonal. Consequently, the quotients in \eqref{tauf} satisfy 
$A(\tilde{t})=C(t),~~B(\tilde{t})=B(t),~~C(\tilde{t})=A(t).$
By the exposition below \eqref{tauf}, it is easy to show that $\tilde{t}=\sigma(t)$.

%Hence, in each case, $U\cdot \varphi _1$ and $\varphi _2$ are the same up to complex conjugation. 
Now, the conclusion follows from Theorem~\ref{thm-tangential}. %% and Remark~\ref{rk-involution}. 
\end{proof}

%, where the only holomorphic homogeneous $2$-sphere of curvature $\frac{2}{3}$ in $G(2,5)$ is the Veronese curve $PSL_2\cdot u^6$.

%%We conclude this section by pointing out that, to the authors' knowledge, the holomorphic $2$-spheres of the diagonal family are novel and appear first time in the literature.

%{\color{blue}

%The moduli space of the diagonal family is shown in the following picture, which is the brown jagged portion of the upper right section. It appears that this moduli space is connected; it would be an interesting question whether this is true.

%\begin{figure}[htbp]
%\centering
%\includegraphics[width=0.5\textwidth]{moduli.png}
%\end{figure}

The end of this section is devoted to the construction of several interesting individual as well as 1-parameter families of examples. %, which give all possible associated number of distinct eigenvalues of the diagonal matrix $A=\diag\{1,a_{11},a_{22},a_{33},a_{44}\}$ in the diagonal family. Note that under normalization in Remark \ref{rrk}, we assume that $a_{22}=a_{33}$ in the sequel.

%New examples and the corresponding singular values

%%In this section, we illustrate our conjecture that the moduli space of the diagonal family $S/\mathbb{Z}_2$ is topologically a closed disk by numerical evidences. 

%%To study its topology, consider the function $g(t)\triangleq \frac{t_1^3}{t_0^2t_6}$ restricted on the semi-algebraic set $S$, see \eqref{sigmag}. By Mathematica, we conjecture that the range of $g$ is a closed interval $I\subseteq [\frac{1}{50},50]$. For ease of notations, we denote the level set of $g$ by $S_{a}$, where $a\in I$. The involution $\sigma$ actually interchanges the level set $S_{a}$ with $S_{\frac{1}{a}}$ since
%%\begin{equation}\label{specialsigma}
%%g(t)g(\sigma (t))=1.
%%\end{equation}
%%Denote the minimal and maximal of $g$ by $m$ and $M$, respectively. Then $M=\frac{1}{m}$ by  \eqref{specialsigma}. Moreover, the fixed-point set of $\sigma$ is exactly the level set $S_1$.

%%\begin{lemma}

Recall the involution $\sigma:S\rightarrow S$ and its invariant subset $S_1$ defined by setting $g=1$, so that $1=g={t_1^3}/{(t_0^2\,t_6)}$. It is a piecewise smooth simple closed curve. Indeed, %% homeomorphic to the unit circle.
%%\end{lemma}
%%\begin{proof}
substitute $t_6={t_1^3}/{t_0^2}$ into \eqref{eq-alge} and ignore the non-zero denominator and the non-zero factors. The level set $S_1$ is the semialgebraic set defined by the three inequalities in \eqref{inequality} and
{\small\begin{equation*}
\aligned
&  \left( 441\,t_0^{8}-42\,t_0^{7}+t_0^
{6}-72\,t_0^{5}t_{{1}}-5136\,t_0^{4}t_{{1}}-1592\,
t_0^{3}t_{{1}}+7056\,t_0^{2}t_1^{2}-672\,t_{{0}}t_1^{2
}+16\,t_1^{2} \right)\cdot\\
&\left( t_{{0}}-1 \right)  \left( 2\,t_0^{3}-3\,t_1t_0+t
_{{1}} \right)=0.
\endaligned
\end{equation*}}
 In the $t_0t_1$-coordinate plane, $S_1$ is plotted in Figure $3$. %It is constituted by three parts, the blue vertical line segment corresponds $ (t_{{0}}-1 )=0$, to 
The branch corresponding to $ (t_{{0}}-1 )=0$ is the blue vertical line segment. %; see also Example {\em \ref{ex-t01}}. 
The second branch described by $\left( 2\,t_0^{3}-3\,t_1t_0+t_{{1}} \right)=0$ is the end point $(1,1)$ of the blue line segment. The third branch corresponds to the union of the (upper) brown and (lower) green curves parametrized by
\begin{equation}\label{eq-curve}
\psi_1=\{(s,F_1(s))~|~s\in[1,{11}/{6}]\},~~~\psi_2=\{(s,F_2(s))~|~s\in[1,{11}/{6}]\},
\end{equation}
respectively, where {\small $F_1=(t_0^3(199+642t_0+9t_0^2+30\Delta))/(4(21t_0-1)^2)$, $F_2=(t_0^3(199+642t_0+9t_0^2-30\Delta)/(4(21t_0-1)^2)$}, and {\small $\Delta\triangleq \left(3 t_0+2\right) \sqrt{\left(4 t_0+1\right) \left(11-6 t_0\right)}$}. 

It follows from  Theorem \ref{moduli space} that the moduli space is $\mathcal{M}=S/\sigma$ with the simple closed curve $S_1$ on its boundary. By applying the coordinate transformation $(t_0, t_1, t_6) \mapsto (t_0, t_1, \lambda)$ with $\lambda=1/g$, we can plot $\mathcal{M}$ as in Figure $4$. It looks like a horn, with $S_1$ marked in red, and the level sets of $g=2$ and $g=3$ marked in green and blue, respectively. The figure seems to suggest that the moduli space $\mathcal{M}$ is a topological disk. It would be interesting to see whether this is indeed the case.
\begin{figure}[htbp]
\begin{minipage}{.49\linewidth}%\label{figa}
\centering
\includegraphics[width=0.25\textwidth]{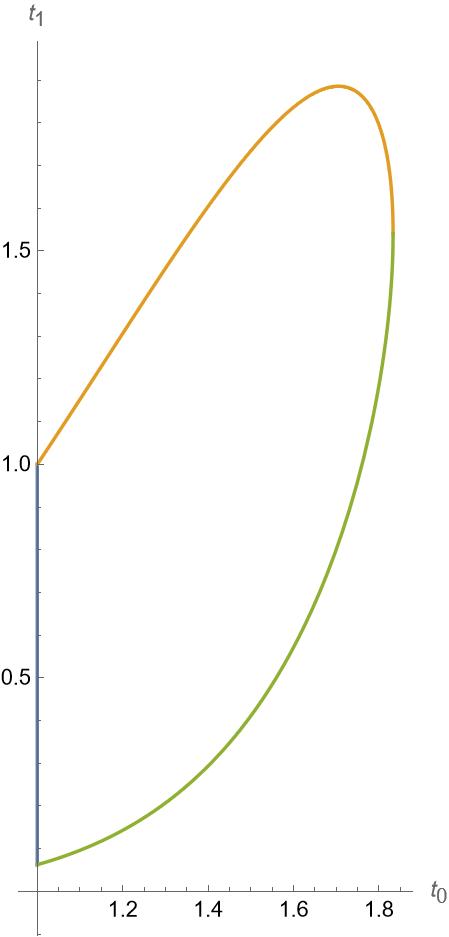}
\caption{The level set $S_1$}
%%\end{figure}
\end{minipage}
%\hfill
\begin{minipage}{.49\linewidth}%\label{fig:subfig:b}
\centering
%%\begin{figure}[htbp]
%%\centering
\includegraphics[width=0.50\textwidth]{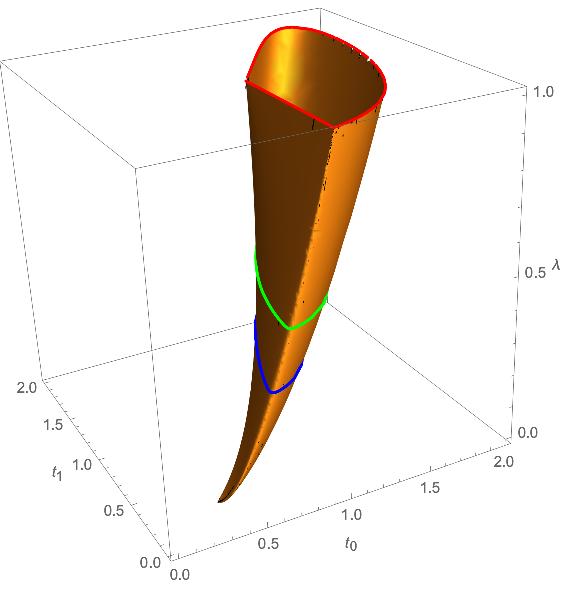}
\caption{The moduli space $\mathcal{M}$}
\end{minipage}
\end{figure}
\vskip -0.3cm
\begin{example}\label{example0} 
We point out that examples on the blue line segment coincide with the $1$-parameter family~\eqref{family33} in Remark~{\rm \ref{rk4}}. In fact, it follows from \eqref{t234} that
%{\small 
\begin{align*}
&t_0=1,~t_2=t_1,~t_3={5t_1^2}/{(4t_1+1)},~t_4=t_1^2,~t_5=t_1^3,~t_6=t_1^3,\\
&a_{00}=1,~a_{11}=1,~a_{22}=a_{33}={1}/{\sqrt{t_1}},~a_{44}={1}/{t_1}.
\end{align*}%}
Moreover, substituting all the data into \eqref{JP2}, we obtain that %see that this $1$-parameter family can be rewritten as {\small 
{\small\begin{equation}\label{family1}
\begin{pmatrix}
1 & 0 & -\sqrt{6}e^{\sqrt{-1}\theta_2}z^2 & -4\sqrt{\frac{t_3}{t_1}}e^{\sqrt{-1}\theta_3}z^3 & -3e^{\sqrt{-1}\theta_4}z^4\\
0 & 1 & \sqrt{6}e^{\sqrt{-1}\theta_1}z & 3e^{\sqrt{-1}\theta_2}z^2 & 2\frac{\sqrt{t_3}}{t_1}e^{\sqrt{-1}\theta_3}z^3\\
\end{pmatrix}. 
\end{equation}}
%which is in one-to-one correspondence with the $1$-parameter family~\eqref{family3} in Remark~{\rm \ref{rk4}}.

Set 
$t_1\triangleq {(5+3\cos\theta)}/{(20-12\cos\theta)}$, then $\cos\theta={(20t_1-5)}/{3(4t_1+1)}$. 
Then $\theta_0\triangleq 0, \;\theta_6\triangleq 0$, and
\[\aligned
&~\theta_1\triangleq \theta-\frac{\beta_0-\beta_1}{2},\;\theta_2\triangleq \theta ,\;\theta_3\triangleq \theta+\frac{\beta_0+\beta_1}{2}  ,\;\theta_4\triangleq \theta ,\;\theta_5\triangleq \theta-\frac{\beta_0-\beta_1}{2}  ,
\endaligned\]
satisfy \eqref{angle}, where % $\frac{\beta_0-\beta_1}{2} =(\beta_0-\beta_1)/2$ and 
$$\beta_0={\rm Arg}\left(\frac{3+e^{\sqrt{-1}\theta}}{\sqrt{10+6\cos\theta}}\right),~~~ \beta_1={\rm Arg}\left(\frac{3-e^{\sqrt{-1}\theta}}{\sqrt{10-6\cos\theta}}\right).$$  
%to satisfy\;
\iffalse
{\small $e^{\sqrt{-1}\beta_0}\triangleq {3+e^{\sqrt{-1}\theta}}/{\sqrt{10+6\cos\theta}},$}\;
{\small $e^{\sqrt{-1}\beta_1}\triangleq {3-e^{\sqrt{-1}\theta}}/{\sqrt{10-6\cos\theta}},$} \;
{\small $e^{\sqrt{-1}2\frac{\beta_0-\beta_1}{2} }\triangleq e^{\sqrt{-1}(\beta_0-\beta_1)},$\;
and set
 \[\algned
&\theta_0\triangleq 0,\;~\theta_1\triangleq -\frac{\beta_0-\beta_1}{2} -\theta ,\;\theta_2\triangleq -\theta ,\;\theta_3\triangleq \beta_1-\theta+\frac{\beta_0-\beta_1}{2},\\
&\theta_4\triangleq -\theta ,\;\theta_5\triangleq -\frac{\beta_0-\beta_1}{2} -\theta ,\;\theta_6\triangleq 0.
\endaligned\]
It is straightforward to verify that the  $t_l$ and $\theta_j$ as functions of $\theta$ satisfy \eqref{perturbed}, or equivalently, \eqref{angle}.
\fi
It is straightforward to verify that \eqref{family33} differs from \eqref{family1}  by multiplying its third and fourth columns by 
$e^{\sqrt{-1}(\beta_1-\beta_0+\theta )},$
 its last column by %%of \eqref{family3} by
 $e^{\sqrt{-1}(2(\beta_1-\beta_0)+\theta )},$
 and performing a reparameterization 
 $z\mapsto e^{\sqrt{-1}{(\beta_0-\beta_1)}/{2} }z.$ Note that $\pm \theta$ give the same $t_1$; they correspond to the two complex-conjugated solutions.
%%\begin{figure}[htbp]
%%\centering
%%\includegraphics[width=0.10\textwidth]{fixed-level-set.jpg}
%%\caption{Level set $S_1$}
%%\end{figure}
\end{example}

\begin{proposition}\label{nonhomogenous}
The second fundamental form $A$ of a constantly curved holomorphic $2$-sphere of degree $6$ belonging to the generally ramified family is not of constant norm, %non-homogeneous 
except for the standard Veronese curve \eqref{eq-standard}. %$PSL_2\cdot u^6$.
 \end{proposition}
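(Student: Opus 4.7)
The plan is to reduce parallelism of the second fundamental form to homogeneity of the image curve, and then to combine this with the representation-theoretic structure of $G(2,5)$ together with Corollary \ref{uniqueness} already established in the paper.

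First I would invoke the classical principle (going back to Ferus and Str\"ubing, and refined by Naitoh in the Hermitian symmetric setting) that any complete K\"ahler submanifold of a Hermitian symmetric space with parallel second fundamental form is extrinsically symmetric, and in particular homogeneous under a Lie subgroup of the full isometry group $PSU(5)$ of $G(2,5)$. For a $2$-sphere carrying a constantly curved (round) metric, the acting subgroup must contain a copy of $PSU(2)$ acting transitively on $\varphi(\mathbb{C}P^1)$, so the image is a single $PSU(2)$-orbit.

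Next I would pin down this $PSU(2)$-action by representation theory. A subgroup $PSU(2) \hookrightarrow PU(5)$ corresponds to a projective $5$-dimensional unitary representation of $SU(2)$; by Clebsch-Gordan the possibilities are $V_4$, $V_3\oplus V_0$, $V_2 \oplus V_1$, $V_2\oplus V_0^{\oplus 2}$, and similar further decompositions. The generic $6$-plane $\mathbf{L}$ spanned by $\varphi$ inside $\wedge^2 \mathbb{C}^5$ is $PSU(2)$-invariant, and since $\varphi$ is a rational normal curve of degree $6$ it must span an irreducible $7$-dimensional $PSU(2)$-summand of $\wedge^2 \mathbb{C}^5$. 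Running through the Clebsch-Gordan decomposition of $\wedge^2$ for each $5$-dimensional representation, only $\mathbb{C}^5 = V_4$ produces a $V_6$-summand, yielding $\wedge^2 V_4 = V_6 \oplus V_2$. Hence, after conjugation by an element of $U(5)$, $\mathbf{L}$ equals the standard $V_6$, and $\varphi$ lies in the standard Fano $3$-fold $\mathcal{H}_0^3$.

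By Corollary \ref{uniqueness}, the unique constantly curved holomorphic $2$-sphere of degree $6$ in $\mathcal{H}_0^3$ is the standard Veronese curve \eqref{eq-standard}, completing the implication. The main obstacle will be making the first step rigorous in this concrete framework: one may either directly cite Naitoh's classification of parallel K\"ahler submanifolds of Hermitian symmetric spaces, or circumvent the issue by computing $\nabla^{\perp} II$ along the diagonal-family parameterization \eqref{Ccurve} at the base point $z=0$ and checking that its vanishing forces the real numbers $X, Y, Z$ in \eqref{XYZt0t1t6} to all equal $2$, which by Remark \ref{RMK} already pins down the standard Veronese curve up to $U(5)$-equivalence.
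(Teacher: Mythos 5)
Your overall strategy is genuinely different from the paper's, and its skeleton (parallel $\Rightarrow$ extrinsically homogeneous $\Rightarrow$ an $SU(2)$-orbit $\Rightarrow$ representation theory forces $\mathbf{L}\cong V_6$ $\Rightarrow$ Corollary \ref{uniqueness}) could in principle be made to work, but as written it has two genuine gaps. The first is the one you flag yourself: everything rests on the claim that a compact K\"ahler submanifold with parallel second fundamental form in $G(2,5)$ must be an orbit of a subgroup of $PSU(5)$. This is a deep structure theorem (Str\"ubing's theorem for symmetric ambient spaces, plus a Levi-type argument to extract the $SU(2)$), and you neither state it precisely nor verify its hypotheses here; since every subsequent step is conditional on it, the proof is incomplete as written. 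A smaller, fixable omission: you assert that the invariant $6$-plane $\mathbf{L}$ must be an \emph{irreducible} $7$-dimensional summand; this needs an argument (for instance, if every irreducible constituent of $\mathbf{L}$ had highest weight at most $5$, an orbit curve in $\mathbb{P}(\mathbf{L})$ would be parametrized by polynomials of degree at most $5$ and so could not be a rational normal curve of degree $6$). By contrast, the paper proves the whole proposition in a few lines with no homogeneity theory at all: by the Gauss equation \eqref{eq-2nd}, $\|A\|^2$ is constant only if $\|\partial F/\partial z\wedge\partial F/\partial z\|^2/(1+|z|^2)^8$ is constant, and for an irreducible curve this function vanishes exactly at the ramified points, which exist and are finite in number by Corollary \ref{ramified pts coro}; reducible curves are the standard Veronese curve by the Fei--Jiao rigidity quoted in Section \ref{secramifiedpoints}.

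The second gap is a concrete error in your fallback route. You propose to show that vanishing of $\nabla^{\perp}II$ forces $X=Y=Z=2$ and then claim that, ``by Remark \ref{RMK}'', this pins down the standard Veronese curve up to $U(5)$-equivalence. Remark \ref{RMK} asserts the opposite: $X=Y=Z=2$ also holds for the curve \eqref{family3} lying in $A(\mathcal{H}^3_0)$ with $A=\diag\{1,1,4,4,16\}$, and for the example of Example \ref{eg-cusp}; the condition $X^2=Y^2=Z^2=4$, $XYZ=8$ characterizes the Fano $3$-folds containing a \emph{unique} constantly curved $2$-sphere (Theorem \ref{prop-exist}), not the standard curve. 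Indeed, the very proposition you are proving asserts that \eqref{family3} has non-parallel second fundamental form, so any computation concluding ``$\nabla^{\perp}II=0$ iff $X=Y=Z=2$'' would be inconsistent with the statement itself. If you want a computational proof, the vanishing of $\nabla^{\perp}II$ must be shown to force strictly more than the values of $X,Y,Z$ --- or you can simply use the ramification argument above.
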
 
 \begin{proof}
 %We need only show that $||A||^2$, the norm squared of the second fundamental form, is not constant.  
 It follows from the Gauss equation that %{\small 
\begin{equation}\label{eq-2nd}
||A||^2=20/3-||{\partial F}/{\partial z}\wedge {\partial F}/{\partial z} ||^2/(9(1+|z|^2)^8),
\end{equation}
%}
where $F$ is the Pl\"{u}cker embedding of the holomorphic $2$-sphere in $G(2,5)$ into $\mathbb{C}P^9$ (see %%the beginning of Section \ref{secramifiedpoints}
\cite[p.6, p.9]{He2022}  for details).  Note that $||{\partial F}/{\partial z}\wedge {\partial F}/{\partial z} ||^2$ only vanishes at ramified points.  Therefore, using Lemma~\ref{usefulAlgeLemma} %Corollary \ref{ramified pts coro} 
we can derive that the second term on the right-hand side of $||A||^2$ is not constant. %, since its numerator vanishes at ramified points. %denominator $1+|z|^2$ is irreducible %%in $\mathbb{C}[z,\bar{z}]$ 
%while its numerator is nontrivial in ${\mathbb C}[z,\overline{z}]$. %%$f_0,\ldots,f_4$ in \eqref{fwfcase1} cannot vanish simultaneously.
 \end{proof}

\begin{example}\label{eg-exact} 
On the level set $S_1$, choose $t_0=11/6$. Then we can solve for $t_1=1331/864$. It gives an exact solution to \eqref{eq-alge},
{\small $$t_0=\frac{11}{6},~~~t_1=\frac{131}{864},~~~t_2=\frac{14641}{7776},~~~t_3=\frac{73205}{41472},~~~t_4=\frac{1771561}{1119744},~~~t_5=t_6=\frac{19487171}{17915904},$$}

It is checked that $X=Y=5\sqrt{5}/\sqrt{33}$ and $Z=2.$
from which the angles $\{\theta_1, \cdots, \theta_5\}$ can be solved. 
\end{example}

\begin{example}\label{eg-exact1} 
On the level set $S_1$, choose $t_0=\left(2 \sqrt{79}+20\right)/21$. Then we can solve for  $t_1=\left(2 \sqrt{79}+20\right)/21$. It gives an exact solution to \eqref{eq-alge}, 
$$\aligned &t_0=t_1=t_5=t_6= \left(2 \sqrt{79}+20\right)/21,~~~t_2=t_4=\left(23 \sqrt{79}+209\right)/189,\\&t_3= (9 + \sqrt{79})/8,\endaligned$$  
from which the angles $\{\theta_1, \cdots, \theta_5\}$ can be solved. Note that  for this example, the diagonal matrix $A$ has two distinct eigenvalues
$a_{00}=a_{44}\neq a_{11}=a_{22}=a_{33}.$
\end{example}
  
\begin{example}\label{eg-cusp} Start with the equations 
$P\triangleq X^2-4=0,\,Q\triangleq Y^2-4=0,\,R\triangleq Z^2-4=0,$ 
with $X,Y,Z$ given in \eqref{XYZt0t1t6} to express them in terms of the variables $t_0, t_1,g, $ with $t_6=t_1^3/(t_0^2g)$ by \eqref{sigmag}. Continue to compute the derived resultants of the refined numerators $P',Q',R'$ of $P,Q,R$,  in terms of $t_0, t_1, g$, after removing powers of $g-1$ and those single-variable factors without positive solutions by, e.g., Sturm's algorithm for counting the exact number of distinct positive roots of a real polynomial, while setting aside possible candidate polynomials before proceeding with the next level of resultant computation{\emph ;} 
along the way, we heed the constraint that $(gt_0,gt_1,1/g)$ is a set of solution if $(t_0,t_1,g)$ is, by Proposition {\em \ref{PROP7.1}}, to further narrow down the candidates. 
We end up with the exact equations for possible $t_0, t_1, g:$

%%\begin{tiny}\begin{equation}\label{exact}
\begin{tiny}$$
\aligned 
&p\triangleq 3004245721g^6 - 139634316726g^5 - 67838574585g^4 - 318786958820g^3 - 67838574585g^2 \\&- 139634316726g + 3004245721=0,\\
&q\triangleq 2537649t_0^6 - 40347234t_0^5 + 36454860t_0^4 - 19711080t_0^3 + 26076060t_0^2 - 17915544t_0 + 3452164=0,\\
&r\triangleq 6861904453295341780216896t_1^6 - 57789440847499427495680896t_1^5 - 3541432129528999644182160t_1^4\\& + 2695787548715827169923680t_1^3 
- 242591843875043061525060t_1^2 - 261056339362401426814176t_1 \\
&+ 53689575410338079139841=0.\endaligned$$\end{tiny}
%%&3496660660534477268390232249t_1^6 - 571240029103084624711190627844t_1^5 - 180334371204315235320396777540t_1^4 \\
%%&- 20690519947224917218021478880t_1^3 - 989489005236297587491889040t_1^2 - 17198222315646241060398144t_1 \\
%%&+ 577631510537395829824=0,
%%\endaligned
%%$$\end{tiny}
%%\end{equation}\end{tiny}

Compute the Gr\"{o}bner basis of the ideal $(P',Q,'R',p,q,r)$ to obtain the basis consisting of six elements of which we only record the two essential ones, %%$A,B,C,E,F,G$, 
\begin{tiny}$$
\aligned
%%&A \triangleq  46025066872492081067756g^2 + 4852886963441949937895445gt_0 - 11136245043517510673701080gt_1 \\&- 1669511136582870636511672g - 178840494500416295447115t_0 + 553492739894709138991560t_1\\& - 1516395520238245635400594=0,\\
%%&B \triangleq  -3505185607812573411262775046179165g^2 - 310397737581594038715309151616344122gt_1\\& + 84177653707712908172851478293254948t_1^2+ 227821426814783887832218950013251307g \\&+ 701722134896314276352682231147834210t_0 - 1921841171858818894254633052580523474t_1\\& - 212208017675696524298977688364142379=0,\\
%%&C \triangleq  -1889840533719569527975508g^2 - 138366509417798966097762510gt_1 \\&+ 14558660890325849813686335t_0^2+ 117495249698005458870457936g + 366332774802905466030375990t_0\\& - 1016419035967548320357952030t_1 \\&- 109045522401694057601295488=0,\\
&E \triangleq  30407219135534569920865279281g^2t_1 - 5684396631350441922486404084g^2 \\&+ 4826381508202691775218328738gt_1 + 8781109390742136392820835978g \\&+ 22087970177286319548246901485t_0 - 37952752504503427337193407559t_1 \\&- 10129670167010754418270796864=0,\\
%%&F \triangleq  -44440502586557410661318415833g^2 - 3705254673218497521344738988000gt_1\\&+ 608144382710691398417305585620t_0t_1 + 2846929753851981521603535157426g \\&+ 8895272990921454238235126362440t_0 - 24427152449198671502238278481360t_1\\& - 2678986150939933082063002417193=0,\\
&G \triangleq  323983664320381367395969030814241g^3 - 15097919249633508113716536736052777g^2 \\&+ 24001947052912436490532391777190000gt_1 - 10297270579570244241163795555112489g \\&- 21160216103727154670480065729425120t_0 + 38155570002907589892718590589124280t_1\\& - 10753529104240427995602453394128335=0.
\endaligned
$$\end{tiny}

We obtain $t_0\triangleq R/S$ and $t_1=T/U$ in closed form of $g$, where
\begin{tiny}$$
\aligned
&R\triangleq 323983664320381367395969030814241g^5 - 15046494988853004912329176221825959g^4\\& - 8611085577295995251867740593198034g^3+ 6658017307603866925677723269688366g^2 \\&+ 8122830950478969874129540484608001g + 26132918116090821757236925434099385,\\
&S\triangleq 21160216103727154670480065729425120g^2 + 20793797801629220801560324794395760g \\&+ 1305303435283084266467628002760120,\\
&T\triangleq  -423618308217230277983078980100353g^3 + 26861312395386909671099284789417865g^2\\& + 2464682459146076205358051730246729g + 26749087059945119323559494796984559,\\     
&U\triangleq 38088388986708878406864118312965216g^2 + 37428836042932597442808584629912368g \\&
+ 2349546183509551679641730404968216.
\endaligned
$$\end{tiny}

It is then checked that all the remaining equations in the basis are compatible with $p=0$.
Now, $p=0$ has two positive real roots reciprocal to each other as the coefficients of $p$ are symmetric, which are approximately
$g\sim 0.0212731522$ and $47.0076078738$ {\em (}Since all the above polynomial equations are exact, the listed numerical values are accurate up to the last digit, checked by the intermediate value theorem, for instance.{\em )} We then derive the corresponding values for $t_0$ and $t_1$ through $R, S, T, U$ to yield
{\small}\begin{equation}\nonumber
\aligned
(t_0,t_1,g)&\sim (0.3184944933,0.1803379951,47.0076078738),\;\text{or}\\
&\sim (14.9716642533,8.4772577609,0.0212731522),
\endaligned
\end{equation}
accurate up to the last digit, in accord with Proposition {\em \ref{PROP7.1}}{\emph ;} both give $X=Y=Z=2$.  %% and one triple is identified with the other by the involution $\sigma$ in \eqref{sigmag}. 
The second set gives the pointed end of the horn in Figure $2$. 

This is the third and the last example, aside from the two given in Remark {\em \ref{RMK}} with $g=1$, for which there is only one constantly curved $2$-sphere belonging to the generally ramified family in the corresponding Fano $3$-fold $A({\mathcal H}_0^3)$, where $A$ is computed by \eqref{eq-t01234}. %%to be expressed in terms of $g$ by $R,S,T,U$.
%%in the present case,
%%{\small $$)\sim (1,\;\;0.2584431111,\;\;0.3434569475,\;\;0.3434569475,\;\;0.2575909584)$$} is computed by \eqref{eq-t01234}; here, we use the smaller $g$ to represent the horn's pointed head.
\end{example}

\begin{example} Set $t_1\triangleq t_0^2/6$ in $F$ given in \eqref{eq-alge} and factor out positive terms to yield
\begin{tiny}$$
\aligned
&f(g,t_0)\triangleq 
190512g^4t_0^6 + 20736g^4t_0^5 + 95256g^3t_0^6 + 27g^4t_0^4 - 205416g^3t_0^5 - 401301g^3t_0^4\\ 
&- 104328g^2t_0^5- 6264g^3t_0^3 - 59319g^2t_0^4 + 168282g^2t_0^3 + 32913gt_0^4 + 202140g^2t_0^2 + 35388gt_0^3\\
& + 6720gt_0^2
 + 2034t_0^3 + 19504gt_0 + 2460t_02 + 688t_0 - 32=0.
\endaligned
$$\end{tiny}

It defines a plane algebraic curve $C$. We claim that $C^*\subset C$ falling in the rectangle ${\mathcal R}$ given by $8/15 \leq t_0\leq 5,\, 1475/10000 \leq g\leq 3,$ is a smooth, connected closed curve contained in $S$, the double of the moduli space ${\mathcal M}$. 

Firstly, observe that $(t_0,g)=(1,1)$ solves $f=0$ so that that $C^*$ is not empty. It is also directly checked that $\frac{\partial f}{\partial t_0}/\frac{\partial f}{\partial g}=2$ at $(t_0,g)=(1,1)$, so that the implicit function theorem implies that $f=0$ is locally a curve $(t_0,g(t_0))$ around $(t_0,g)=(1,1)$ with negative slope. 

%%In the following, we set the number of significant decimal digits to be $20$, verified by Sturm's algorithm and the intermediate value theorem, for all real roots of the engaged exact polynomial equations out of resultant computations, and truncate them to the first ten digits to warrant precision. 

Setting $t_0\triangleq 8/15$ or $5$, and $g\triangleq 1475/10000$ or $3$, respectively, we solve $f(g,t_0)=0$ to 
attain {\rm (}accurate up to the last digit for the exact polynomials{\rm )}
{\small$$
\aligned
&\text{for}\;\,t_0=8/15,\; \nexists\;\text{real}\; g,\;\text{while for}\; t_0= 5,\, g\sim -0.4687373438 ,\;\text{or}\;  -0.0109931977;\\
&\text{for}\;\,g=1475/10000,\;\,t_0\sim 0.0088038166,\; \text{while for}\;\, g=3,\\
&t_0\sim -0.5591240674, -0.4272041173, -0.0337884110, 0.0005317397.
\endaligned 
$$}

This means that the set $C^*$ never leaves the rectangle ${\mathcal R}$, so that by analytic continuation of an algebraic curve, $C^*$ consists of closed curves and, a priori, a few isolated points. The latter can be ruled out since these finitely many points must satisfy 
$f={\partial f}/{\partial t_0}={\partial f}/{\partial g}=0$ and the Gr\"{o}bner basis associated with the ideal $(f,{\partial f}/{\partial t_0},{\partial f}/{\partial g})$ is $\{g-1,3t_0+2\}$ whose zero locus $(t_0,g)=(-2/3,1)$ does not fall in the domain ${\mathcal R}$.
%%resultant calculation of the two equations produce two single-variable polynomials in $t_0$ and $g$, respectively, whose possible roots are
%%\begin{equation}\label{none}
%%\aligned
%%&t_0\sim0.5774990952,\; 1.1042080406,\;\text{or}\;\, 2.7004220933;\\
%%&g\sim0.2492816809,\; 0.3254047116,\;\text{or}\;\, 0.4352506873,
%%\endaligned
%%\end{equation}{\em )} 
As a result, it also implies that the finitely many closed curves constituting $C^*$ are smooth and disconnected in ${\mathcal R}$.

By calculating the resultants of $f={\partial f}/{\partial t_0}=0$ against $g$ and $t_0$ and solving for the roots, we verify that none of the possible pairs of $(t_0,g)$ satisfy \eqref{DH} 
{\em (}see the remark below for the engaged computational error analysis for rational functions{\rm )}, except possibly for two points $(t_0,g)$ approximately at
\begin{equation}\label{none}
(0.6547026351, 2.9099350324),\;\;\text{or}\;\; \;(4.5794327836, 0.1475263321),
\end{equation}
accurate up to the last digit. Since there exist at least two such points, this proves that $C^*$ is only tangent to the horizontal lines, $g=\text{constants}$, precisely at the two points; likewise, this is also true for the vertical line test. In particular, $C^*$ has only one connected component as, otherwise, we would have more than two points tangent to horizontal or vertical lines. 

We calculate the resultants of $f$ and the numerator of $R\triangleq Z^2-4$ against $g$ and $t_0$ and solve for the roots, to confirm that the only point of intersection of the curve $C^*$ and the boundary of $Z^2\leq 4$ occurs with tangency at 
$$
(t_0 ,g)\sim (1.5271772661,0.4663765333), 
$$
with the corresponding $X=Y\sim 1.8718004195$ and $Z=2$. It follows that $C^*$ lies completely in $Z^2\leq 4$ since $(t_0,g)=(1,1)$ satisfies $Z^2<4$. In particular, the three constraints in \eqref{inequality} are satisfied by Remark {\rm \ref{three inequality not indep}}.

Figure $5$ depicts the curve $C^*$ {\rm (}in red{\rm )} in $S$. Since it extends into the region with $g>1$, we apply the involution $\sigma$ to flip it back into ${\mathcal M}$ with $g\leq 1$. Figure $6$ shows the resulting self-crossing, flipped $C^*$ {\em (}in red{\rm )}, %%where the dotted curves define $Z^2=4$, and the flipped $C^*$  
which opens at $g=1$ for which $t_0=1$ or $t_0\sim 1.4542230103$. The region bounded by the three constraints is colored yellow.

\begin{figure}[htbp]
\begin{minipage}{.49\linewidth}%\label{fig:subfig:b}
\centering
\includegraphics[width=0.40\textwidth]{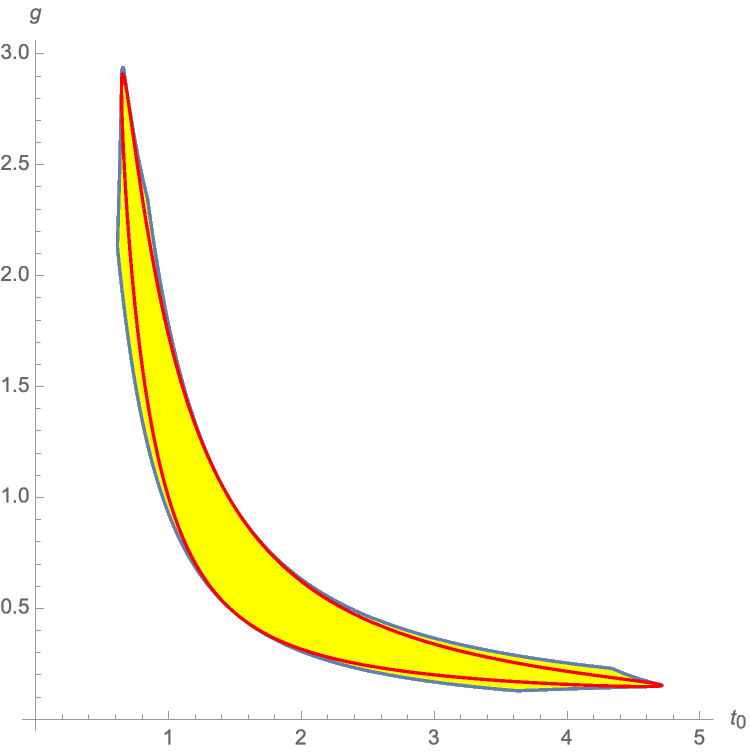}
\caption{The curve $C^*$ in $S$}
%%\end{figure}
\end{minipage}
%%\hfill
\begin{minipage}{.49\linewidth}%\label{fig:subfig:b}
\centering
%%\begin{figure}[htbp]
%%\centering
\includegraphics[width=0.40\textwidth]{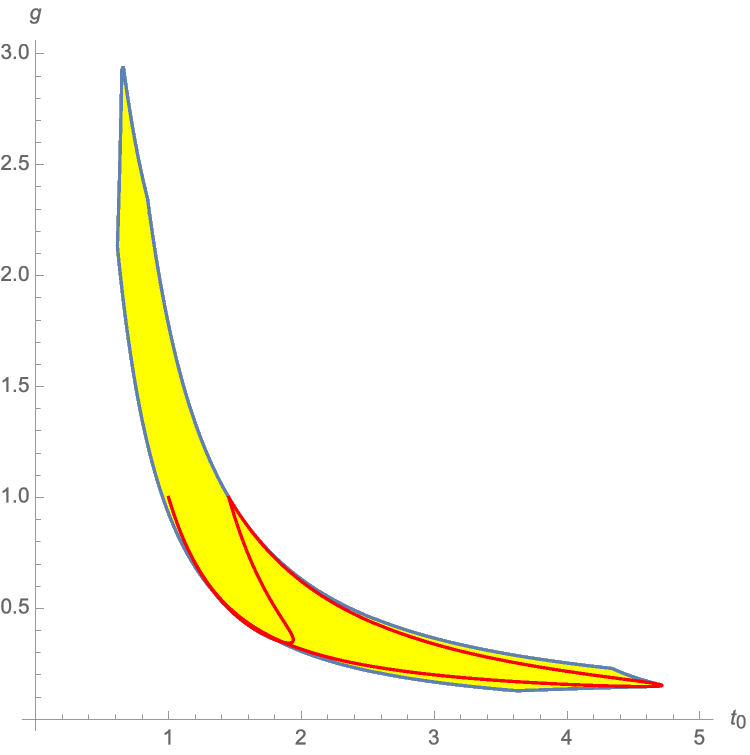}
\caption{Folded $C^*$ in $\mathcal{M}$}
\end{minipage}
\end{figure}

%%The $3$-dimensional version with $t_1=t_0^2/6$ is the curve viewed in the horn in Figure 2.

\end{example}

\begin{remark} %%Care must be taken when verifying that the obtained approximate values of $t_0, t_1, g$ do not satisfy the cubic equation \eqref{DH} that defines the moduli space, which is a rational function in $(t_0,t_1,g)$, even
%%when we are able to solve exact polynomials with integer coefficients as above to numerically approximate $t_0, t_1, g$ as precisely as we prefer.

Let $f(x,y)=\sum_{m,n=0}^{M,N} a_{mn}\, x^my^n$ and $l(x,y)\triangleq \sum_{i,j=0}^{I,J} b_{ij}\, x^iy^j$ over a rectangle ${\mathcal R}:[a,b]\times [c,d]$ with $a, c>0$. Assume $l(x,y)>0$ and define the positive function $||f||(x,y)\triangleq \sum_{m,n=0}^{M,N} |a_{mn}| \,x^my^n$ over ${\mathcal R}$. Given $(x_0,y_0), (x,y) \in {\mathcal R}$ with $0<|x_0-x|, |y_0-y|<h$, where $h>0$ is so small that 
$nh<<1$ for $n=M,N,I,$ or $J$,  then $p(x,y)\triangleq f(x,y)/l(x,y)$ satisfies the error estimate
\begin{equation}\label{error}
|p(x_0,y_0)-p(x,y)|\leq (C(M,N) + C(I,J)) \sup_{(x,y)\in{\mathcal R}}(||f||(x,y)/l(x,y)),
\end{equation}
where, for $n\in{\mathbb N}$ with $nh<1$, we define
$\gamma_n\triangleq nh/(1-nh)$, and 
$$
C(p,q)\triangleq (e^{1/a}-1)\gamma_p +(e^{1/c}-1)\gamma_q+(e^{1/a}-1)(e^{1/c}-1)\gamma_p\gamma_q
$$
for $p, q\in{\mathbb N}$. {\em (}We leave it to the reader to verify.{\em )}

In Example $5$, $x\triangleq g$ and $y\triangleq t_0$, ${\mathcal R}$ is the rectangle $[1475/10000, 3]\times [8/15, 5]$, and $f(g,t_0)$ is given in Example $5$. %%with $a=1475/10000$ and $c=8/15$. In \eqref{DH}, 
Write, for $H$ in \eqref{DH}, 
%%$H\triangleq f(g,t_0)/l(g,t_0), t_1=t_0^2/6,$
%%where $f(g,t_0)$ is given in Example $5$ 
$$
H=f(g,t_0)/l(g,t_0),\quad l(g,t_0)\triangleq 405000\,t_0^3\,g^2\,(3t_0 + 2)\,(3gt_0 + 2)>0,
$$

Since $M=4, N=6, A=3,$ and $B=5$, if we take $h\triangleq 10^{-20}$, the error estimate \eqref{error} gives that $C(M,N)+C(A,B)$ is in the magnitude of $10^{-17}$, and an elementary mini-max estimate derives $||f||(x,y)/g(x,y)\leq 1$ for all $(x,y)\in{\mathcal R}$, so that the error is in the magnitude of $10^{-17}$. Consequently, all the engaged computations for the data satisfying $H\neq 0$ to obtain, e.g., \eqref{none} are accurate up to the tenth decimal place if we set the last significant decimal place to be the twentieth; %% and truncate the obtained numerical values at the tenth, where 
all the undesired values above, in fact, are such that their third decimal digits are nonzero to satisfy $H\neq 0$.

\end{remark}

\vspace{2mm}

\noindent {\small Department of Mathematics, Washington University, St. Louis, MO 63130\\
School of Mathematical Sciences, Beihang University, Beijing 102206, China\\
School of Mathematical Sciences and LPMC, Nankai University, Tianjin 300071, China}

\begin{tiny}\noindent E-mail: chi@wustl.edu;~~~\phantom{,,}xiezhenxiao@buaa.edu.cn;~~~\phantom{,,}xuyan2014@mails.ucas.ac.cn.\end{tiny}

\end{document}